\let\oldmarginpar\marginpar
\renewcommand\marginpar[1]{\-\oldmarginpar[\raggedleft\tiny #1]%
{\raggedright\tiny #1}}
\newcommand{\A}{\mathcal{A}}
\newcommand{\BB}{\mathcal{B}}
\newcommand{\CC}{\mathcal{C}}
\newcommand{\FF}{\mathcal{F}}
\newcommand{\RR}{\mathcal{R}}
\newcommand{\SSS}{\mathcal{S}}
\newcommand{\XX}{\mathcal{X}}
\newcommand{\R}{\mathbb{R}}
\newcommand{\lleq}{\preccurlyeq}
\newcommand{\lle}{\trianglelefteq}
\newcommand{\RRR}{R^{>0}\times R^{>0}}
\newcommand{\dom}{ (0,\infty) }
\newcommand{\fS}[2]{(\dagger)_{(#1,#2)}}
\theoremstyle{definition}
\newtheorem{definition}{Definition}[]
\newtheorem{remark}[definition]{Remark}
\newtheorem{example}[definition]{Example}
\theoremstyle{plain}
\newtheorem{theorem}[definition]{Theorem}
\newtheorem{lemma}[definition]{Lemma}
\newtheorem{proposition}[definition]{Proposition}
\newtheorem{corollary}[definition]{Corollary}
\newtheorem{fact}[definition]{Fact}
\title{Directed sets and topological spaces definable in o-minimal structures.}
\author{
Pablo And\'ujar Guerrero\footnote{Department of Mathematics, Purdue University, 150 N. University Street, West Lafayette, IN 47907-2067, U.S.A. \emph{E-mail addresses: pa377@cantab.net (And\' ujar Guerrero), margaret.thomas@wolfson.oxon.org (Thomas)}}
\and 
Margaret E. M. Thomas \footnotemark[1] 
\and 
Erik Walsberg\footnote{Department of Mathematics, Statistics, and Computer Science,
Department of Mathematics, University of California, Irvine, 340 Rowland Hall (Bldg.\# 400),
Irvine, CA 92697-3875, U.S.A. \emph{E-mail address: ewalsber@uci.edu}}
}
\date{}
\begin{document}
\maketitle 

\noindent
{\small \emph{2010 Mathematics Subject Classification.} 03C64 (Primary), 54A20, 54A05, 54D30 (Secondary). \\
\emph{Key words.} o-minimality, directed sets, definable topological spaces.}

\begin{abstract}
We study directed sets definable in o-minimal structures, showing that in expansions of ordered fields these admit cofinal definable curves, as well as a suitable analogue in expansions of ordered groups, and furthermore that no analogue holds in full generality. We use the theory of tame pairs to extend the results in the field case to definable families of sets with the finite intersection property. We then apply our results to the study of definable topologies. We prove that all definable topological spaces display properties akin to first countability, and give several characterizations of a notion of definable compactness due to Peterzil and Steinhorn~\cite{pet_stein_99} generalized to this setting.

\end{abstract}

\section{Introduction}

The study of objects definable in o-minimal structures is motivated by the notion that o-minimality provides a rich but \textquotedblleft tame" setting for the theories of said objects.
In this paper we study directed sets definable in o-minimal structures, focusing on expansions of groups and fields. By \textquotedblleft directed set" we mean a preordered set in which every finite subset has a lower (if downward directed) or upper (if upward directed) bound. Our main result (Theorem \ref{them_1st_countability}) establishes the existence of certain definable cofinal maps into these sets. For papers treating similar objects, namely orders and partial orders, the reader may consult~\cite{ram13} and~\cite{ram_stein_14}, in which the authors prove, respectively, that definable orders in o-minimal expansions of groups are lexicographic orders (up to definable order-isomorphism), and that definable partial orders in o-minimal structures are extendable to definable total orders. We do not however use these results in this paper.


The motivation for this paper is not the study of definable directed sets per se, but rather their applications to the theory of definable topological spaces (see Definition~\ref{def_definable_top_spaces}). Our study of such spaces in the o-minimal setting arose from earlier work of the second and third authors on definable function spaces~\cite{thomas12} and definable metric spaces~\cite{walsberg15} respectively, and we give a detailed study of one-dimensional definable topological spaces in~\cite{atw2}. Definable topological spaces have also been studied independently by Johnson~\cite{johnson14}, Fornasiero~\cite{fornasiero}, and Peterzil and Rosel~\cite{pet_rosel_18}. The main result here regarding definable directed sets and its corollaries allow us to derive properties of definable topological spaces that can be interpreted as first countability and compactness in a first-order model theoretic context. 


In Section~\ref{section_definitions} we introduce necessary definitions and conventions. In Section~\ref{section_main_result} we prove the main result on definable directed sets (Theorem \ref{them_1st_countability}) and show that it does not hold in all o-minimal structures. 
In Section~\ref{section_after_main_result} we strengthen the main result in the case where the underlying structure expands an ordered field. In Section~\ref{section_tame_pairs} we apply the theory of tame pairs initiated in ~\cite{mark_stein_94} to make some additional remarks and frame our work in the context of types. We also strengthen our earlier results in the case where the underlying structure expands an archimedean field. In Section~\ref{section_def_top_spaces} we use the results in previous sections to describe definable bases of neighborhoods of points in a definable topological space (definable first countability) and derive some consequences of this, in particular showing that, whenever the underlying o-minimal structure expands an ordered field, definable topological spaces admit definable curve selection (without the condition that the curves be continuous). Finally, in Section~\ref{section_compactness} we consider the standard notion of definable compactness in the o-minimal setting due to Peterzil and Steinhorn~\cite{pet_stein_99} and show how it relates to other properties that could also be understood to capture compactness in the definable context. 
We conclude by showing that, whenever the underlying o-minimal structure expands the field of reals, a definable topological space is definably compact if and only if it is compact. 

\paragraph{Acknowledgements}
The first and second authors were supported by German Research Council (DFG) Grant TH 1781/2-1; the Zukunftskolleg, Universit\"at Konstanz; and the Fields Institute for Research in Mathematical Sciences, Toronto, Canada (during the Thematic Program on \textquotedblleft Unlikely Intersections, Heights and Efficient Congruencing"). Additionally the first author was supported by the Canada Natural Sciences and Engineering Research Council (NSERC) Discovery Grant RGPIN-06555-2018; and the second author by the Ontario Baden--W\"urttemberg Foundation, and the Canada Natural Sciences and Engineering Research Council (NSERC) Discovery Grant RGPIN 261961.

The third author was partially supported by the European Research Council under the European Unions Seventh Framework Programme (FP7/2007-2013) / ERC Grant agreement no. 291111/ MODAG.

We would like to thank the referee for careful reading of the paper and detailed feedback which enhanced the presentation.

\section{Definitions}\label{section_definitions}

Throughout, unless otherwise indicated, $\RR=(R,0,+,<,\ldots)$ denotes an o-minimal expansion of an ordered group, and by \textquotedblleft definable" we mean \textquotedblleft $\RR$-definable, possibly with parameters from $R$".
In addition, $m,n,N$ denote natural numbers and $k,l$ denote integers.
By \textquotedblleft interval" we mean an interval with distinct endpoints in $R\cup \{\pm \infty\}$ (i.e. definable and not a singleton). The euclidean topology on $R^n$ is the product topology given the order topology on $R$. 
We let $\|\cdot\|:R^n\rightarrow R$ be the usual $l_\infty$ norm
$$ \| (x_1,\ldots,x_n)\| = \max \{ |x_1|, \ldots, |x_n|\}.$$
For any $x\in R^n$ and $\varepsilon>0$, set 
$$B(x,\varepsilon):=\{y\in R^n : \|x-y\|<\varepsilon\}$$
 and 
 $$\overline{B(x,\varepsilon)}:=\{y\in R^n : \|x-y\|\leq\varepsilon\}$$ 
 to be respectively the open and closed ball of center $x$ and radius $\varepsilon$ . 
 A set $S\subseteq R^n$ is bounded if it is bounded in the euclidean topology, i.e. if it lies inside some ball. When we write that $\SSS=\{S_u : u\in\Omega\}$ is a definable family of sets we are implicitly associating $\SSS$ with the definable set $\Omega$ and some formula $\phi(u,v)$ such that, for every $u\in \Omega$, $S_u = \{ v : \RR\models \phi(u,v)\}$.   

Recall that a preorder is a transitive and reflexive binary relation, and that a downward directed set $(\Omega,\lleq)$ is a nonempty set $\Omega$ with a preorder $\lleq$ satisfying that, for every finite subset $\Omega'\subseteq \Omega$, there exists $v\in \Omega$ such that $v \lleq u$ for all $u\in \Omega'$. The dual notion of upward directed set is defined analogously. 

\begin{definition}
A \emph{definable preordered set} is a definable set $\Omega\subseteq R^n$ together with a definable preorder $\lleq$ on $\Omega$. A \emph{definable downward directed set} is a definable preordered set $(\Omega, \lleq)$ that is downward directed.   
\end{definition}

Recall that a subset $S$ of a preordered set $(\Omega, \lleq)$ is cofinal if, for every $u\in \Omega$, there exists $v\in S$ such that $u\lleq v$. We refer to this property as upward cofinality, and work mostly with the dual notion of coinitiality, which we in turn refer to as downward cofinality. The reason for this approach is that it seems more natural for the later application of our results to definable topologies.

\begin{definition}
Let $(\Omega', \lleq')$ and $(\Omega, \lleq)$ be preordered sets. Given $S\subseteq \Omega$, a map $\gamma:\Omega'\rightarrow \Omega$ is \emph{downward cofinal for $S$ (with respect to $\lleq'$ and $\lleq$)} if, for every $u\in S$, there exists $v=v(u)\in \Omega'$ such that $w\lleq' v$ implies $\gamma(w)\lleq u$. Equivalently, we say that $\gamma:(\Omega',\lleq')\rightarrow (\Omega,\lleq)$ is \emph{downward cofinal for $S$}. We write this as $\gamma \lleq' \lleq S$ when there is no room for confusion. We say that $\gamma$ is \emph{downward cofinal} if it is downward cofinal for $\Omega$.
   
We say that a map $\gamma:\dom \rightarrow \Omega$ is a \emph{curve in $\Omega$}, and that it is \emph{downward cofinal for $S\subseteq \Omega$} if $\gamma:(\dom,\leq)\rightarrow (\Omega, \lleq)$ is. If there exists such a map when $S=\Omega$, then we may say that $(\Omega, \lleq)$ \emph{admits a downward cofinal curve}. 
\end{definition}

The dual notion of definable downward directed set is that of \emph{definable upward directed set}. 
In other words, if $(\Omega, \lleq)$ is a preordered set and $\lleq^*$ is the dual preorder of $\lleq$, then $(\Omega, \lleq)$ is a definable upward directed set whenever $(\Omega, \lleq^*)$ is a definable downward directed set. 
Moreover, given a preordered set $(\Omega',\lleq')$, a map $\gamma:(\Omega',\lleq')\rightarrow (\Omega, \lleq)$ is \emph{upward cofinal for $S\subseteq \Omega$} if it is downward cofinal for $S$ with respect to $\lleq'$ and $\lleq^*$. 

Note that the image of a downward (respectively upward) cofinal map is always downward (respectively upward) cofinal.

In o-minimality, curves are classically defined to be any map into $R^n$ with interval domain~\cite{dries98}. Our definition of curve is justified because, in our setting (that $\RR$ expands an ordered group), this notion is equivalent for all practical purposes to the weaker one in~\cite{dries98}. Additionally our notion of downward/upward cofinal curve focuses on the behaviour of curves as $t\rightarrow 0^+$. Cofinal curves however will only be relevant in the setting where $\RR$ expands an ordered field, where once again this notion is strong enough for all purposes. 
  

\begin{remark}\label{remark_sets_order}

\begin{enumerate}[(i)]
\item \label{itm:remark_i}
Let $\SSS=\{S_u : u\in \Omega\}$ be a definable family of sets. Set inclusion induces a definable preorder $\lleq_{\SSS}$ on $\Omega$ given by \mbox{$u\lleq_{\SSS} v \Leftrightarrow S_u \subseteq S_v$}. 

\item \label{itm:remark_ii} Conversely, let $(\Omega, \lleq)$ be a definable preordered set. Consider the definable family of nonempty sets $\{S_u : u \in \Omega\}$, where $S_u=\{v\in \Omega : v\lleq u\}$ for every $u\in \Omega$. Then, for every $u,w\in\Omega$, $u\lleq w$ if and only if $S_u\subseteq S_w$. 
\end{enumerate}
\end{remark}
 
Note that Remark~\ref{remark_sets_order} remains true if we drop the word \textquotedblleft definable" from the statements.  
Motivated by Remark~\ref{remark_sets_order}(\ref{itm:remark_i}) we introduce the following definition.

\begin{definition}
A family of sets $\SSS=\{S_u : u\in\Omega\}$ is \emph{downward} (respectively \emph{upward}) \emph{directed} if the preorder $\lleq_\SSS$ on $\Omega$ induced by set inclusion in $\SSS$ forms a downward (respectively upward) directed set. For convenience we also ask that $\SSS$ does not contain the empty set.
\end{definition}

In other words, $\SSS$ is downward (respectively upward) directed if and only if $\emptyset \notin \SSS$ and, for every finite $\FF\subseteq \SSS$, there exists $u=u(\FF)\in \Omega$ satisfying $S_u \subseteq S$ (respectively $S \subseteq S_u$) for every $S\in \FF$. 

\begin{example}\label{ex_FIP}
Let $(\Omega, \lleq)$ be a downward (respectively upward) directed set. The induced family $\SSS=\{S_u : u\in\Omega\}$ described in Remark~\ref{remark_sets_order}(\ref{itm:remark_ii}) is a downward (respectively upward) directed family.
\end{example}

\begin{definition}
Let $(\Omega,\lleq)$ be a definable preordered set and let $f:\Omega\rightarrow R^m$ be a definable injective map. We define the \emph{push-forward of $(\Omega, \lleq)$ by $f$} to be the definable preordered set $(f(\Omega),\lleq_f)$ that satisfies: $f(x)\lleq_f f(y)$ if and only if $x\lleq y$, for all $x,y\in \Omega$. 
Thus $(f(\Omega),\lleq_f)$ is the unique definable preordered set such that  \mbox{$f:(\Omega, \lleq)\rightarrow (f(\Omega), \lleq_f)$} is a preorder-isomorphism. 

Let $\{S_u : u\in \Omega\}$ be a definable family of sets. Based on the correspondence between definable families of sets and definable preordered sets given by Remark~\ref{remark_sets_order} we also define the \emph{push-forward of $\{S_u : u\in\Omega\}$ by $f$} to be the reindexing of said family given by $\{S_{f^{-1}(u)} : u\in f(\Omega)\}$. 
We abuse notation and write $S_u$ instead of $S_{f^{-1}(u)}$ when it is clear that $u\in f(\Omega)$.
\end{definition}


\section{Main result}\label{section_main_result}

\begin{definition}
We equip $\RRR$ with the definable preorder given by
\[
(s,t)\lle (s',t') \Leftrightarrow s \leq s' \text{ and } t \geq t'.
\]
Note that $(\RRR, \lle)$ is a definable downward directed set.
\end{definition}


We are now ready to state the Main Theorem of this paper.

\begin{theorem}\label{them_1st_countability}
Let $(\Omega, \lleq)$ be a definable downward (respectively upward) directed set. There exists a definable downward (respectively upward) cofinal map \mbox{$\gamma:(\RRR,\lle)\rightarrow (\Omega,\lleq)$}. 
\end{theorem}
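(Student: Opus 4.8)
The plan is to prove the downward case (the upward case follows by duality) by induction on $n$, where $\Omega \subseteq R^n$, and to reduce the problem at each step to a one-dimensional fibre analysis using o-minimal cell decomposition. First I would observe that, by Remark~\ref{remark_sets_order}(\ref{itm:remark_ii}) and Example~\ref{ex_FIP}, it suffices to build the cofinal map after replacing $\Omega$ by a definable subset that is still coinitial (downward cofinal) in $(\Omega,\lleq)$; composing with the inclusion will not hurt. Thus the first reduction is: if $\Omega' \subseteq \Omega$ is definable and coinitial, a downward cofinal map into $(\Omega',\lleq)$ is also downward cofinal into $(\Omega,\lleq)$. I would use this freedom repeatedly to shrink $\Omega$ to something combinatorially simple.

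The key structural step is to pick a suitable definable function $h : \Omega \to R^{>0}$, thought of as a "size" or "height" parameter, such that the sublevel sets behave well with respect to $\lleq$. Concretely, I would like to find a definable $h$ so that: (a) for each $\varepsilon > 0$ the set $\{u \in \Omega : h(u) < \varepsilon\}$ is nonempty (so small heights are attainable), and (b) roughly speaking "smaller $h$" is compatible with "$\lleq$-smaller", in the sense that from any element one can pass to one of arbitrarily small height that is $\lleq$ below it. Directedness is what makes (b) possible: given any finite set of elements we can find a common lower bound, and one wants to leverage this to push $h$ down. The natural candidate is something like $h(u) = $ the distance from $u$ to a fixed point, or $1/\|u\|$, intersected with the downward cone; but the real content is arranging that the preorder does not "oscillate" against $h$, which is where o-minimality enters: a definable function of one variable is eventually monotone, so along any curve the interaction of $\lleq$ and $h$ is eventually tame.

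Given such an $h$, I would define $\gamma : \RRR \to \Omega$ by sending $(s,t)$ to a definable choice (via definable Skolem functions, or definable choice in the ordered-group setting, which holds after the standard workaround) of an element $u$ with $h(u)$ small relative to $t$ and satisfying some boundedness or "coordinate truncation" controlled by $s$ — so that as $t \to 0^+$ the image goes $\lleq$-arbitrarily low, and the $s$-coordinate handles the possibility that $\Omega$ is unbounded and one needs two independent directions of "escape". The downward-cofinality condition to verify is: for each $u \in \Omega$ there is $(s_0,t_0)$ with $(s,t) \lle (s_0,t_0)$ (i.e. $s \geq s_0$, $t \leq t_0$) implying $\gamma(s,t) \lleq u$. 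One finds $t_0$ from how small $h$ must be to guarantee descent below $u$, and $s_0$ from a bound ensuring the relevant fibre of $\Omega$ has already been "entered". The induction on $n$ enters because handling the unboundedness/coordinate structure of $\Omega \subseteq R^n$ cleanly requires slicing off one coordinate at a time and applying the $(n-1)$-dimensional statement to fibres, then gluing — the two-parameter family $\RRR$ has exactly enough room to absorb one "limit" parameter ($t$) and one "exhaustion" parameter ($s$) uniformly across the induction.

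The main obstacle I anticipate is precisely the construction of the function $h$ and the verification that directedness forces the descent property (b): a priori $\lleq$ is an arbitrary definable preorder with no relation to the euclidean geometry of $\Omega \subseteq R^n$, so one cannot simply use $\|u\|$ or a coordinate. The resolution should go through the family $\{S_u\}$ from Remark~\ref{remark_sets_order}(\ref{itm:remark_ii}): downward directedness of $(\Omega,\lleq)$ means $\{S_u : u \in \Omega\}$ is a downward directed definable family of nonempty sets, and one can analyse this family via the dimension of the $S_u$ and o-minimal cell decomposition, choosing $h$ to track, say, the dimension and then a euclidean size parameter within a fixed-dimension stratum. Making the two parameters $s,t$ simultaneously control "which stratum" and "how deep" uniformly, while staying definable, is the technical heart; everything else (definable choice, eventual monotonicity, the inclusion reduction) is routine o-minimal bookkeeping.
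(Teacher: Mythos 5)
Your proposal correctly identifies the overall shape of the final construction --- exhaust $\Omega$ by a two-parameter definable family and use definable choice to pick one element below each piece --- but the technical heart of the argument is exactly the part you defer, and your proposed route to it does not obviously close. The missing step is: \emph{why does any definable piece of the exhaustion admit a $\lleq$-lower bound in $\Omega$ at all?} Directedness only gives common lower bounds for \emph{finite} subsets, and an arbitrary definable preorder has no a priori relation to the euclidean geometry of $\Omega\subseteq R^n$, so neither a ``height function'' $h$ with a descent property nor an induction on $n$ through fibres is known to exist; you acknowledge this is the obstacle but do not resolve it. The paper's resolution is quite different from what you sketch. First, via the finite intersection property of the induced family $\{S_u\}$ of Remark~\ref{remark_sets_order}(\ref{itm:remark_ii}), it proves Lemma~\ref{lemma_f}: after a definable re-coordinatization of the index set (a dimension argument locating a cell $C_0$ all of whose finite intersections with the $S_u$ have full dimension, a projection making the sets open, and definable choice of a ball inside each), every sufficiently small ball of indices has nonempty intersection --- hence, for the directed family, every small ball in $\Omega$ is $\lleq$-bounded, uniformly on closed bounded subsets. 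Second, Lemma~\ref{lemma_2_preorders} upgrades this local boundedness to global boundedness of any bounded definable subset, using the doubling property of the sup norm (Lemma~\ref{lemma_doubling}) together with o-minimality applied to the set of radii $\varepsilon$ for which all $\varepsilon$-balls are $\lleq$-bounded; this is where finite directedness is actually leveraged (a finite cover by bounded pieces has a common lower bound), and where the group structure enters. Only then does the $D_\Sigma$ exhaustion $\Omega=\bigcup_{s,t}\Omega(s,t)$ by closed bounded sets produce the map $\gamma(s,t)\lleq\Omega(s,t)$.

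Two smaller points. Your reduction to a coinitial definable subset is fine (transitivity of $\lleq$ gives it), but it is not what the paper needs; what it needs is invariance under definable preorder isomorphism, because Lemma~\ref{lemma_f} genuinely changes the ambient copy of $\Omega$ (a disjoint-union re-coordinatization along a cell partition to force continuity of the choice function). Also, any correct proof must use the ordered group structure somewhere, since Proposition~\ref{prop:example} shows the theorem fails for $(R,<)$ alone; your sketch never identifies where that hypothesis is used, which is a warning sign that the proposed mechanism (eventual monotonicity of one-variable functions plus cell decomposition) cannot suffice on its own.
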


Note that, if $(\Omega, \lleq)$ is definable over a set of parameters $A$, and $b\in R$ is any non-zero parameter, then the downward (respectively upward) cofinal map given by Theorem~\ref{them_1st_countability} is $Ab$-definable. To see this, it suffices to recall that the structure $(\RR,b)$ has definable choice, where every choice function is $0$-definable. Equivalently any choice function in $\RR$ is $b$-definable. Then apply the theorem to the prime model over $Ab$, which by definable choice is $dcl(Ab)$.


We prove the downward case of Theorem~\ref{them_1st_countability}. From the duality of the definitions it is clear that the upward case follows from this. Therefore, since there will be no room for confusion, from now on and until the end of the section we write \textquotedblleft directed" and \textquotedblleft cofinal" instead of respectively \textquotedblleft downward directed" and \textquotedblleft downward cofinal". 

In order to prove Theorem~\ref{them_1st_countability} we require three lemmas, the first of which concerns definable families with the finite intersection property. 
Recall that a family of sets $\SSS$ has the \emph{Finite Intersection Property} (FIP) if the intersection $\bigcap_{1\leq i\leq n} S_i$ is nonempty for all $S_1,\ldots, S_n\in \SSS$.

\begin{example}
If $\SSS$ is a directed family of sets (e.g. Example \ref{ex_FIP}), then $\SSS$ has the finite intersection property. The converse is not necessarily true; the definable family $\{R\setminus\{x\} : x\in R\}$ has the finite intersection property but is not directed.
\end{example}

\begin{lemma}\label{lemma_f}
Let $\Omega\subseteq R^N$ and let $\SSS=\{S_u\subseteq R^n : u\in \Omega\}$ be a definable family of sets with the finite intersection property. 
There exists a definable set $\Omega_h$ and a definable bijection \mbox{$h:\Omega \rightarrow \Omega_h$} such that the push-forward of $\SSS$ by $h$ satisfies the following properties. 
\begin{enumerate}[(1)]
\item For every $u\in \Omega_h$, there exists $\varepsilon=\varepsilon(u)>0$ such that \mbox{$\bigcap_{v\in\Omega_h, \; \|v-u\|<\varepsilon} S_v \neq \emptyset$.} 
\item For every closed and bounded definable set $B\subseteq \Omega_h$, there exists $\varepsilon=\varepsilon(B)>0$ such that $\bigcap_{v\in B, \; \|v-u\|<\varepsilon} S_v \neq \emptyset$ for every $u\in B$.
\end{enumerate}
\end{lemma}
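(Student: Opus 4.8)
The plan is to prove (1) and (2) together by induction on $d=\dim\Omega$, working throughout with the \emph{dual} family $T_x:=\{u\in\Omega : x\in S_u\}$, $x\in R^n$. In these terms the finite intersection property of $\SSS$ says exactly that every finite subset of $\Omega$ is contained in some $T_x$, and $\bigcup_x T_x=\Omega$ because every $S_u$ is nonempty. The basic translation I would use is this: for a point $u$ of any definable index set $\Lambda$ with associated dual family $\{T'_x\subseteq\Lambda : x\in R^n\}$, one has $\bigcap_{v\in\Lambda,\; \|v-u\|<\varepsilon}S'_v\neq\emptyset$ for some $\varepsilon>0$ if and only if $u\in\operatorname{int}_\Lambda(T'_x)$ for some $x$. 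Since the dual family of the push-forward of $\SSS$ by a bijection $h:\Omega\to\Omega_h$ is $\{h(T_x):x\in R^n\}$, property (1) asks precisely that the relative interiors $\operatorname{int}_{\Omega_h}(h(T_x))$ cover $\Omega_h$, and property (2) asks for a uniform such radius over any closed bounded definable subset. Both properties behave well under forming a disjoint union of index sets placed in regions of a large $R^M$ with pairwise disjoint closures: a closed bounded definable test set then meets only finitely many pieces and, being bounded, lies at positive distance from the others, so the various radii may simply be minimised (and for property (1), which ranges over all of $\Omega_h$, one just shrinks each radius below the distance to the other pieces' closures).

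Base case $d=0$: $\Omega$ is finite, every point is isolated, and every $S_u$ is nonempty, so the identity works. For the inductive step, apply o-minimal cell decomposition to partition $\Omega$ into finitely many cells. By the disjoint-union remark it suffices to reindex each cell $C$ so that the subfamily $\{S_v : v\in C\}$ satisfies (1) and (2), and then reassemble the reindexed pieces into a single $\Omega_h\subseteq R^M$ with pairwise disjoint closures. A cell of dimension $<d$ is handled by the inductive hypothesis. A cell of dimension $d$ is definably homeomorphic to an open box, so we may assume $\Omega$ itself is an open box, whose relative topology is its box topology. Let $\Delta:=\{u\in\Omega : u\notin\operatorname{int}(T_x)\text{ for every }x\}$, equivalently the set of $u$ at which every local intersection $\bigcap_{v\in\Omega,\; \|v-u\|<\varepsilon}S_v$ is empty. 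The crucial claim is that $\dim\Delta<d$. Granting it, split $\Omega=(\Omega\setminus\Delta)\sqcup\Delta$; the restriction of $\SSS$ to $\Delta$ still has the finite intersection property, so the inductive hypothesis reindexes it, while $\Omega\setminus\Delta$ is kept in place. On $\Omega\setminus\Delta$ property (1) holds by the definition of $\Delta$, and (2) follows because the definable function $r(u):=\sup\{\varepsilon>0:\bigcap_{v\in\Omega\setminus\Delta,\; \|v-u\|<\varepsilon}S_v\neq\emptyset\}$ is positive on $\Omega\setminus\Delta$ and lower semicontinuous (a ball of radius $\varepsilon-\|u-u_0\|$ about $u$ lies inside the ball of radius $\varepsilon$ about $u_0$), hence attains a positive minimum on any closed bounded definable subset by definable curve selection and the monotonicity theorem; on $\Delta$ properties (1) and (2) come from the inductive hypothesis, and the disjointness of closures ensures that neighbourhoods used in one piece never reach the other.

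The main obstacle is the claim $\dim\Delta<d$. Unwinding the definitions, one must rule out that an open box $U$ is covered by a definable family $\{T_x\cap U : x\in R^n\}$ of subsets of $U$, each of dimension $<\dim U$, with the property that every finite subset of $U$ is contained in some member. In dimension $1$ this is immediate from uniform finiteness: the sets $T_x\cap U$ are then finite of uniformly bounded cardinality and so cannot contain arbitrarily large finite subsets of $U$. For $\dim U\ge 2$ I would reduce to this by combining a cell decomposition of the incidence set $W=\{(u,x):x\in S_u\}$ fibred over the $u$-coordinates, definable choice, and the fact that a definable function is continuous on a dense open definable subset of its domain, so as to produce, for a generic $u_0\in U$, a parameter $x_0$ whose membership relation $u\in T_{x_0}$ persists on a whole neighbourhood of $u_0$, contradicting the assumption that every $T_x\cap U$ has empty interior. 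The delicate point is to choose the relevant selection out of the varying fibre $S_u$ so that its value at $u_0$ still lies in all nearby fibres $S_v$; making this robust is where o-minimal dimension theory must be used carefully, and it is the technical heart of the lemma. Once $\dim\Delta<d$ is in hand, the induction and the gluing described above close the argument, with all maps and index sets visibly definable.
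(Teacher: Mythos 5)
Your scaffolding is sound: the reformulation of property (1) via the dual sets $T_x=\{u\in\Omega : x\in S_u\}$ (so that (1) says the relative interiors of the $T_x$ cover the reindexed $\Omega$), the disjoint-union gluing, and the observation that the radius function $r(u)$ is $1$-Lipschitz (not merely lower semicontinuous) and hence attains a positive minimum on closed bounded definable sets by the o-minimal extreme value theorem, are all correct and broadly parallel to what the paper does (the paper likewise reindexes by taking the disjoint union of the cells of a decomposition). The problem is that the entire content of the lemma has been pushed into the single claim $\dim\Delta<d$, and that claim is not proved. Your proposed argument for it --- produce, for generic $u_0$, a parameter $x_0$ whose membership $u\in T_{x_0}$ persists on a neighbourhood of $u_0$ --- is a restatement of the claim, not a proof of it; and the suggested reduction to the one-dimensional uniform-finiteness case does not go through as described, because for $d\ge 2$ the sets $T_x\cap U$ may have dimension $d-1\ge 1$ (e.g.\ each may contain entire lines of any given slicing), so uniform finiteness does not apply to slices and a generic choice of slicing direction is not obviously available. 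Note also that the outer induction on $\dim\Omega$ gives no leverage here: the claim concerns a single top-dimensional open cell, so the inductive hypothesis about lower-dimensional index sets never enters its proof.

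What is actually needed to establish $\dim\Delta<d$ is the dimension-theoretic reduction that forms the core of the paper's proof: (a) replace $\SSS$ by $\{S_u\cap S\}$ where $S$ is a finite intersection of minimal dimension $m$, so that every finite intersection has dimension exactly $m$; (b) show that some $m$-dimensional cell $C_0$ of a decomposition of one member still meets every finite intersection in dimension $m$ (this uses that the dimension of a finite union is the maximum of the dimensions); (c) project $C_0$ homeomorphically onto an open subset of $R^m$, so that every set in the resulting family has nonempty interior; and (d) use definable choice to select a ball $B(x_u,\varepsilon_u)$ inside each set, whereupon the selection $u\mapsto(x_u,\varepsilon_u)$ is continuous off a set of dimension $<d$, and at each point of continuity the chosen point $x_{u_0}$ lies in all nearby fibres. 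Steps (a)--(c) are precisely the ``robust selection'' you flag as the technical heart but do not supply; without them the proof is incomplete. If you add them, your induction-plus-gluing organisation does yield the lemma, but at that point it coincides in substance with the paper's argument, with the cell decomposition applied to the selection map rather than stated up front.
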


\begin{proof}
We prove the result by showing that, for $\Omega$ and $\SSS$ as in the statement of the lemma, there exists a definable map \mbox{$f:\Omega \rightarrow \cup_{u\in\Omega} S_u \times R^{>0}$}, given by $u\mapsto (x_u,\varepsilon_u)$, such that, for all $u, v \in \Omega$,
\begin{equation}\label{eqn_f}\tag{$\fS{f}{\SSS}$}
 \|f(u)-f(v)\|<\frac{\varepsilon_u}{2} \Rightarrow x_u \in S_v.
\end{equation} 
If $\fS{f}{\SSS}$ holds for a continuous function $f$, then the lemma holds with $\Omega_h=\Omega$ and $h=id$. This follows from the observation that, by $\fS{f}{\SSS}$ and the continuity of $f$ at $u\in\Omega$, we have 
\[
x_u \in \bigcap_{\substack{v\in\Omega \\ \|u-v\|<\delta}} S_v \neq \emptyset
\]
whenever $\delta>0$ is sufficiently small.
Moreover, if $f$ is continuous on a definable closed and bounded set $B\subseteq \Omega$ and $\varepsilon_m$ is the minimum of the map $u\mapsto \varepsilon_u$ on $B$, then uniform continuity yields a $\delta>0$ such that $\|f(u)-f(v)\|<\frac{\varepsilon_m}{2}$ for all $u,v\in B$ satisfying $\|u-v\|<\delta$. So in this case we have
\[
x_u \in \bigcap_{\substack{v\in B \\ \|u-v\|<\delta}} S_v \neq \emptyset, \quad \text{ for all $u\in B$}. 
\]

In the case that $\fS{f}{\SSS}$ holds for a function $f$ which is not necessarily continuous, then we may modify the above argument by identifying an appropriate bijection $h$ and push-forward $\SSS_h=\{S_{h^{-1}(u)} : u\in\Omega_h\}$ to complete the proof as follows.
By o-minimality, let $C_1,\ldots, C_l$ be a a cell partition of $\Omega$ such that $f$ is continuous on every $C_i$. Consider the disjoint union $\Omega_h=\cup_{1\leq i\leq l} (C_i \times \{i\})$ and the natural bijection $h:\Omega \rightarrow \Omega_h$. This map is clearly definable. Moreover, for every cell $C_i$, the restriction $h|_{C_i}$ is a homeomorphism and $h(C_i)$ is open in $\Omega_h$. It follows that the map $f\circ h^{-1}:\Omega_h\rightarrow \cup_{u\in \Omega} S_u\times R^{>0}$ is definable and continuous. Additionally, note that $\fS{f\circ h^{-1}}{\SSS_h}$ holds. Consequently, the lemma holds via an analogous argument to the one above. 

Thus it remains to prove the existence of a definable function $f$ satisfying $\fS{f}{\SSS}$. 
First of all we show that we can assume that there exists $0\leq m \leq n$ such that the following property (\ref{eqn_P}) holds:
\begin{equation}\label{eqn_P}\tag{P}
\text{dim}\left(\bigcap_{1\leq i\leq k} S_{u_i}\right)=m, \text{ for every } u_1,\ldots, u_k \in \Omega. 
\end{equation}
Let $m'$ be the minimum natural number such that there exist $u_1,\ldots, u_k \in \Omega$ with $\text{dim}(\cap_{1\leq i\leq k} S_{u_i})=m'$. 
Set $S:=\cap_{1\leq i\leq s} S_{u_i}$. Consider the definable family $\SSS_*:=\{S_u \cap S : u\in \Omega\}$. This family has the FIP. Note that if $\fS{f}{\SSS_*}$ is satisfied for some definable function $f$ then $\fS{f}{\SSS}$ is satisfied too. Hence, by passing to $\SSS_*$ if necessary, we may assume that (P) holds for some fixed $m$.

Suppose that $m = n$, so in particular all sets $S_u$ have nonempty interior (we call this the open case). By definable choice, let $f:\Omega \rightarrow \cup_u S_u \times R^{>0}$, $f(u)=(x_u,\varepsilon_u)$, be a definable map such that, for every $u\in\Omega$, the open ball of center $x_u$ and radius $\varepsilon_u$ is contained in $S_u$. Then, for any $u,v\in \Omega$, $\|f(u)-f(v)\|<\frac{\varepsilon_u}{2}$ implies both $\|x_{u}-x_{v}\|<\frac{\varepsilon_u}{2}$ and $\varepsilon_{v}>\frac{\varepsilon_{u}}{2}$. Hence $\|x_{u}-x_{v}\|<\varepsilon_{v}$, and so $x_u \in S_{v}$.

Now suppose that $m<n$. 
Let $u_0$ be a fixed element in $\Omega$ and let $\XX$ be a finite partition of $S_{u_0}$ into cells. We claim that there must exist some cell $C\in \XX$ such that $\text{dim}(C \cap \bigcap_{1\leq i\leq k} S_{u_i})=m$ for any $u_1,\ldots, u_k \in \Omega$. Suppose that the claim is false. Then, for every $C\in \XX$, there exist $k_C<\omega$ and $u^C_{1},\ldots, u^C_{k_C} \in \Omega$ such that $\text{dim}(C \cap \bigcap_{1\leq i\leq k_C} S_{u^C_i})<m$. In that case however 
\begin{align*}
m &=\text{dim} \left( S_{u_0} \cap \bigcap_{C\in\XX, 1\leq i \leq k_C} S_{u^C_i} \right) 
\leq \text{dim} \left( \bigcup_{C\in \XX} \left( C \cap \bigcap_{ 1\leq i \leq k_C} S_{u^C_i}) \right) \right) \\
&=\max_{C\in\XX} \left \{ \text{dim} \left( C \cap \bigcap_{ 1\leq i \leq k_C} S_{u^C_i} \right) \right \} <m,
\end{align*}
which is a contradiction. So the claim holds. 

Let $C_0 \in \XX$ be a cell with the described property. Clearly $\text{dim}(C_0)=m$.
Consider the definable family $\SSS'=\{S'_u=C_0\cap S_u : u\in \Omega\}$. By the claim, $\SSS'$ satisfies the FIP; in fact, any intersection of finitely many sets in $\SSS'$ has dimension $m$. We prove the lemma for $\SSS'$ and the result for $\SSS$ follows. 

Let $\pi:C_0\rightarrow \pi(C_0)\subseteq R^m$ be a projection which homeomorphically maps $C_0$ onto an open cell $\pi(C_0)$.
Since $\pi$ is a bijection, the set $\pi(S'_u)\subseteq R^m$ has nonempty interior, for every $u\in\Omega$. 
Consider the definable family $\{\pi(S'_u)\subseteq R^m : u\in \Omega\}$. 
This is a definable family of sets with nonempty interior that has the FIP. By the open case, there exists a definable $g=(g_1,g_2):\Omega \rightarrow R^m\times R^{>0}$, $g(u)=(x_u,\varepsilon_u)$, such that, for every $u,v\in \Omega$, $\|g(u)-g(v)\|<\frac{\varepsilon_u}{2}$ implies $x_u \in \pi(S'_v)$, i.e. $\pi^{-1}(x_u)\in S'_v$. Let $f=(f_1,f_2):\Omega \rightarrow R^m\times R^{>0}$ be given by $f_1=\pi^{-1}\circ g_1$ and $f_2=g_2$. Since $g$ is a projection of $f$, we have $\|g(u)-g(v)\|\leq \|f(u)-f(v)\|$, for all $u,v\in \Omega$. 
The result follows.
\end{proof}

\begin{definition}
Let $(\Omega, \lleq)$ be a directed set. 
We say that $\Omega' \subseteq \Omega$ is \emph{$\lleq$-bounded in $\Omega$} if there exists $v\in\Omega$ such that $v\lleq u$ for all $u\in \Omega'$. We write $v\lleq \Omega'$. 
\end{definition}


Let $(\Omega,\lleq)$ be a definable directed set and let $\SSS=\{S_u : u\in \Omega\}$ be a directed family of sets as in Example~\ref{ex_FIP}. By construction of $\SSS$, note that, if a subfamily $\{S_u :u\in \Omega'\}$ has nonempty intersection, then $\Omega'$ is $\lleq$-bounded in $\Omega$. Hence Lemma~\ref{lemma_f} yields the following corollary.

\begin{corollary}\label{remark_another_lemma_f}
 Any definable directed set $(\Omega',\lleq')$ is definably preorder-isomorphic to a definable directed set $(\Omega,\lleq)$ such that 
\begin{enumerate}[(1)]
\item  for all $u\in \Omega$ there exists an $\varepsilon>0$ such that $B(u,\varepsilon)\cap \Omega$ is $\lleq$-bounded in $\Omega$;
\item for any definable closed and bounded set $B\subseteq \Omega$, there exists an $\varepsilon>0$ such that $B(u,\varepsilon)\cap B$ is $\lleq$-bounded in $\Omega$ for every $u\in B$.
\end{enumerate}
\end{corollary}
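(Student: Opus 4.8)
The plan is to reduce the statement directly to Lemma~\ref{lemma_f}, applied to the directed family of sets naturally attached to $(\Omega',\lleq')$ via Example~\ref{ex_FIP}. Concretely, given $(\Omega',\lleq')$, form the family $\SSS'=\{S'_u : u\in\Omega'\}$ with $S'_u=\{w\in\Omega' : w\lleq' u\}$ as in Remark~\ref{remark_sets_order}(\ref{itm:remark_ii}). By Example~\ref{ex_FIP} this is a directed family of sets, hence $\emptyset\notin\SSS'$ and $\SSS'$ has the finite intersection property, so Lemma~\ref{lemma_f} applies: it supplies a definable set $\Omega$ and a definable bijection $h:\Omega'\to\Omega$ such that the push-forward family $\{S'_{h^{-1}(u)} : u\in\Omega\}$ satisfies properties (1) and (2) of that lemma. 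Letting $\lleq$ be the push-forward of $\lleq'$ by $h$ makes $h:(\Omega',\lleq')\to(\Omega,\lleq)$ a definable preorder-isomorphism, so $(\Omega,\lleq)$ is again a definable directed set. It remains only to translate properties (1) and (2) of Lemma~\ref{lemma_f} into the $\lleq$-boundedness conditions of the corollary.

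The bridge between the two formulations is the identity
\[
S'_{h^{-1}(v)} \;=\; \{w\in\Omega' : h(w)\lleq v\} \;=\; h^{-1}\bigl(\{x\in\Omega : x\lleq v\}\bigr), \qquad v\in\Omega,
\]
valid because $h$ is a preorder-isomorphism. Since $h$ is a bijection, it follows that for any $\Omega_0\subseteq\Omega$,
\[
\bigcap_{v\in\Omega_0} S'_{h^{-1}(v)} \neq\emptyset
\iff \bigcap_{v\in\Omega_0}\{x\in\Omega : x\lleq v\}\neq\emptyset
\iff \Omega_0 \text{ is } \lleq\text{-bounded in } \Omega,
\]
the last equivalence being exactly the observation recorded just before the corollary: a point lying in every $\{x : x\lleq v\}$ for $v\in\Omega_0$ is precisely a $\lleq$-lower bound for $\Omega_0$.

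With this in hand, property (1) of Lemma~\ref{lemma_f}, applied with $\Omega_0=B(u,\varepsilon)\cap\Omega$, says exactly that $B(u,\varepsilon)\cap\Omega$ is $\lleq$-bounded in $\Omega$ for a suitable $\varepsilon=\varepsilon(u)>0$, which is condition (1) of the corollary; and property (2), applied with $\Omega_0=B(u,\varepsilon)\cap B$ for a fixed definable closed and bounded $B\subseteq\Omega$ (with $\varepsilon=\varepsilon(B)$ uniform in $u\in B$), yields condition (2). I do not expect any genuine obstacle: all the real content has already been extracted in Lemma~\ref{lemma_f}, and the only points requiring care are the routine bookkeeping between the index $u\in\Omega'$ and the index $h(u)\in\Omega$ of the push-forward family, together with the trivial facts that preorder-isomorphisms preserve both directedness and $\lleq$-boundedness.
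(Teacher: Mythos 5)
Your argument is correct and is essentially the paper's own: the authors likewise apply Lemma~\ref{lemma_f} to the directed family $\{S_u=\{v : v\lleq u\}\}$ of Example~\ref{ex_FIP} and observe that a subfamily having nonempty intersection is the same as the corresponding index set being $\lleq$-bounded. Your write-up merely makes the bookkeeping through the bijection $h$ explicit, which the paper leaves implicit.
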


We now prove a lemma (Lemma~\ref{lemma_2_preorders}) which allows us to see $\lleq$-boundedness as a local property, and which we will use in proving Theorem~\ref{them_1st_countability}. 
We first show that the doubling property of the supremum metric in $\R^n$ generalises to $R^n$.

\begin{lemma}\label{lemma_doubling}
For any $x\in R^n$ and $r>0$, there exists a finite set of points $P\subseteq B(x,r)$, where $|P|\leq 3^n$, such that the sets of balls of radius $\frac{r}{2}$ centered on points in $P$ covers the ball of radius $r$ centered on $x$, i.e.
\[
B(x,r)\subseteq \bigcup_{y\in P} B\left( y, \frac{r}{2} \right).
\] 
\end{lemma}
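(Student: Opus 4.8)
The plan is to mimic the classical proof of the doubling property of the $\ell_\infty$ metric on $\mathbb{R}^n$, which works coordinate-by-coordinate. The key point is that the sup-norm ball $B(x,r)$ is simply the product of one-dimensional intervals $\prod_{i=1}^{n}(x_i - r, x_i + r)$, so it suffices to solve the problem in dimension one and then take products. In dimension one, the open interval $(x_1 - r, x_1 + r)$ is covered by the three intervals of radius $r/2$ centred at $x_1 - \frac{r}{2}$, $x_1$, and $x_1 + \frac{r}{2}$; indeed any point $z$ with $|z - x_1| < r$ satisfies $|z - c| < \frac{r}{2}$ for at least one of these three centres $c$, by a trivial case split on whether $z$ lies in the left third, middle third, or right third of the interval. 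Note this uses only that $R$ is an ordered group (so that $\frac{r}{2}$ makes sense — and here I should be careful, since division by $2$ is available in an ordered group once we know $2$ is invertible; but in fact we only need \emph{some} element strictly between $0$ and $r$ together with the additive structure, and the statement as phrased with $\frac{r}{2}$ presupposes the halving is meaningful, which it is in a divisible ordered group, the standard setting here).

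Concretely, I would first fix, for each coordinate $i \in \{1,\dots,n\}$, the three-point set $P_i = \{x_i - \tfrac{r}{2},\, x_i,\, x_i + \tfrac{r}{2}\} \subseteq (x_i - r, x_i + r)$, and verify the one-dimensional covering claim: for any $z \in R$ with $|z - x_i| < r$, there is $c \in P_i$ with $|z - c| < \tfrac{r}{2}$. Then I would set $P = \prod_{i=1}^{n} P_i \subseteq B(x,r)$, which has $|P| = 3^n$, hence $|P| \leq 3^n$. Given any $y = (y_1,\dots,y_n) \in B(x,r)$, each coordinate satisfies $|y_i - x_i| < r$, so choosing $c_i \in P_i$ with $|y_i - c_i| < \tfrac{r}{2}$ and setting $p = (c_1,\dots,c_n) \in P$ gives $\|y - p\| = \max_i |y_i - c_i| < \tfrac{r}{2}$, i.e. $y \in B(p, \tfrac{r}{2})$. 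This establishes $B(x,r) \subseteq \bigcup_{p \in P} B(p, \tfrac{r}{2})$.

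There is essentially no obstacle here; the only thing requiring a moment's care is the one-dimensional case split and making sure the endpoints of the covering subintervals are handled by the right centre (e.g. a point near $x_i + r$ is covered by the ball of radius $\tfrac{r}{2}$ about $x_i + \tfrac{r}{2}$, since then $|z - (x_i + \tfrac{r}{2})| < \tfrac{r}{2}$). One should also remark that $P \subseteq B(x,r)$ since each $P_i \subseteq (x_i - r, x_i + r)$, so the centres genuinely lie in the ball as the statement demands. Everything is definable (indeed quantifier-free in the group language with the fixed parameters $x$ and $r$), so no appeal to o-minimality is needed; the lemma is purely a fact about ordered (divisible) abelian groups.
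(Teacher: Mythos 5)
Your proof is correct and uses exactly the same set of centres as the paper, namely $P=\{(x_1+\delta_1,\ldots,x_n+\delta_n):\delta_i\in\{-\tfrac{r}{2},0,\tfrac{r}{2}\}\}$; the paper simply asserts the covering claim while you spell out the coordinate-wise verification. (Your aside about divisibility is fine: o-minimal ordered groups are divisible, so $\tfrac{r}{2}$ is indeed meaningful here.)
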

\begin{proof}
Fix $x=(x_1,\ldots, x_n)\in R^n$ and $r>0$. Set 
\[
P := \left\{ y=(y_1,\ldots, y_n): y_i=x_i+\delta_i,\, \delta_i\in\left\{\frac{r}{2}, -\frac{r}{2}, 0 \right\}, \,  1\leq i \leq n \right\}.
\]
Note that $|P|=3^n$ and, for every $z\in B(x,r)$, there exists some $y\in P$ such that $\|y-z\|<\frac{r}{2}$. 
Thus $B(x,r)\subseteq \bigcup_{y\in P} B(y,\frac{r}{2})$.  
\end{proof}



\begin{lemma}\label{lemma_2_preorders}
Let $(\Omega, \lleq)$ be a definable directed set and let $S\subseteq \Omega$ be a bounded definable set. Suppose that there exists $\varepsilon_0>0$ such that, for all $u\in S$, $B(u,\varepsilon_0)\cap S$ is $\lleq$-bounded in $\Omega$. Then $S$ is $\lleq$-bounded in $\Omega$.
\end{lemma}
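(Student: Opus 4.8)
The plan is to exploit the fact that $S$ is bounded together with the uniform local boundedness hypothesis, and to run an induction that successively ``doubles'' the radius of the neighbourhoods one knows to be $\lleq$-bounded. Concretely, for each $\varepsilon > 0$ let me call a point $u \in S$ \emph{$\varepsilon$-good} if $B(u,\varepsilon) \cap S$ is $\lleq$-bounded in $\Omega$; the hypothesis says every point of $S$ is $\varepsilon_0$-good, and the conclusion is equivalent to the assertion that (for $\varepsilon$ large enough that $B(u,\varepsilon)\supseteq S$ for some, hence any, $u\in S$) some point of $S$ is $\varepsilon$-good --- in fact that $S$ itself, being contained in such a ball, is $\lleq$-bounded.

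First I would record the key doubling step: if $u \in S$ is $\varepsilon$-good, then $u$ is $2\varepsilon$-good. To see this, apply Lemma~\ref{lemma_doubling} to the ball $B(u,2\varepsilon)$, obtaining a finite set $P$, $|P| \le 3^n$, with $B(u,2\varepsilon) \subseteq \bigcup_{y \in P} B(y,\varepsilon)$. For each $y \in P$ for which $B(y,\varepsilon)\cap S \ne \emptyset$, pick a point $w_y \in B(y,\varepsilon)\cap S$; then $B(y,\varepsilon) \subseteq B(w_y, 2\varepsilon)$, and... here I need $B(w_y, 2\varepsilon)\cap S$ to be $\lleq$-bounded, which is not immediate from $\varepsilon$-goodness of a single point. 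So the honest formulation is: I should work with the hypothesis in its uniform form. Since \emph{every} point of $S$ is $\varepsilon_0$-good, let me show by induction on $k$ that every point of $S$ is $2^k\varepsilon_0$-good. The inductive step: given $u \in S$, cover $B(u, 2^{k+1}\varepsilon_0)$ by balls $B(y, 2^k\varepsilon_0)$, $y \in P$, $|P|\le 3^n$, via Lemma~\ref{lemma_doubling}; for each such $y$, if $B(y,2^k\varepsilon_0)\cap S = \emptyset$ discard it, otherwise choose $w_y \in B(y,2^k\varepsilon_0)\cap S$, so that $B(y,2^k\varepsilon_0)\cap S \subseteq B(w_y, 2^{k+1}\varepsilon_0)\cap S \subseteq B(w_y, 2^{k+1}\varepsilon_0) \cap S$; hmm, again a radius blow-up. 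The clean fix: choose $w_y$ to be the \emph{center} issue --- instead note $B(y,2^k\varepsilon_0) \subseteq B(w_y, 2\cdot 2^k \varepsilon_0)$ is too big. Let me instead just use that each $w_y$ is $2^k\varepsilon_0$-good by induction, i.e. $B(w_y, 2^k\varepsilon_0)\cap S$ is $\lleq$-bounded, and observe $B(y, 2^k\varepsilon_0) \cap S \subseteq B(w_y, 2^{k}\varepsilon_0 + 2^k\varepsilon_0)\cap S$. The triangle-inequality constant is the obstacle; the correct bookkeeping is to run Lemma~\ref{lemma_doubling} so as to halve, not double: cover $B(u, 2^{k+1}\varepsilon_0)$ by finitely many balls of radius $2^k \varepsilon_0$ \emph{centered at points of $B(u,2^{k+1}\varepsilon_0)$}, and for each center $y$ with $B(y, 2^k\varepsilon_0)\cap S\ne\emptyset$ pick $w_y\in B(y,2^k\varepsilon_0)\cap S$; then any $z\in B(y,2^k\varepsilon_0)\cap S$ satisfies $\|z - w_y\| < 2^{k+1}\varepsilon_0$, so $z \notin$... this still gives radius $2^{k+1}\varepsilon_0$ around $w_y$, which is exactly the radius I am trying to establish goodness for, not $2^k\varepsilon_0$. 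So the argument does \emph{not} close at the same $k$; the resolution is that $\lleq$-boundedness of a union of finitely many $\lleq$-bounded sets follows from directedness of $(\Omega,\lleq)$, and I should set up the induction as: every point of $S$ is $2^k\varepsilon_0$-good \emph{and moreover finitely many translates combine}. Let me restate the induction as I would actually write it: I claim every $u\in S$ is $2^k\varepsilon_0$-good for all $k\ge 0$. Base case $k=0$ is the hypothesis. For the step, fix $u\in S$ and apply Lemma~\ref{lemma_doubling} with $r = 2^{k+1}\varepsilon_0$ to get $P\subseteq B(u, 2^{k+1}\varepsilon_0)$, $|P|\le 3^n$, with $B(u,2^{k+1}\varepsilon_0)\subseteq\bigcup_{y\in P}B(y,2^k\varepsilon_0)$. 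Enlarging $P$ is harmless, so I may and do assume each $y \in P$ actually lies in $S$ when $B(y,2^k\varepsilon_0)\cap S\neq\emptyset$ — namely replace $y$ by a point $w_y\in B(y,2^k\varepsilon_0)\cap S$ and note $B(y,2^k\varepsilon_0)\subseteq B(w_y,2^{k+1}\varepsilon_0)$; but since we want radius $2^k\varepsilon_0$... I will instead from the outset only ever need: $B(u,2^{k+1}\varepsilon_0)\cap S$ is covered by finitely many sets of the form $B(w,2^{k}\varepsilon_0)\cap S$ with $w\in S$ — wait, that needs the covering balls to be centered in $S$, and Lemma~\ref{lemma_doubling} centers them in $B(u,r)$, not in $S$. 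The genuine obstacle is precisely this mismatch, and the standard doubling-metric-space trick to handle it is: a ball $B(y, 2^k\varepsilon_0)$ that meets $S$ at some point $w$ is contained in $B(w, 2\cdot 2^k\varepsilon_0) = B(w, 2^{k+1}\varepsilon_0)$, and $w$ is, by induction hypothesis applied with $k+1$ replaced by... no. I will therefore run the induction on the statement ``$S$ is covered by finitely many $\lleq$-bounded subsets of the form (ball of radius $2^k\varepsilon_0$)$\cap S$'' rather than on goodness of individual points, and terminate once $2^k\varepsilon_0 \ge \mathrm{diam}(S)$, at which point one ball suffices.

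So here is the plan I would commit to writing. \emph{Step 1:} Reduce to showing that $S$ is a finite union of $\lleq$-bounded definable subsets; this suffices because, by the defining property of a directed set, a finite union of $\lleq$-bounded sets is $\lleq$-bounded (iterate: if $v_1 \lleq \Omega'_1$ and $v_2\lleq\Omega'_2$, pick $v\lleq v_1,v_2$, then $v\lleq \Omega'_1\cup\Omega'_2$). \emph{Step 2:} Since $S$ is bounded, fix $x_0$ and $R_0$ with $S\subseteq B(x_0,R_0)$, and fix $k$ with $2^k\varepsilon_0 > 2R_0$, so that $B(x_0, 2^k\varepsilon_0)\supseteq B(x_0,R_0)\supseteq S$; thus it is enough to show, for every $j$ with $0\le j\le k$, that $B(z, 2^j\varepsilon_0)\cap S$ is $\lleq$-bounded for every $z\in R^n$. \emph{Step 3 (induction on $j$):} For $j=0$ this is the hypothesis when $z\in S$, and trivial (empty set, or rather: if $B(z,\varepsilon_0)\cap S\neq\emptyset$ pick $w\in B(z,\varepsilon_0)\cap S$, then $B(z,\varepsilon_0)\cap S\subseteq B(w,2\varepsilon_0)\cap S$) — and this last containment shows I should phrase the induction hypothesis at ``radius $2^j\varepsilon_0$, center anywhere'' and use that a ball of radius $\rho$ meeting $S$ at $w$ lies in the ball of radius $2\rho$ about $w$. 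For the step from $j$ to $j+1$: given $z\in R^n$ with $B(z,2^{j+1}\varepsilon_0)\cap S\neq\emptyset$, apply Lemma~\ref{lemma_doubling} to write $B(z,2^{j+1}\varepsilon_0)\subseteq\bigcup_{y\in P}B(y,2^j\varepsilon_0)$ with $|P|\le 3^n$; by the induction hypothesis each $B(y,2^j\varepsilon_0)\cap S$ is $\lleq$-bounded; hence $B(z,2^{j+1}\varepsilon_0)\cap S\subseteq\bigcup_{y\in P}\big(B(y,2^j\varepsilon_0)\cap S\big)$ is a union of at most $3^n$ many $\lleq$-bounded sets, hence $\lleq$-bounded by Step 1. \emph{Step 4:} Apply the case $j=k$ with, say, $z=x_0$: $S = B(x_0,2^k\varepsilon_0)\cap S$ is $\lleq$-bounded, which is the conclusion.

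The main obstacle, and the only point requiring real care, is the interplay between the radius in Lemma~\ref{lemma_doubling} (whose covering balls are centered at arbitrary points, possibly outside $S$) and the hypothesis, which only controls balls in a way tied to $S$ through $\lleq$-boundedness of \emph{intersections with $S$}; the fix is to formulate the induction hypothesis for balls of a given radius \emph{centered anywhere in $R^n$}, intersected with $S$, rather than only for points of $S$ — then the doubling lemma applies cleanly at each stage, and the finiteness of the cover together with directedness of $(\Omega,\lleq)$ (Step 1) does the rest. Everything else is the routine observation that boundedness of $S$ makes the induction terminate after finitely many doublings.
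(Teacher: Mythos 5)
Your overall strategy is sound and it genuinely differs from the paper's in how it organizes the doubling argument. The paper considers the definable set $H$ of radii $\varepsilon$ for which every $B(u,\varepsilon)\cap S$ ($u\in S$) is $\lleq$-bounded, uses o-minimality to see that $H$ is an initial segment $(0,r)$, and derives a contradiction from $r<\infty$ by showing $\tfrac{4}{3}r\in H$: it covers $B(u_0,\tfrac{4}{3}r)$ by balls of radius $\tfrac{r}{3}$ via Lemma~\ref{lemma_doubling} and recenters each such ball at a point of $S$ it meets, landing inside a ball of radius $\tfrac{2}{3}r<r$. Your version replaces this o-minimal ``push the endpoint'' step by a finite induction that doubles the radius until a single ball swallows the bounded set $S$; this avoids any appeal to o-minimality or to the definability of $S$ within this lemma, at the cost of doing the radius bookkeeping by hand. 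Both arguments share the same two essential ingredients, which you correctly isolated: the doubling property together with recentering a ball that meets $S$ at one of its own points (at the price of doubling the radius), and the fact that a finite union of $\lleq$-bounded sets is $\lleq$-bounded by directedness.

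There is, however, one step that fails as written: the base case $j=0$ of your induction. You assert that $B(z,\varepsilon_0)\cap S$ is $\lleq$-bounded for every $z\in R^n$, but your justification --- pick $w\in B(z,\varepsilon_0)\cap S$ and note $B(z,\varepsilon_0)\cap S\subseteq B(w,2\varepsilon_0)\cap S$ --- lands you at radius $2\varepsilon_0$ about a point of $S$, which the hypothesis does not control; it only bounds $B(w,\varepsilon_0)\cap S$ for $w\in S$. The repair is immediate: run the induction on radii $2^{j}\cdot\tfrac{\varepsilon_0}{2}$ instead. Then the base case concerns balls of radius $\tfrac{\varepsilon_0}{2}$ with arbitrary center; such a ball meeting $S$ at some $w$ is contained in $B(w,\varepsilon_0)$, so its intersection with $S$ is $\lleq$-bounded by hypothesis, and your inductive step goes through verbatim. (This is exactly the factor-of-two slack the paper builds in by covering $B(u_0,\tfrac{4}{3}r)$ with balls of radius $\tfrac{r}{3}$ rather than $\tfrac{2}{3}r$.) With that adjustment your proof is complete.
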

\begin{proof}

Consider the definable set 
$$H=\{\varepsilon : \forall u\in S, \, B(u,\varepsilon)\cap S \text{ is $\lleq$-bounded in $\Omega$}\}.$$
We have $(0,\varepsilon_0)\subseteq H$, and so $H$ is nonempty, and hence by o-minimality it must be of the form $(0,r)$, for some $r\in [\varepsilon_0,\infty) \cup \{\infty\}$.
 We show $H=(0,\infty)$ and thus, since $S$ is bounded, that $S$ is $\lleq$-bounded in $\Omega$. 

Suppose that $r < \infty$.
 We reach a contradiction by showing $\frac{4}{3}r \in H$. To do so we fix an arbitrary $u_0 \in S$ and show that $B(u_0, \frac{4}{3}r)\cap S$ is $\lleq$-bounded in $\Omega$. 

By repeated application of Lemma~\ref{lemma_doubling}, the ball $B(u_0, \frac{4}{3}r)$ can be covered by finitely many ($\leq 3^{2n}$) balls $B_1, \ldots, B_{k}$ of radius $\frac{r}{3}$.
 For any $1\leq i \leq k$, if $w\in B_i \cap S$, then $B_i\subseteq B(w,\frac{2}{3}r)$, by the triangle inequality. By assumption, $B(w,\frac{2}{3}r)\cap S$ is $\lleq$-bounded in $\Omega$, and so the set $B_i \cap S$ is $\lleq$-bounded in $\Omega$. 
Moreover, if $B_i \cap S$ is empty for some for $1\leq i \leq k$, then $B_i \cap S$ is trivially $\lleq$-bounded in $\Omega$. By the definition of directed set, it follows that $\cup_i B_i \cap S$ is $\lleq$-bounded in $\Omega$, and so, since $B(u_0,\frac{4}{3}r) \subseteq \cup_i B_i$, the set $B(u_0, \frac{4}{3} r)\cap S$ is $\lleq$-bounded in $\Omega$. 
\end{proof}

Corollary~\ref{remark_another_lemma_f} and Lemma~\ref{lemma_2_preorders} together yield the following.
 
\begin{corollary}\label{cor_for_proof_main_them}
Let $(\Omega',\lleq')$ be a definable directed set. Then there exists a definable directed set $(\Omega, \lleq)$ that is definably preorder isomorphic to $(\Omega,\lleq)$ and such that every closed and bounded definable subset of $\Omega$ is $\lleq$-bounded in $\Omega$. 
\end{corollary}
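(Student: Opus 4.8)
The plan is to chain together the two preceding results, Corollary~\ref{remark_another_lemma_f} and Lemma~\ref{lemma_2_preorders}. First I would invoke Corollary~\ref{remark_another_lemma_f} to replace $(\Omega',\lleq')$ by a definably preorder-isomorphic directed set $(\Omega,\lleq)$ satisfying properties (1) and (2) of that corollary: namely, every point of $\Omega$ has a ball whose intersection with $\Omega$ is $\lleq$-bounded, and, more strongly, every definable closed and bounded $B\subseteq\Omega$ admits a uniform radius $\varepsilon=\varepsilon(B)>0$ such that $B(u,\varepsilon)\cap B$ is $\lleq$-bounded in $\Omega$ for every $u\in B$.

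Now let $B\subseteq\Omega$ be an arbitrary closed and bounded definable set; the goal is to show $B$ is $\lleq$-bounded in $\Omega$. Property (2) of Corollary~\ref{remark_another_lemma_f} applied to $B$ gives a single $\varepsilon_0>0$ such that $B(u,\varepsilon_0)\cap B$ is $\lleq$-bounded in $\Omega$ for every $u\in B$. This is precisely the hypothesis of Lemma~\ref{lemma_2_preorders}, taking $S=B$ (which is bounded and definable), with the ambient directed set being $(\Omega,\lleq)$. Lemma~\ref{lemma_2_preorders} then concludes that $S=B$ is $\lleq$-bounded in $\Omega$, which is what we wanted.

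The only subtlety worth flagging is a bookkeeping one: Lemma~\ref{lemma_2_preorders} is stated for a bounded definable set $S\subseteq\Omega$ whose small balls intersected with $S$ are $\lleq$-bounded \emph{in $\Omega$} (not merely in $S$), and property (2) of Corollary~\ref{remark_another_lemma_f} is phrased with exactly this $\lleq$-boundedness-in-$\Omega$ conclusion, so the two dovetail with no loss. (Note also the evident typo in the statement of Corollary~\ref{cor_for_proof_main_them} — ``$(\Omega,\lleq)$ is definably preorder isomorphic to $(\Omega,\lleq)$'' should read ``$(\Omega',\lleq')$ is definably preorder isomorphic to $(\Omega,\lleq)$''.) There is no real obstacle here; the content has already been done in the two cited results, and this corollary is purely their composition.
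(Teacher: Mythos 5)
Your proposal is correct and is exactly the paper's intended argument: the paper offers no written proof beyond the remark that Corollary~\ref{remark_another_lemma_f} and Lemma~\ref{lemma_2_preorders} together yield the result, and your composition of the two (property (2) of the corollary supplying the uniform $\varepsilon_0$ hypothesis of the lemma for an arbitrary definable closed and bounded $B\subseteq\Omega$) is precisely that. You are also right that the statement contains a typo and should read that $(\Omega',\lleq')$ is definably preorder-isomorphic to $(\Omega,\lleq)$.
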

 
We now borrow a definition from~\cite{dolich_miller_steinhorn_10}.
\begin{definition}
A definable set $S$ is \emph{$D_\Sigma$} if there exists a definable family of closed and bounded sets $\{S(s,t) : (s,t) \in \RRR\}$ such that $S=\bigcup_{s,t} S(s,t)$ and $S(s,t)\subseteq S(s',t')$ whenever $(s',t')\lle (s,t)$. 
\end{definition}

To prove Theorem~\ref{them_1st_countability} we use the fact that every definable set is $D_\Sigma$. This can be derived from~\cite{dolich_miller_steinhorn_10}.
We include a proof for the sake of completeness.

\begin{lemma}\label{lemma_D_sigma}
Every definable set is $D_\Sigma$.
\end{lemma}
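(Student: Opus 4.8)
The plan is to show that every definable $S\subseteq R^n$ can be written as an increasing (in the $\lle$ order) union of a $\RRR$-indexed definable family of closed and bounded sets. First I would reduce to a convenient normal form: by taking a cell decomposition of $S$ it suffices to handle a single cell, and in fact (since a finite union of $D_\Sigma$ sets is easily $D_\Sigma$ — intersect the parameter spaces and take unions of the pieces) it suffices to treat each cell separately. For a cell one has a natural coordinate projection $\pi$ mapping it homeomorphically onto an open cell in some $R^d$, together with definable continuous functions describing the cell over its base; so the problem is to exhibit the needed exhaustion and then push it back up through $\pi^{-1}$, using that $\pi^{-1}$ is continuous (hence sends closed bounded sets to closed bounded sets, once we also truncate in the fibre directions).

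The core construction is then for an open cell $U\subseteq R^d$ (or even just for an arbitrary definable set, proceeding by induction on $n$). Given a definable $S\subseteq R^n$, for $(s,t)\in\RRR$ set
\[
S(s,t):=\{x\in S : \|x\|\leq s \text{ and } \operatorname{dist}(x, R^n\setminus S)\geq t\},
\]
where $\operatorname{dist}$ is the $l_\infty$-distance to the complement (with the convention that the distance to the empty set is $+\infty$, so that if $S=R^n$ the second condition is vacuous). Each $S(s,t)$ is definable, bounded (by $\|x\|\leq s$), and closed: it is the intersection of the closed ball $\overline{B(0,s)}$ with the closed set $\{x : \operatorname{dist}(x,R^n\setminus S)\geq t\}$, the latter being closed because $x\mapsto \operatorname{dist}(x,R^n\setminus S)$ is continuous (an easy triangle-inequality argument, valid in any o-minimal expansion of an ordered group since the $l_\infty$ norm is group-definable). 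Monotonicity is immediate from the definition: if $(s',t')\lle(s,t)$, i.e. $s'\le s$ and $t'\ge t$, then $S(s',t')\subseteq S(s,t)$.

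It remains to check $S=\bigcup_{s,t}S(s,t)$. The inclusion $\supseteq$ is clear. For $\subseteq$, fix $x\in S$; then $\|x\|\le s$ for $s$ large enough, so I need that $\operatorname{dist}(x,R^n\setminus S)>0$, i.e. that $x$ has a neighbourhood contained in $S$. This is exactly the point where $S$ being a cell (or at least locally open along $x$) is used: a cell need not be open in $R^n$, so this is false for general $S$ and general $x$, which is why the cell-decomposition reduction and the projection $\pi$ are essential — after applying $\pi$ to an open cell $U\subseteq R^d$, every point of $U$ is interior, so $\operatorname{dist}(x,R^d\setminus U)>0$ and the argument goes through; then one pulls back along $\pi^{-1}$ and, for a cell defined over $U$ by continuous bounds, simultaneously truncates the fibre coordinates to get closedness and boundedness upstairs. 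I would organise this cleanly by induction on $n$, treating a cell as either (the graph of a continuous definable function over) a $D_\Sigma$ base or (the band between two continuous definable functions, possibly $\pm\infty$, over) a $D_\Sigma$ base, and in each case combining the inductively given exhaustion of the base with the truncation $|x_n|\le s$ and the distance-to-the-missing-endpoint $\ge t$ in the last coordinate.

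The main obstacle is precisely the exhaustion identity $S=\bigcup S(s,t)$ for non-open cells: one must keep careful track of the reduction to open cells via coordinate projections and ensure that the pullback under $\pi^{-1}$ preserves closedness and boundedness, which forces the extra truncation in the "vertical" coordinates and a modest bookkeeping of the continuous defining functions of the cell. Everything else — definability, closedness via continuity of the $l_\infty$-distance function, boundedness, and $\lle$-monotonicity — is routine and works verbatim over any o-minimal expansion of an ordered group.
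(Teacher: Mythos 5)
Your argument is correct in substance but takes a genuinely different route from the paper's. The paper avoids cell decomposition entirely: it inducts on $\dim(S)$, splits $S$ into $S\cap\overline{\partial S}$ (lower-dimensional by the frontier inequality, hence covered by the induction hypothesis) and $S_0=S\setminus\overline{\partial S}$, and then observes that $S_0$ minus the $s$-neighbourhood of the closed set $\overline{\partial S}$ is automatically closed --- not because $S_0$ is open in $R^n$ (it need not be), but because $\partial S_0\subseteq\overline{\partial S}$, so every missing limit point of $S_0$ is discarded together with that neighbourhood. You instead force openness by projecting each cell homeomorphically onto an open cell and measuring distance to the complement there; this works, and the pullback step you flag is handled by the standard fact that a definable continuous image of a closed and bounded set is closed and bounded (valid in o-minimal expansions of ordered groups), so $\pi^{-1}(K)$ is closed and bounded for closed bounded $K\subseteq U$ with no extra fibre truncation needed. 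The paper's frontier trick buys a shorter proof that bypasses cell decomposition and that image theorem; your route is more pedestrian but equally valid.

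One small correction: your family is oriented backwards. The definition of $D_\Sigma$ requires $S(s,t)\subseteq S(s',t')$ whenever $(s',t')\lle(s,t)$, i.e.\ the pieces must grow as $s\to 0^{+}$ and $t\to\infty$, whereas your $S(s,t)=\{x\in S:\|x\|\leq s,\ \operatorname{dist}(x,R^n\setminus S)\geq t\}$ grows as $s\to\infty$ and $t\to 0^{+}$; indeed you prove the inclusion $S(s',t')\subseteq S(s,t)$, which is the reverse of what is required. Swapping the roles of the two parameters, i.e.\ taking $\|x\|\leq t$ and $\operatorname{dist}(x,R^n\setminus S)\geq s$, repairs this.
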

\begin{proof}
Let $S$ denote a definable set. We proceed by induction on $\text{dim}(S)$.

If $S$ is closed, then define $S(s,t)=\overline{B(0,t)}\cap S$, for every $(s,t)\in\RRR$. Clearly every $S(s,t)$ is closed and bounded and $\cup_{s,t} S(s,t)=S$. Moreover, $S(s,t)\subseteq S(s',t')$ whenever $t\leq t'$. So $S$ is $D_\Sigma$. In particular, if $\dim(S)\leq 0$, then $S$ is $D_\Sigma$. 

Now suppose that $\dim(S)\geq 1$. By the above, we may assume that $S$ is not closed. Set $\overline{\partial S}:=cl(\partial S)$. By o-minimality, $\dim(\overline{\partial S})< \dim(S)$ and so $S \cap \overline{\partial S}$ is $D_\Sigma$ by induction. Let $S_0=S\setminus \overline{\partial S}$. Since the union of finitely many closed and bounded sets is closed and bounded, one may easily deduce that the union of finitely many $D_\Sigma$ sets is $D_\Sigma$. Hence to prove that $S$ is $D_\Sigma$ is suffices to show that $S_0$ is $D_\Sigma$. 

For every $s>0$, set 
$$\overline{\partial S}(s):=\bigcup\{B(x,s) : x\in \overline{\partial S}\}.$$
 Note that, since $\partial S_0 \subseteq cl(S)\setminus S_0=\overline{\partial S}$, for every $s>0$ it holds that $S_0\setminus \overline{\partial S}(s)=cl(S_0)\setminus \overline{\partial S}(s)$, meaning that $S_0\setminus \overline{\partial S}(s)$ is closed. For any $(s,t)\in\RRR$, let $S(s,t)=\overline{B(0,t)}\cap S_0 \setminus \overline{\partial S}(s)$. Every set $S(s,t)$ is closed and bounded and $S(s,t)\subseteq S(s',t')$ whenever $(s',t')\lle (s,t)$. Moreover, since $\overline{\partial S}$ is closed, for every $x\in S_0$ there exists $s>0$ such that $B(x,s)\cap  \overline{\partial S}=\emptyset$, so $x\notin \overline{\partial S}(s)$, and in particular $x\in S(s,t)$ for all sufficiently large $t>0$.
 Hence $\cup_{s,t} S(s,t)=S$. So $S_0$ is $D_\Sigma$.  
\end{proof}

We now complete the proof of Theorem~\ref{them_1st_countability}. 

\begin{proof}[Proof of Theorem~\ref{them_1st_countability}]
Let $(\Omega,\lleq)$ be a definable directed set. 
We construct a definable cofinal map $\gamma:(\RRR,\lle)\rightarrow (\Omega,\lleq)$. 

Clearly it is enough to prove the statement for any definable preorder definably preorder-isomorphic to $(\Omega,\lleq)$. Hence, by Corollary~\ref{cor_for_proof_main_them}, we may assume that any definable closed and bounded subset of $\Omega$ is $\lleq$-bounded. 

Lemma~\ref{lemma_D_sigma} yields a definable family $\{ \Omega(s,t) : (s,t)\in \RRR\}$ of closed and bounded sets such that $\Omega=\cup_{s,t} \Omega(s,t)$ and $\Omega(s,t)\subseteq \Omega(s',t')$ whenever $(s',t')\lle (s,t)$. By assumption on $\Omega$, every $\Omega(s,t)$ is $\lleq$-bounded. 
Applying definable choice let $\gamma: \RRR\rightarrow \Omega$ be a definable map satisfying $\gamma(s,t)\lleq \Omega(s,t)$, for every $(s,t)\in\RRR$ (if $\Omega(s,t)$ is empty then trivially any value $\gamma(s,t)\in \Omega$ will do).
 For every $x\in \Omega$, there exists $(s_x,t_x)\in \RRR$ such that $x\in \Omega(s_x,t_x)\subseteq \Omega(s,t)$, for all $(s,t) \lle (s_x,t_x)$. We conclude that $\gamma$ is cofinal.
\end{proof} 

Note that Theorem~\ref{them_1st_countability} implies that, if $(R,<)$ is separable, then every definable directed set has countable cofinality. In Proposition~\ref{prop_1st_countability} we show that this holds in the greater generality of any o-minimal structure.

\begin{remark}
Given a definable directed set $(\Omega,\lleq)$, one may ask whether or not the map $\gamma: (\RRR,\lle)\rightarrow (\Omega, \lle)$ given by Theorem~\ref{them_1st_countability} may always be chosen such that $\gamma(\RRR)$ is totally ordered by $\lleq$.
The answer is no. Consider the definable family $\{ (0,t)\cup (2t,3t) : t>0 \}$. Following Remark~\ref{remark_sets_order}(\ref{itm:remark_i}), set inclusion in this family defines a directed set $(R^{>0},\lleq)$, where $t \lleq t'$ iff $t=t'$ or $3t\leq t'$. 
It is easy to see that no infinite definable subset of $R^{>0}$ is totally ordered by $\lleq$. 
\end{remark}

Applying first-order compactness and definable choice in the usual fashion we may derive a uniform version of Theorem~\ref{them_1st_countability}.
We leave the details to the reader.

\begin{corollary}\label{cor_improvem_them}
Let $\{(\Omega_x,\lleq_x) : \Omega_x\subseteq R^m, x\in\Sigma\subseteq R^n\}$ be a definable family of downward directed sets. There exists a definable family of functions $\{\gamma_x:(\RRR,\lle)\rightarrow (\Omega_x,\lleq_x) :x\in\Sigma\}$ such that, for every $x\in\Sigma$, $\gamma_x$ is downward cofinal. 
\end{corollary}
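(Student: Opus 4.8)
The plan is to obtain Corollary~\ref{cor_improvem_them} from the single-instance statement of Theorem~\ref{them_1st_countability} by the standard combination of first-order compactness and definable choice, carried out uniformly in the parameter $x$. First I would observe that the property ``$\gamma$ is downward cofinal for $(\Omega_x,\lleq_x)$'' is a first-order condition on a tuple of parameters defining $\gamma$, provided $\gamma$ is presented as a definable function. More precisely, fix a formula $\theta(x, u, p, q, z)$ which, for each choice of a parameter tuple $p$, defines a map $\gamma^p_x : \RRR \to R^m$ by $\gamma^p_x(u) = $ the unique $z$ with $\models \theta(x,u,p,\cdot,z)$ appropriately; one then wants a $0$-definable family of such parameters $p = p(x)$. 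The key point is that Theorem~\ref{them_1st_countability}, applied in an arbitrary elementary extension $\RR' \succ \RR$ to the fibre $(\Omega_a, \lleq_a)$ over any $a$, guarantees that such a $p$ exists over $a$ (together with one fixed nonzero parameter, by the remark following the theorem). So the sentence asserting ``for all $x \in \Sigma$, there exists $p$ such that $\gamma^p_x$ is a well-defined downward cofinal map $\RRR \to \Omega_x$'' holds in every model, hence in $\RR$.

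The next step is to make this choice of $p(x)$ definable and uniform in $x$. Having established that for each $x \in \Sigma$ there is at least one good parameter $p$, I would invoke definable choice in $\RR$ (more precisely in $(\RR, b)$ for a fixed nonzero $b$, or simply note that $\RR$ expanding an ordered group has definable Skolem functions after naming one nonzero element) to select $p(x)$ definably as a function of $x$. Then $\gamma_x := \gamma^{p(x)}_x$ is the required definable family of functions, each of which is downward cofinal by construction. One has to be slightly careful that the formula $\theta$ is chosen so that the map $\RRR \to R^m$ it defines is everywhere single-valued for the chosen parameters; this is easily arranged, e.g. by taking the lexicographically least $z$ when there is a choice, or by restricting to the definable set of parameters $p$ for which $\theta$ does define a total function, which is itself a first-order condition.

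There is a subtlety worth flagging: the statement of Theorem~\ref{them_1st_countability} as proved does not come with an a priori bound on the complexity (number of parameters, quantifier depth) of the cofinal map $\gamma$, and the compactness argument requires fixing a single formula $\theta$ in advance. The resolution is that the proof of the theorem is itself uniform enough: tracing through Corollary~\ref{cor_for_proof_main_them}, Lemma~\ref{lemma_D_sigma}, and the final definable-choice step, the map $\gamma$ is obtained by a fixed sequence of definable operations applied to the defining data of $(\Omega, \lleq)$, so that running the same construction on the definable family $\{(\Omega_x, \lleq_x) : x \in \Sigma\}$ produces directly a definable family $\{\gamma_x\}$ without any appeal to compactness at all. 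Thus the cleanest route is arguably not compactness plus choice but simply the observation that the proof of Theorem~\ref{them_1st_countability} relativizes to families; the compactness phrasing is just a convenient black box that avoids re-examining each lemma for uniformity. Either way the details are routine, which is why they are left to the reader; the only genuine obstacle is the bookkeeping needed to confirm that every ingredient — the $D_\Sigma$ decomposition, the cell decompositions inside Lemma~\ref{lemma_f}, and the iterated applications of Lemma~\ref{lemma_doubling} — can be performed definably in the parameter $x$, which it can, since all of these are instances of definable choice and o-minimal cell decomposition with parameters.
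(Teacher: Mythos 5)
Your proposal is correct and follows exactly the route the paper intends: the paper gives no proof of Corollary~\ref{cor_improvem_them} beyond the remark that it follows ``by applying first-order compactness and definable choice in the usual fashion,'' and your compactness-plus-definable-choice argument (covering $\Sigma$ by the definable sets of parameters on which a fixed formula succeeds, then selecting parameters definably) is precisely that standard argument, with the single-formula issue you flag being the one genuine point that needs the covering/saturation step or, as you note, the observation that the construction in the proof of Theorem~\ref{them_1st_countability} relativizes uniformly to definable families.
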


We end this section with an example of an o-minimal structure in which Theorem~\ref{them_1st_countability} does not hold. For the rest of the section we drop the assumption that $\RR$ expands a group but keep the assumption that it is an o-minimal expansion of a dense linear order without endpoints. We will consider the property that any definable function from $R$ to itself is piecewise either constant or the identity (think of $\RR$ as having trivial definable closure, say; for example $\RR=(R,<)$ is a dense linear order without endpoints).

Theorem~\ref{them_1st_countability} tells us that, under the assumption that $\RR$ expands an ordered group, any definable upward directed set has a definable cofinal subset of dimension at most $2$ (and analogously for definable downward directed sets). 




For every $n > 0$, we prove, under the assumption of the above property on $\RR$, the existence of a definable upward directed set that admits no definable cofinal subset of dimension less than $n$. Thus Theorem~\ref{them_1st_countability} does not hold in general for o-minimal structures, even if we substitute $(\RRR,\lle)$ with any other definable preordered set.  

We begin with some notation. For any $a\in R$ and $n > 0$, let 
$$X(a,n) = \{(x_1,\ldots, x_n)\in R^n : a<x_1<\cdots<x_n \}.$$
Then $X(a,n)$ is definable and $n$-dimensional. We endow $X(a,n)$ with the definable lexicographic order and show that any definable cofinal subset of $X(a,n)$ has dimension $n$. Hence for the remainder of the section `cofinal' will refer to the lexicographic order. 

\begin{proposition}\label{prop:example}
Suppose that any definable map from $R$ to itself is piecewise constant or the identity. 
Then any cofinal definable subset of $X(a,n)$ is $n$-dimensional.
\end{proposition}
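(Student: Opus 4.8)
The plan is to establish, by induction on $n\geq 1$, the following statement: \emph{for every $a\in R$, every definable subset of $X(a,n)$ that is cofinal with respect to the lexicographic order has dimension $n$}. Since any subset of $R^n$ has dimension at most $n$, it suffices to prove the lower bound $\dim C\geq n$. The base case $n=1$ is immediate: $X(a,1)=(a,\infty)$ and the lexicographic order is just the usual order on $R$, so a cofinal definable subset of $(a,\infty)$ is unbounded above, and hence, by o-minimality, contains an interval $(b,\infty)$ and has dimension $1$.

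For the inductive step, fix $a$ and let $C\subseteq X(a,n)$ be definable and cofinal. For $t\in R$ write $C^t=\{x'\in R^{n-1}:(t,x')\in C\}$; note that $C^t\subseteq X(t,n-1)$ and that $\pi_1(C)=\{t:C^t\neq\emptyset\}$, where $\pi_1$ is the projection to the first coordinate. First I would check that $\pi_1(C)$ is unbounded above: given any $u_1>a$, pick $u\in X(a,n)$ with first coordinate $u_1$; cofinality of $C$ then yields $v\in C$ with $u\leq_{\mathrm{lex}}v$, so $v_1\geq u_1$. Being definable and unbounded above, $\pi_1(C)$ contains an interval $(b,\infty)$.

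The key step, and the point at which the hypothesis on $\RR$ enters, is to show that $C^t$ is cofinal in $X(t,n-1)$ for all sufficiently large $t$. Consider the definable set $U=\{t\in(b,\infty):C^t\text{ is not cofinal in }X(t,n-1)\}$, and suppose towards a contradiction that $U$ is unbounded above, hence (by o-minimality) contains an interval $(b',\infty)$. For $t\in(b',\infty)$, non-cofinality of $C^t$ gives some $z\in X(t,n-1)$ with $v<_{\mathrm{lex}}z$ for every $v\in C^t$; in particular $v_1\leq z_1$ for every such $v$, so the definable nonempty set $\{v_1:v\in C^t\}$ is bounded above, and I set $w(t)$ to be its supremum. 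Then $w:(b',\infty)\to R$ is definable (no appeal to definable choice is needed), and $w(t)>t$ since $w(t)\geq v_1>t$ for any $v\in C^t$. Applying the hypothesis, $w$ coincides on some interval $(b'',\infty)$ with a constant or with the identity; the identity is impossible because $w(t)>t$, and a constant $d$ is impossible because then $d=w(t)>t$ for all $t>\max(b'',d)$. This contradiction shows that $U$ is bounded above, so $C^t$ is cofinal in $X(t,n-1)$ for every $t$ in some interval $(b^*,\infty)$. This step is the main obstacle: a priori $C$ might be cofinal in the lexicographic order through its first coordinate alone, with most fibres $C^t$ failing to be cofinal in $X(t,n-1)$; ruling this out is precisely what fails without the hypothesis, since bounding these fibres uniformly would require a definable function exceeding the identity on an interval.

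Finally, I would conclude using dimension. By the inductive hypothesis, $\dim C^t=n-1$ for every $t\in(b^*,\infty)$. The definable set $S=\{(t,x')\in C:t>b^*\}$ is contained in $C$, its image under $\pi_1$ is $(b^*,\infty)$, and all of its fibres have dimension $n-1$; by the additivity of dimension for definable sets this gives $\dim S=1+(n-1)=n$. Hence $\dim C\geq\dim S=n$, which completes the induction and the proof.
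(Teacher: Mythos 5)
Your proof is correct and follows essentially the same route as the paper's: induction on $n$, the observation that cofinality forces the first-coordinate projection to be unbounded, and the key contradiction obtained from the definable function $t\mapsto\sup\pi_1(C^t)$ exceeding the identity. The only difference is organizational — you argue directly (bad fibres form a bounded set, then apply the inductive hypothesis and additivity of dimension), whereas the paper assumes $\dim Y<n$ and uses the fibre lemma to reach the same contradiction — but the ideas are identical.
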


Let $\pi$ denote the projection onto the first coordinate. We will make use of the following easily derivable fact:

\begin{fact}\label{fact_lex}
For any $m > 0$ and $b\in R$, a definable subset $Y\subseteq X(b,m)$ is cofinal if and only if $(r,\infty)\subseteq\pi(Y)$ for some $r>0$.   
\end{fact}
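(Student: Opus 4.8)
The plan is to prove the two implications separately, reducing lexicographic cofinality of $Y$ to the behaviour of its first coordinates. Throughout I write $u=(u_1,\ldots,u_m)$ for elements of $X(b,m)$ and use that $u\leq_{\mathrm{lex}}v$ holds exactly when either $u_1<v_1$, or $u_1=v_1$ and $(u_2,\ldots,u_m)\leq_{\mathrm{lex}}(v_2,\ldots,v_m)$; in particular $u_1<v_1$ already forces $u<_{\mathrm{lex}}v$. This single observation is what makes cofinality in $X(b,m)$ a statement about $\pi(Y)$ alone.

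For the easy direction, suppose $(r,\infty)\subseteq\pi(Y)$ for some $r>0$. Given an arbitrary $u\in X(b,m)$, I would choose some $c\in R$ with $c>u_1$ and $c>r$, which is possible since $<$ has no largest element. Then $c\in\pi(Y)$, so there is $v\in Y$ with $v_1=c>u_1$, whence $u<_{\mathrm{lex}}v$ and in particular $u\leq_{\mathrm{lex}}v$. As $u$ was arbitrary, $Y$ is cofinal.

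For the converse, the key step is to observe that cofinality forces $\pi(Y)$ to be unbounded above. Suppose instead $\pi(Y)$ were bounded above, say by $M$. Since $<$ is dense without endpoints I can build $u\in X(b,m)$ with $u_1>\max\{b,M\}$, by choosing $u_1$ above both $b$ and $M$ and then successively larger $u_1<u_2<\cdots<u_m$. Every $v\in Y$ then satisfies $v_1\leq M<u_1$, so $v<_{\mathrm{lex}}u$, contradicting the existence of some $v\in Y$ with $u\leq_{\mathrm{lex}}v$. Hence $\pi(Y)$ is unbounded above. Now $\pi(Y)$, being the projection of a definable set, is a definable subset of $R$, so by o-minimality it is a finite union of points and intervals with endpoints in $R\cup\{\pm\infty\}$; were every such interval to have a finite right endpoint, $\pi(Y)$ would be bounded above, so one interval must have right endpoint $+\infty$, giving $(c,\infty)\subseteq\pi(Y)$ for some $c\in R$. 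Since $(r,\infty)\subseteq(c,\infty)$ whenever $r\geq c$, we may take $r$ as large as we wish, in particular $r>0$, which completes this direction.

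I do not anticipate a serious obstacle: the only genuinely mathematical ingredient is the o-minimality fact in the last paragraph, namely that an unbounded-above definable subset of $R$ contains a final ray, combined with the elementary fact that the lexicographic order is governed by the first coordinate. The one point requiring care is to ensure the witnessing tuples $u$ and $v$ actually lie in $X(b,m)$ (i.e.\ form strictly increasing chains above $b$), which is exactly what density and the absence of endpoints guarantee when constructing $u_1<\cdots<u_m$.
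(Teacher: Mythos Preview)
Your argument is correct. The paper does not actually supply a proof of this fact; it simply labels it ``easily derivable'' and proceeds to use it in the proof of Proposition~\ref{prop:example}. Your two-direction argument---reducing lexicographic cofinality to unboundedness of $\pi(Y)$ via the observation that $u_1<v_1$ already forces $u<_{\mathrm{lex}}v$, and then invoking o-minimality to extract a final ray from an unbounded definable subset of $R$---is exactly the natural elaboration the authors have in mind, and it goes through without issue in the weakened setting of this part of the section (where $\RR$ is only assumed to be an o-minimal expansion of a dense linear order without endpoints).
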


We now prove Proposition~\ref{prop:example}.

\begin{proof}
We proceed by induction on $n$.

Fix $a\in R$ and $n > 0$ and let $X=X(a,n)$. If $n=1$ then the result is trivial. For the inductive step suppose that $n>1$. We assume that there exists a definable cofinal subset $Y\subset X$ with $\dim Y <n$ and arrive at a contradiction. 

Fact~\ref{fact_lex} implies that $Y_t=\{x\in R^{n-1} : (t,x)\in Y\}$ must be nonempty for sufficiently large $t>0$. 
As $\dim Y <n$, the fiber lemma for o-minimal dimension (Proposition $1.5$ Chapter $4$ in~\cite{dries98}) implies the existence of some $t_0>0$ such that $0\leq \dim Y_t < n-1$, for all $t>t_0$. 
Now note that, for every $t>a$, the fiber $X_t$ is the space $X(t,n-1)$. 
Applying induction we conclude that $Y_t$ is not cofinal in $X_t$ for any $t>t_0$.
 By Fact~\ref{fact_lex} and the fact that $Y_t$ is nonempty, it follows that $\pi(Y_t)$ has a supremum, which clearly must be greater than $t$. Consider the definable map $f:(t_0,\infty)\rightarrow R$ given by $t\mapsto \sup \pi(Y_t)$. This map is well defined and, for every $t$, $f(t)>t$. This contradicts the fact that $f$ is piecewise constant or the identity. 
\end{proof}

\section{A strengthening of the main result for expansions of ordered fields}\label{section_after_main_result}

We recall that a pole is a definable bijection $f : I \to J$ between a bounded interval $I$ and an unbounded interval $J$.
Edmundo proved the following in~\cite{edmundo00} (see Fact 1.6).
 
\begin{fact}\label{fact:edmundo}
The structure $\RR$ has a pole if and only if there exist definable functions $\oplus, \otimes : R^2 \to R$ such that $(R,<,\oplus, \otimes)$ is a real closed ordered field.
\end{fact}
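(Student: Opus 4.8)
The statement to be proved is Edmundo's characterisation: $\RR$ has a pole if and only if there are definable operations making $(R,<)$ into a real closed ordered field. The plan is to prove the two directions separately, with the left-to-right direction carrying essentially all the content.

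For the easy direction, suppose $(R,<,\oplus,\otimes)$ is a real closed ordered field definable in $\RR$. Then using field arithmetic one directly writes down a pole: the map $x \mapsto x^{-1}$ (multiplicative inverse with respect to $\otimes$) restricted to a bounded interval of positive elements, say $(0,1)$, is a definable bijection onto an unbounded interval $(1,\infty)$. Since $\oplus$ and $\otimes$ are definable, so is this map, giving a pole. (One should note the mild subtlety that the additive identity of the field need not be the distinguished constant $0$ of the group, and the field order agrees with $<$ by fiat, so one really is producing an $\RR$-definable bijection between a $<$-bounded and a $<$-unbounded interval; this is routine.)

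For the hard direction, assume $\RR$ has a pole $f : I \to J$. The goal is to build definable field operations on $R$. First I would use the pole, together with the group structure $(R,0,+,<)$ that $\RR$ is assumed to expand and o-minimality (monotonicity, cell decomposition), to manufacture from $f$ a definable bijection between $R$ and a bounded interval, and hence to transport structure freely between bounded and unbounded definable sets. The core step is then to define multiplication. The standard approach, going back to the trichotomy-style arguments, is: fix a bounded interval, use the pole to get an "archimedean-like" scaling, and define $x \otimes y$ by a geometric/algebraic construction (for instance via the classical device of encoding multiplication through the graph of a parabola or through similar triangles, or, as in Edmundo's original argument, by first producing a definable field on an interval and then spreading it to all of $R$ using the additive group to tile $R$ by translates of that interval). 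One verifies the field axioms using o-minimality to rule out pathologies (e.g. that the constructed operation is total, associative, distributive), and real closedness follows because any o-minimal ordered field is real closed — definable functions satisfy the intermediate value property, so odd-degree polynomials have roots and positive elements have square roots.

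The main obstacle is the construction and verification of a definable \emph{multiplication}: addition comes for free from the group, and the pole gives a way to pass between bounded and unbounded scales, but assembling these into a globally defined, associative, distributive operation — and checking it is compatible with the order — is the substantive part. In practice one would lean on Edmundo's construction rather than reproduce it; since the paper only needs this as a cited Fact, I would present the easy direction in a line or two and simply invoke~\cite{edmundo00} for the converse, which is exactly what the excerpt does.
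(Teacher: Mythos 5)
Your proposal is correct and matches the paper's treatment: the paper states this as a Fact and simply cites Edmundo~\cite{edmundo00} (Fact 1.6) rather than proving it, which is exactly what you conclude is the appropriate course. Your sketch of the easy direction (the definable multiplicative inverse on a bounded interval of positive elements gives a pole) and your deferral of the substantive converse to Edmundo's construction are both sound.
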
 

The following corollary provides a condition on the structure $\RR$ under which Theorem~\ref{them_1st_countability} can be strengthened to state that every definable directed set admits a definable cofinal curve. From now on, we write that $\RR$ \textquotedblleft expands an ordered field" when the conclusion of Fact~\ref{fact:edmundo} holds.



\begin{corollary}\label{cor_cofinal_curves}
The structure $\RR$ has a pole if and only if every definable downward (respectively upward) directed set $(\Omega,\lleq)$ admits a definable downward (respectively upward) cofinal curve. 
\end{corollary}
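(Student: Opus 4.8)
The plan is to prove the two implications separately, the forward direction being the substantive one. For the backward direction, suppose that every definable downward directed set admits a definable downward cofinal curve. We want to produce a pole, so by Fact~\ref{fact:edmundo} it suffices to show $\RR$ has a pole (or to argue by contraposition). If $\RR$ has no pole, then every definable set is bounded — more precisely, by the structure theory of o-minimal expansions of ordered groups without poles (the ``linear'' case, cf.\ Edmundo and Loveys--Van den Dries), definable unary functions have linear growth. The idea is to exhibit a definable downward directed set whose only downward cofinal maps from $\dom$ are, up to the directedness, essentially indexed by two parameters and cannot be collapsed to one. A clean choice is to reuse the mechanism behind Proposition~\ref{prop:example}: in a structure with no pole one can often find a definable directed set requiring a two-dimensional (rather than one-dimensional) cofinal subset, and the image of a curve is one-dimensional. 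Concretely, take the downward directed set $(\RRR,\lle)$ itself, or a variant thereof: a downward cofinal curve $\gamma:\dom\to\RRR$ would have to satisfy that for every $(s,t)$ there is $\tau>0$ with $\gamma((0,\tau))\lle (s,t)$, i.e.\ the first coordinate of $\gamma$ tends to $+\infty$ and the second to $0$ as $r\to 0^+$; and then its graph is a one-dimensional definable subset of $\RRR$ that is $\lle$-cofinal (downward), which forces, via monotonicity and o-minimality, a definable bijection between an interval near $0$ and an unbounded interval — a pole. This is where the ``expands an ordered field'' hypothesis is exactly what is needed.

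For the forward direction, assume $\RR$ has a pole, hence (Fact~\ref{fact:edmundo}) expands a real closed field. Let $(\Omega,\lleq)$ be a definable downward directed set. By Theorem~\ref{them_1st_countability} there is a definable downward cofinal map $\gamma:(\RRR,\lle)\to(\Omega,\lleq)$. It now suffices to precompose $\gamma$ with a definable map $\delta:(\dom,\leq)\to(\RRR,\lle)$ that is itself downward cofinal, since the composition of downward cofinal maps is downward cofinal (this is immediate from the definition). So the whole problem reduces to: \emph{in an o-minimal expansion of an ordered field, the directed set $(\RRR,\lle)$ admits a downward cofinal curve.} But in the field case this is trivial: define $\delta(r) = (1/r,\, r)$. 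Then as $r\to 0^+$ the first coordinate $\to+\infty$ and the second $\to 0^+$, and given any target $(s,t)\in\RRR$, for all $r < \min\{1/s, t\}$ we have $1/r > s$ and $r < t$, i.e.\ $\delta(r)\lle (s,t)$; hence $\delta$ is downward cofinal. (Here division is available precisely because $\RR$ expands a field; this is the only place the hypothesis is used in this direction, and it is why a pole — equivalently a field structure — is necessary.) Composing, $\gamma\circ\delta:\dom\to\Omega$ is the desired definable downward cofinal curve, and the upward case follows by duality as usual.

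The main obstacle is the backward (necessity) direction: turning the failure of the pole hypothesis into an obstruction to cofinal curves. The cleanest route is to lean on the linearity of o-minimal expansions of ordered groups without poles, under which every definable function $R\to R$ is piecewise of the form $x\mapsto \pm x + c$ (Loveys--Van den Dries / Edmundo), and then run an argument parallel to Proposition~\ref{prop:example}: build a definable downward directed set — for instance the one induced by a family like $\{(0,t)\cup(2t,3t):t>0\}$ or a two-parameter analogue — for which a downward cofinal curve would yield a definable unary function escaping the piecewise-affine constraint, e.g.\ a function $f$ with $f(t)>t$ that is forced to be eventually affine of slope $1$, hence bounded distance from the identity, contradicting an unbounded-gap requirement built into the family. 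I would structure this as: (i) reduce ``admits a downward cofinal curve'' for the chosen $\Omega$ to a statement about the projection of the curve's graph, as in Fact~\ref{fact_lex}; (ii) invoke linearity to constrain that projection; (iii) derive the contradiction. One must take some care that the chosen $\Omega$ is genuinely definable in the group reduct and that ``curve'' is understood in the paper's sense (behaviour as $t\to 0^+$), but these are bookkeeping points rather than genuine difficulties.
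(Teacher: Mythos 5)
Your overall architecture is the paper's: reduce via Theorem~\ref{them_1st_countability} to the single question of whether $(\RRR,\lle)$ admits a definable downward cofinal curve, handle the forward direction by writing such a curve down, and handle the backward direction by extracting a pole from such a curve. Two points need attention. First, your explicit curve has its coordinates transposed: with the paper's convention $(s,t)\lle(s',t')\Leftrightarrow s\leq s'$ and $t\geq t'$, a downward cofinal curve into $(\RRR,\lle)$ must have first coordinate tending to $0$ and second tending to $+\infty$ as the parameter tends to $0^+$, so the map should be $r\mapsto(r,1/r)$ rather than $r\mapsto(1/r,r)$ (as written, your $\delta$ is upward, not downward, cofinal). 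There is also a small subtlety you elide: the definable field supplied by Fact~\ref{fact:edmundo} need not have the same zero element as the group, so ``$1/r$'' should really be a suitable translate of the field inverse; the paper sidesteps this by building the second coordinate directly from the pole $f$ itself, normalising $f$ to blow up at $0^+$ and taking $t\mapsto\max\{f(t),a\}$.

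Second, the backward direction. Your direct argument --- a definable cofinal curve into $(\RRR,\lle)$ forces a coordinate function on some interval $(0,\tau)$ to tend to $+\infty$, and o-minimal monotonicity then yields a definable bijection from a bounded interval onto an unbounded one, i.e.\ a pole --- is exactly what the paper does (it takes $\mu(s)=\max\Gamma_s$ on the image $\Gamma$ of the curve and observes $\mu(s)\to\infty$ as $s\to 0^+$), and it is complete as it stands. The surrounding material about linearity should simply be dropped: the claim that the absence of a pole makes every definable set bounded is false ($R$ itself is unbounded), and the mechanism of Proposition~\ref{prop:example} does not transfer, since its hypothesis (every definable unary function is piecewise constant or the identity) is far stronger than the absence of a pole --- an ordered-vector-space-like reduct without poles has many definable unary functions that are neither. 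None of that is needed; the direct extraction of a pole from the curve is both correct and the paper's route.
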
 

\begin{proof}
Let $(\Omega,\lleq)$ be a definable downward (respectively upward) directed set. If $\gamma_0:(\dom,\leq)\rightarrow (\RRR,\lle)$ is a downward cofinal curve and $\gamma_1:(\RRR,\lle)\rightarrow (\Omega,\lleq)$ is a downward (respectively upward) cofinal map then $\gamma=\gamma_1 \circ \gamma_0$ is a downward (respectively upward) cofinal curve in $(\Omega,\lleq)$. Since such a downward (respectively upward) cofinal map $\gamma_1$ exists by Theorem~\ref{them_1st_countability}, in order to prove the corollary it suffices to show that $\RR$ has a pole if and only if $(\RRR,\lle)$ admits a definable downward cofinal curve. As usual, from now on throughout the proof we omit the word downward as there is no room for confusion. 

Suppose that $\RR$ contains a pole $f:I \rightarrow J$, with $I$ a bounded interval and $J$ an unbounded interval in $R$.
By o-minimality, we may assume that $f$ is strictly monotonic. 
By transforming $f$ if necessary by means of the group operation, 
we may assume that $I=(0,r)$ for some $r>0$ and that $\lim_{t\rightarrow 0^+} f(t)=\infty$.
Continue $f$ to $(0,\infty)$ by setting $f$ to be zero on $[r,\infty)$.
For some fixed $a>0$, define $f_a \colon (0,\infty) \to R$ by $f_{a}(t) = \max \{ f(t), a \}$.
The curve $\gamma$ given by $\gamma(t) = (t, f_a(t))$ for all $t>0$ is clearly a definable cofinal curve in $(\RRR,\lle)$.

Conversely, suppose that $\gamma$ is a definable cofinal curve in $(\RRR,\lle)$ and let $\Gamma=\gamma[\dom]$. 
Let $\pi:R^2\rightarrow R$ denote the projection onto the first coordinate. Since $\gamma$ is cofinal, we have $(0,s_1)\subseteq \pi(\Gamma)$ for some $s_1 > 0$. Since $\Gamma$ is one-dimensional, by the fiber lemma for o-minimal dimension there exists $s_2>0$ such that, for every $0<s<s_2$, the fiber $\Gamma_s$ is finite. Let $s_0=\min\{s_1, s_2\}$ and consider the definable map $\mu:(0,s_0)\rightarrow R$ given by $\mu(s)=\max \Gamma_s$. This map is well defined. Since $\gamma$ is cofinal for every $s',t'>0$, there exists $s,t>0$, with $s\leq s'$ and $t\geq t'$, such that $(s,t)\in \Gamma$. It follows that $\lim_{s\rightarrow 0^+} \mu(s)=\infty$. By o-minimality, there exists some interval $(0,r)\subseteq (0,s_0)$ such that $\mu|_{(0,r)} : (0,r)\rightarrow (\mu(r),\infty)$ is a pole.
\end{proof}
\begin{remark} 
The proof above shows that, if there exists a definable one-dimensional preordered set $(\Sigma,\lleq_\Sigma)$ and a definable downward cofinal map $\gamma_{\Sigma}:(\Sigma,\lleq_\Sigma)\rightarrow (\RRR,\lle)$, then $\RR$ has a pole. Hence Theorem~\ref{them_1st_countability} cannot be \textquotedblleft improved" by putting a one-dimensional space in place of $(\RRR,\lle)$, unless we assume the existence of a pole in the structure, in which case by the above corollary $(\Sigma,\lleq_\Sigma)$ can always be taken to be $(\dom, \leq)$.   
\end{remark}


By Fact~\ref{fact:edmundo}, the following is an immediate consequence of Corollary~\ref{cor_cofinal_curves}. 

\begin{corollary}\label{cor_cofinal_curves_2}
Every definable downward (respectively upward) directed set $(\Omega,\lleq)$ admits a definable downward (respectively upward) cofinal curve if and only if $\RR$ expands an ordered field.
\end{corollary}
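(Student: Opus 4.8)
The plan is to observe that Corollary~\ref{cor_cofinal_curves_2} is a direct consequence of Corollary~\ref{cor_cofinal_curves} once we reconcile the hypothesis ``$\RR$ has a pole'' with the hypothesis ``$\RR$ expands an ordered field'' in the sense adopted just before the statement of Corollary~\ref{cor_cofinal_curves}. By Fact~\ref{fact:edmundo}, the existence of a pole in $\RR$ is equivalent to the existence of definable operations $\oplus, \otimes$ making $(R, <, \oplus, \otimes)$ a real closed ordered field; and, following the convention fixed in the text, this latter condition is precisely what we mean when we say that $\RR$ ``expands an ordered field''. So the two hypotheses coincide verbatim.

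Concretely, I would write: by the convention introduced after Fact~\ref{fact:edmundo}, the statement ``$\RR$ expands an ordered field'' holds if and only if the conclusion of Fact~\ref{fact:edmundo} holds, which by Fact~\ref{fact:edmundo} itself is equivalent to ``$\RR$ has a pole''. Substituting this equivalence into both directions of the biconditional in Corollary~\ref{cor_cofinal_curves} immediately yields: every definable downward (respectively upward) directed set $(\Omega, \lleq)$ admits a definable downward (respectively upward) cofinal curve if and only if $\RR$ expands an ordered field. This completes the proof.

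Since this is essentially a one-line deduction, there is no real obstacle; the only thing to be careful about is making the chain of equivalences explicit (pole $\iff$ field structure by Fact~\ref{fact:edmundo} $\iff$ ``expands an ordered field'' by the stated convention) so that the reader sees exactly why Corollary~\ref{cor_cofinal_curves_2} is not merely a restatement but a translation of Corollary~\ref{cor_cofinal_curves} into the terminology that will be used in the sequel. One might additionally remark that the ``respectively'' clauses carry over unchanged because Corollary~\ref{cor_cofinal_curves} already handles both the downward and upward cases in parallel, so no separate argument for the upward case is needed here.

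I expect the proof in the paper to be, as the excerpt itself says, ``an immediate consequence'' — likely just the sentence invoking Fact~\ref{fact:edmundo} and Corollary~\ref{cor_cofinal_curves} — and my proposal matches that: the substance was all in establishing Corollary~\ref{cor_cofinal_curves}, whose hard direction (deriving a pole from a cofinal curve in $(\RRR, \lle)$, via the map $\mu(s) = \max \Gamma_s$ and the fiber lemma for o-minimal dimension) is already done. Nothing further is required beyond unwinding definitions.
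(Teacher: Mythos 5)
Your proposal is correct and matches the paper exactly: the paper derives Corollary~\ref{cor_cofinal_curves_2} as an immediate consequence of Corollary~\ref{cor_cofinal_curves} via Fact~\ref{fact:edmundo} and the stated convention on ``expands an ordered field''. Nothing further is needed.
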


\section{Tame Extensions}\label{section_tame_pairs} 

In this section we use the theory of tame pairs as well as results from previous sections to obtain results about definable families of sets with the FIP.
 We conclude with a strengthening of Corollary~\ref{cor_cofinal_curves_2} in the case that $\RR$ expands an archimedean field (Corollary~\ref{cor_main_result_archimedean_field_case}). 
In this section all types are considered to be over $R$, i.e. complete type means complete as a type over $R$. 

We first fix some conventions. Let $\SSS$ be a family of definable subsets of $R^n$ with the FIP.
For every $S\in \SSS$, let $\phi_S(v,u_S)$ be a formula with parameters $u_S$ such that 
\[
S=\{v\in R^n : \RR\models \phi_S(v,u_S)\}.
\]
Then $\SSS$ may be seen as the $n$-type $p_\SSS(v)$ given by 
\[
p_\SSS(v)=\{ \phi_S(v,u_S) : S\in\SSS\}.
\] 
Conversely any type can be identified with a family of definable sets with the FIP and, in this sense, a complete $n$-type is a maximal family of definable subsets of $R^n$ with the FIP (i.e. an ultrafilter in the boolean algebra of definable subsets of $R^n$). In this section we treat $n$-types interchangeably as consistent families of formulas (with parameters) with $n$ free variables and as families of definable subsets of $R^n$ with the FIP.

A tame extension $\RR^*=(R^*,\ldots)$ of $\RR$ is a proper elementary extension of $\RR$ such that  $\{s\in R : s<t\}$ has a supremum in $R \cup \{-\infty, +\infty\}$ (i.e. the cut is definable) for every $t\in R^*$.
Note that, if $\RR$ expands the ordered set of the reals, then every proper elementary extension of $\RR$ is tame. 

An $n$-type $p(v)$ is definable if and only if, for every $(n+m)$-formula $\phi(v,u)$, the set $\{u\in R^m : \phi(v,u)\in p(v)\}$ is definable. 

Let $\xi$ be a positive element in an elementary extension of $\RR$ which is less than every positive element of $R$ (i.e. $\xi$ is infinitesimal with respect to $R$).
 Then $\RR(\xi) = (R(\xi),\ldots)$, the prime model over $R \cup \{\xi\}$, is a tame extension of $\RR$. 
Every tuple of elements in $R(\xi)$ is of the form $\gamma(\xi)$ for some curve $\gamma$ definable in $\RR$. 
By o-minimality, it holds that, for every $(n+m)$-formula $\phi(v,u)$ and every $u\in R^m$, $\RR(\xi)\models \phi(\gamma(\xi),u)$ if and only if $\RR\models \phi(\gamma(t),u)$ for all sufficiently small $t>0$. Hence every type realized in $\RR(\xi)$ is definable.  

The following is due to Marker and Steinhorn~\cite{mark_stein_94} (Theorem 2.1):
\begin{fact}\label{fact:MS}
If $\RR^*$ is a tame extension of $\RR$ and $v\in (R^*)^n$ then the type of $v$ over $R$ is definable. 
\end{fact}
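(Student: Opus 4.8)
The plan is to argue by induction on $n$, the length of the tuple $v$, with tameness doing the work in the base case and the inductive step being the genuinely delicate point. By the criterion for definability of a type recalled above, it suffices to show that for every $\RR$-formula $\phi(v;\bar u)$, where $\bar u=(u_1,\dots,u_m)$, the set $\phi(p):=\{\bar a\in R^m:\RR^*\models\phi(v;\bar a)\}$ is $\RR$-definable, where $p=\mathrm{tp}(v/R)$ is the type of $v$ over $R$; for $\bar a\in R^m$ the condition $\RR^*\models\phi(v;\bar a)$ is of course just $\phi(v,\bar a)\in p$.

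For the base case $n=1$ I would exploit tameness as follows. Writing $t:=v_1$ and $c:=\sup\{s\in R:s<t\}\in R\cup\{\pm\infty\}$, the set $\{s\in R:s<t\}$ is a downward closed definable subset of $R$ with supremum $c$, hence equals $(-\infty,c)$ or $(-\infty,c]$ (reading $(-\infty,+\infty)$ as $R$ and $(-\infty,-\infty)$ as $\emptyset$); in particular each of $\{s\in R:s<t\}$, $\{s\in R:s>t\}$ and $\{s\in R:s=t\}$ is $\RR$-definable. I would then apply the Cell Decomposition Theorem~\cite{dries98} in $\RR$ to $\phi(w;\bar u)$, with $w$ as the distinguished variable, to obtain a partition of $R^m$ into $\RR$-definable cells $C_1,\dots,C_r$ and, over each $C_j$, finitely many $\RR$-definable continuous functions $\zeta_{j,1}<\dots<\zeta_{j,k_j}\colon C_j\to R$ presenting the fibers of $\phi$ as unions of their graphs and of the bands between consecutive ones, with the combinatorial pattern depending only on $j$; by elementarity the same holds in $\RR^*$. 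For $\bar a\in R^m$, the statement $\bar a\in\phi(p)$ then unwinds into a Boolean combination of conditions of the form $\bar a\in C_j$ (trivially $\RR$-definable) and order comparisons $v_1<\zeta_{j,l}(\bar a)$, $v_1=\zeta_{j,l}(\bar a)$, $v_1>\zeta_{j,l}(\bar a)$; since each $\zeta_{j,l}$ is $\RR$-definable and $R$-valued on $R$-points, each such comparison is the $\zeta_{j,l}$-preimage of one of the three $\RR$-definable cut sets above, so $\phi(p)$ is $\RR$-definable.

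For the inductive step I would write $v=(v',v_{n+1})$ with $v'=(v_1,\dots,v_n)$, so that $p':=\mathrm{tp}(v'/R)$ is definable by the inductive hypothesis, and repeat the cell-decomposition argument with $v_{n+1}$ as the distinguished variable. The Cell Decomposition Theorem applied to $\phi(\bar z,w;\bar u)$ yields $\RR$-definable cells $D_j\subseteq R^{n+m}$ in the coordinates $(\bar z,\bar u)$ and $\RR$-definable continuous functions $\eta_{j,l}\colon D_j\to R$, and membership of $\bar a\in R^m$ in $\phi(p)$ becomes a Boolean combination of conditions $(v',\bar a)\in D_j$ --- which are $\RR$-definable in $\bar a$ precisely because $p'$ is definable --- together with comparisons $v_{n+1}<\eta_{j,l}(v',\bar a)$, $v_{n+1}=\eta_{j,l}(v',\bar a)$, $v_{n+1}>\eta_{j,l}(v',\bar a)$. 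Thus the whole statement reduces to showing that, for an $\RR$-definable continuous $\eta\colon R^{n+m}\to R$, the set $\{\bar a\in R^m:\RR^*\models v_{n+1}<\eta(v',\bar a)\}$, and the analogous sets with $=$ and $>$, are $\RR$-definable.

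I expect this last reduction to be the main obstacle. One cannot simply reapply the case $n=1$, since $v_{n+1}<\eta(v',\bar a)$ is itself a condition in the very type $\mathrm{tp}(v'v_{n+1}/R)=p$ we are computing; the real difficulty --- already visible when $\eta(v',\bar a)$ happens to lie in the same $R$-infinitesimal cut as $v_{n+1}$ --- is to control the joint behaviour of $v_{n+1}$ together with the $R$-definable function-germs of the tuple $v'$, rather than that of $v_{n+1}$ in isolation. The resolution, which is the substance of Marker and Steinhorn's proof, once again uses that the cut of $v_{n+1}$ over $R$ is definable (from tameness), now combined with the full strength of the definability of $p'$, via a further case analysis --- obtained by a cell decomposition in the $\bar z$-variables --- on the position of $\eta(v',\bar a)$ relative to that cut. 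Granting this, the Boolean combination described above is $\RR$-definable, completing the induction and with it the proof.
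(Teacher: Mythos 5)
The paper itself offers no proof of this statement: it is imported as a known result, with a citation to Marker and Steinhorn~\cite{mark_stein_94} (Theorem 2.1), so there is nothing internal to compare against and your text has to stand on its own as a proof of the Marker--Steinhorn theorem. It does not. The framework and the base case $n=1$ are sound: tameness makes the three cut sets of $v_1$ over $R$ definable, and cell decomposition of $\phi$ with distinguished variable $w$ expresses $\{\bar a:\phi(v_1,\bar a)\in p\}$ as a Boolean combination of the cells $C_j$ and of preimages of those cut sets under the boundary functions $\zeta_{j,l}$, which are $R$-valued at $R$-points. The inductive step, however, halts exactly where the theorem has its content. After your reduction one must decide, definably in $\bar a\in R^m$, whether $v_{n+1}<\eta(v',\bar a)$, and now $\eta(v',\bar a)$ lies in $R^*$ rather than in $R$. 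Tameness only controls comparisons of $v_{n+1}$ with elements of $R$; two elements of $R^*$ realizing the same cut over $R$ may lie on either side of one another, and the set $\{\bar a\in R^m: v_{n+1}<\eta(v',\bar a)\}$ is precisely the trace on $R^m$ of an $\RR^*$-definable set with parameters $(v',v_{n+1})$ --- the assertion that such traces are $\RR$-definable is a restatement of the theorem for the formula at hand. Writing ``Granting this\dots'' and deferring to ``the substance of Marker and Steinhorn's proof'' is therefore assuming what is to be proved.

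Note also that the most tempting repair --- show that $\mathrm{dcl}(R\cup\{v'\})$ is itself tame in $\RR^*$, so that the cut of $v_{n+1}$ over it is definable there, and then push that definition down to $R$ using the definability of $\text{tp}(v'/R)$ --- is not available. Already for $R=\mathbb{R}$ (where every elementary extension is tame and the theorem holds) one can take $v'=t$ infinite and $v_{n+1}=c$ realizing the cut separating the finite from the infinite elements of the real closure of $\mathbb{R}(t)$: that cut has no supremum in $\mathrm{dcl}(\mathbb{R}\cup\{t\})$, yet $\text{tp}(t,c/\mathbb{R})$ is definable. So the inductive step must analyse the cut that the $R^m$-indexed family $\bar a\mapsto\eta(v',\bar a)$ traces against $v_{n+1}$ directly, combining tameness with the induction hypothesis in a genuinely separate argument; this is carried out in~\cite{mark_stein_94} and, differently, in later simplified proofs. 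As written, your proposal is an accurate outline of the strategy together with an acknowledgement of the missing lemma, not a proof.
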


Given a tame extension $\RR^*$ of $\RR$, a tame pair $(\RR^*,R)$ is the expansion of $\RR^*$ by a unary predicate defining $R$. Van den Dries and Lewenberg proved in~\cite{dries_lewenberg_95} that, if $\RR$ expands an ordered field, then the theory of such tame pairs is complete. In other words, suppose that $\RR$ expands an ordered field and let $T$ be the theory of $\RR$. Then, if $\RR_0$, $\RR_0^*$, $\RR_1$ and $\RR_1^*$ are models of $T$ such that $\RR_0^*$ is a tame extension of $\RR_0$ and $\RR_1^*$ is a tame extension of $\RR_1$, then the tame pairs $(\RR_0^*,R_0)$ and $(\RR_1^*,R_1)$ are elementarily equivalent. 

Given two families $\SSS_0$ and $\SSS_1$ of sets we say that $\SSS_1$ is finer than $\SSS_0$ if every element of $\SSS_0$ contains an element of $\SSS_1$. The main result of this section is the following. 

\begin{theorem}\label{them_tame_pairs}
Suppose that $\RR$ expands an ordered field, $\Omega \subseteq R^m$, and $\SSS=\{S_u : u\in \Omega\}$ is a definable family of subsets of $R^n$ with the FIP. The following are equivalent. 
\begin{enumerate}[(1)]
\item \label{itm:tame_pairs_1} $\SSS$ can be extended to a complete definable type in $S_n(R)$.
\item \label{itm:tame_pairs_2} There exists a definable downward directed family $\SSS'$ that is finer than $\SSS$.  
\item \label{itm:tame_pairs_3} There exists a definable curve $\gamma:\dom \rightarrow \cup\SSS$ such that, for every $u\in\Omega$, $\gamma(t) \in S_u$ for sufficiently small $t > 0$. 
\end{enumerate}
\end{theorem}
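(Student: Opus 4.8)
The plan is to prove the implications $(\ref{itm:tame_pairs_3})\Rightarrow(\ref{itm:tame_pairs_1})$, $(\ref{itm:tame_pairs_3})\Rightarrow(\ref{itm:tame_pairs_2})$, $(\ref{itm:tame_pairs_2})\Rightarrow(\ref{itm:tame_pairs_3})$ and $(\ref{itm:tame_pairs_1})\Rightarrow(\ref{itm:tame_pairs_3})$, so that the three conditions are all equivalent to the existence of the curve in $(\ref{itm:tame_pairs_3})$; three of these are soft. For $(\ref{itm:tame_pairs_3})\Rightarrow(\ref{itm:tame_pairs_1})$, fix $\xi$ infinitesimal over $R$ and consider $\gamma(\xi)\in R(\xi)^n$. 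Since $\RR(\xi)$ is a tame extension of $\RR$, the type of $\gamma(\xi)$ over $R$ is complete and definable by Fact~\ref{fact:MS}; and for each $u\in\Omega$ the hypothesis that $\gamma(t)\in S_u$ for all small $t>0$ gives $\RR(\xi)\models\phi(u,\gamma(\xi))$, so this type extends $\SSS$.

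For $(\ref{itm:tame_pairs_3})\Rightarrow(\ref{itm:tame_pairs_2})$ I would take $\SSS'$ to be the definable family of tails $\{\,\{\gamma(t):0<t<s\}:s>0\,\}$: it omits $\emptyset$, it is downward directed because the tail at $\min\{s_1,\dots,s_k\}$ is contained in every tail at $s_i$, and it is finer than $\SSS$ since each $S_u$ contains a whole tail by the defining property of $\gamma$. For $(\ref{itm:tame_pairs_2})\Rightarrow(\ref{itm:tame_pairs_3})$, write $\SSS'=\{S'_w:w\in\Omega'\}$ and apply Corollary~\ref{cor_cofinal_curves_2} (this is where the field hypothesis enters these two implications) to the induced directed set $(\Omega',\lleq_{\SSS'})$, obtaining a definable downward cofinal curve $\delta:\dom\to\Omega'$; by definable choice pick $\gamma(t)\in S'_{\delta(t)}$, which is legitimate as $\emptyset\notin\SSS'$. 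Given $u\in\Omega$, fineness yields $w$ with $S'_w\subseteq S_u$, and cofinality of $\delta$ yields $s>0$ with $S'_{\delta(t)}\subseteq S'_w$ for $0<t<s$; hence $\gamma(t)\in S_u$ for $0<t<s$, and $\gamma(t)\in\bigcup\SSS$ for every $t$.

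The main work, and the expected obstacle, is $(\ref{itm:tame_pairs_1})\Rightarrow(\ref{itm:tame_pairs_3})$. Let $p\supseteq\SSS$ be a complete definable $n$-type over $R$, which we may assume is not realized in $R^n$ (otherwise $\SSS$ has a common point and a constant curve works). Realize $p$ by a tuple $\bar a$ in some elementary extension of $\RR$ and let $\RR^*$ be the structure with universe $dcl(R\cup\{\bar a\})$; since $\RR$ expands a field it has definable Skolem functions, so $\RR^*$ is an elementary extension of $\RR$. Every element of $R^*$ equals $f(\bar a)$ for an $\RR$-definable $f$, and its cut $\{r\in R:f(\bar a)>r\}=\{r\in R:(f(v)>r)\in p\}$ is definable by definability of $p$; a downward closed definable subset of $R$ has a supremum in $R\cup\{\pm\infty\}$, so $\RR^*$ is a \emph{tame} extension of $\RR$. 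Writing $\Omega=\{u:\RR\models\psi(u)\}$ and $S_u=\{v:\RR\models\phi(u,v)\}$, the tuple $\bar a$ witnesses that the tame pair $(\RR^*,R)$ satisfies the single first-order sentence
\[
\sigma:\qquad \exists v\,\forall u\,\big(R(u)\wedge\psi(u)\to\phi(u,v)\big).
\]
It is essential here that $\SSS$ is a \emph{definable} family, so that ``$v$ realizes all of $\SSS$'' becomes a bounded quantification over the definable index set $\Omega$ rather than an infinite conjunction. By the completeness of the theory of tame pairs of models of $\mathrm{Th}(\RR)$ of van den Dries and Lewenberg --- which needs $\RR$ to expand an ordered field --- the sentence $\sigma$ also holds in the tame pair $(\RR(\xi),R)$. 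Hence there is $\bar b\in R(\xi)^n$ with $\RR(\xi)\models\phi(u,\bar b)$ for all $u\in\Omega$; writing $\bar b=\gamma(\xi)$ for an $\RR$-definable curve $\gamma$, the transfer property of $\RR(\xi)$ gives, for each $u\in\Omega$, that $\gamma(t)\in S_u$ for all sufficiently small $t>0$. A routine modification of $\gamma$ on a tail arranges in addition that $\gamma(\dom)\subseteq\bigcup\SSS$, and then $(\ref{itm:tame_pairs_3})$ holds.

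Thus the delicate points, both in $(\ref{itm:tame_pairs_1})\Rightarrow(\ref{itm:tame_pairs_3})$, are the observations that definability of $p$ forces the prime model over a realization of $p$ to be a tame extension --- which is what opens the door to the tame pair machinery --- and that ``$\SSS$ is realized'' is first-order expressible in the tame pair precisely because $\SSS$ is a definable family, so that van den Dries--Lewenberg completeness can transport it to $(\RR(\xi),R)$, where all tuples are curves.
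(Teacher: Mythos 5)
Your proposal is correct and follows essentially the same route as the paper: the same implication diagram, the tails family $\{\gamma[(0,t)]:t>0\}$ for $(3)\Rightarrow(2)$, Corollary~\ref{cor_cofinal_curves_2} for $(2)\Rightarrow(3)$, and for $(1)\Rightarrow(3)$ the passage to the prime model over a realization, the observation that definability of the type makes it a tame extension, and transfer of the sentence $\sigma$ via van den Dries--Lewenberg completeness to $(\RR(\xi),R)$. Your write-up even fills in a couple of details the paper leaves implicit (why the prime model is tame, and the definable-choice step in $(2)\Rightarrow(3)$).
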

\begin{proof}
It is easy to show that (\ref{itm:tame_pairs_3}) implies (\ref{itm:tame_pairs_1}) and (\ref{itm:tame_pairs_2}). Namely, let $\gamma$ be as in (\ref{itm:tame_pairs_3}), and consider the definable downward directed family $\{\gamma[(0,t)] : t>0\}$.
This family is finer than $\SSS$, from which we conclude (\ref{itm:tame_pairs_2}). Moreover, to derive (\ref{itm:tame_pairs_1}), let $\RR(\xi)$ be an elementary extension of $\RR$ by an infinitesimal element. Then the element $\gamma(\xi)\in R(\xi)^n$ belongs in the interpretation in $\RR(\xi)$ of $S_u$ for every $u\in \Omega$, and so the complete definable $n$-type $\text{tp}(\gamma(\xi) | R)$ extends $\SSS$. Moreover $(\ref{itm:tame_pairs_2}) \Rightarrow (\ref{itm:tame_pairs_3})$ is Corollary~\ref{cor_cofinal_curves_2}. It therefore remains to show that $(\ref{itm:tame_pairs_1})$ implies $(\ref{itm:tame_pairs_2})$ or $(\ref{itm:tame_pairs_3})$. We show that (\ref{itm:tame_pairs_1}) implies (\ref{itm:tame_pairs_3}). The main tool will be the completeness of the theory of tame pairs. 

Suppose that $\SSS$ can be extended to a complete definable $n$-type $\SSS'$ over $R$. If $\SSS'$ is realised in $\RR$ it suffices to let $\gamma$ be a constant curve mapping to the realisation of $\SSS'$. Hence we assume that $\SSS'$ is not realised. Let $\RR^*$ be a (necessarily proper) elementary extension of $\RR$ such that there exists an element $c\in (R^*)^n$ that realizes $\SSS'$, and let $\RR(c)$ be the prime model over $R\cup\{c\}$. Then $\RR \lleq \RR(c)$. Note that any definable set in $\RR(c)$ is $R\cup\{c\}$-definable. It follows that $\RR(c)$ is a tame extension of $\RR$.
The point $c$ realizes $\SSS'$ in $\RR(c)$ and so, for every $u\in\Omega$, the element $c$ belongs in the interpretation of $S_u$ in $\RR(c)$. Now let $\psi(u)$ be a formula defining $\Omega$ (by adding parameters to the language if necessary we may assume that $\Omega$ is $0$-definable) and $\phi(v,u)$ is a formula such that, for every $u\in \Omega$, $S_u=\{v\in R^n : \RR\models \phi(v,u)\}$, then $(\RR(c),\RR)$ satisfies the following sentence in the language of tame pairs:
\[
\exists v \forall u ((\psi(u) \cap u\in R^m) \rightarrow \phi(v,u)). 
\]  
Let $\RR(\xi)$ be an elementary extension of $\RR$ generated 
by an infinitesimal element with respect to $\RR$. By the completeness of the theory of tame pairs, there exists $\gamma(\xi)\in \RR(\xi)$ such that $\RR(\xi)\models \phi(\gamma(\xi),u)$ for every $u\in\Omega$, where $\gamma$ is a definable (in $\RR$) curve. It follows that, for every $u\in\Omega$, $\gamma(t)\in S_u$ for all $t>0$ small enough. Hence we conclude (\ref{itm:tame_pairs_3}).    
\end{proof}

\begin{definition}
We call a type $p(v)$ a \emph{$\phi$-type}, where $\phi(v,u)$ is a formula, if there exists a set of parameters $\Omega\subseteq R^{|u|}$ such that $p(v)=\{\phi(v,u) : u\in \Omega\}$.

We call a type $p$ a \emph{type basis} if it is a filter basis, i.e. if it is downward directed. We say that a type $p(v)$ has a $\phi$-basis, for some formula $\phi$, if its restriction to $\phi$, namely $p \upharpoonright \phi =\{\phi(v,u)\in p\}$, is a type basis that generates $p$, i.e. if for any formula $\psi(v,w)$ with parameters $w$, $\psi(v,w)\in p$ if and only if there are parameters $u$ such that $\phi(v,u)\in p$ and $\RR\models \forall v (\phi(v,u)\rightarrow \psi(v,w))$. 
\end{definition}

Note that any complete type is a type basis. 
Any definable family of sets with the FIP is precisely a definable $\phi$-type for some formula $\phi$ and vice versa. Note that a type with a $\phi$-basis is definable if and only if its restriction to $\phi$ is definable. The proof of the implication $(\ref{itm:tame_pairs_3})\Rightarrow (\ref{itm:tame_pairs_1})$ in Theorem~\ref{them_tame_pairs} involves showing precisely that, by o-minimality, for any definable curve $\gamma$, the definable $\phi$-type $\{ \gamma[(0,t)] : t>0\}$ is a type basis that generates a complete definable type. 

\begin{remark}\label{remark_after_them_tame_pairs} 
As noted in the proof of Theorem~\ref{them_tame_pairs}, it follows easily from o-minimality that (\ref{itm:tame_pairs_3}) in the theorem implies (\ref{itm:tame_pairs_1}) and (\ref{itm:tame_pairs_2}), and the crux of the result lies in proving that (\ref{itm:tame_pairs_1}) implies (\ref{itm:tame_pairs_3}) and (\ref{itm:tame_pairs_2}) implies (\ref{itm:tame_pairs_3}). These are really two separate results, the former following from the completeness of the theory of tame pairs and the latter from results in previous sections. We note that only the implication $(\ref{itm:tame_pairs_1})\Rightarrow (\ref{itm:tame_pairs_3})$ requires the assumption that $\SSS$ is a $\phi$-type for some formula $\phi$ instead of just a type (and even then we do not require that the type be definable, i.e. that the index set $\Omega$ be definable). 
\end{remark}

The dimension of a type $p$ is defined to be the minimum dimension among sets in $p$. O-minimality implies that every one-dimensional complete type has a $\phi$-basis for some formula $\phi$. The same is trivially true for zero-dimensional complete types. 

The following is easy to prove. 
\begin{lemma}
If $\RR$ expands an ordered field and $\SSS$ is a type, then (\ref{itm:tame_pairs_3}) in Theorem~\ref{them_tame_pairs} is equivalent to any of the following statements. 
\begin{enumerate}[(1)] 
\item $\SSS$ is realized in $\RR(\xi)$, where $\xi$ is an element infinitesimal with respect to $\RR$.
\item $\SSS$ extends to a complete definable type of dimension at most $1$ (with a $\phi$-basis, for some formula $\phi$).
\end{enumerate}
\end{lemma}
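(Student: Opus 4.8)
Throughout, read condition $(\ref{itm:tame_pairs_3})$ of Theorem~\ref{them_tame_pairs}, for a general type $\SSS$, as: there is a definable curve $\gamma:\dom\to\cup\SSS$ such that for every $S\in\SSS$ one has $\gamma(t)\in S$ for all sufficiently small $t>0$. The plan is to prove the cycle $(\ref{itm:tame_pairs_3})\Rightarrow(1)\Rightarrow(2)\Rightarrow(\ref{itm:tame_pairs_3})$.

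\emph{$(\ref{itm:tame_pairs_3})\Rightarrow(1)$.} Given such a $\gamma$ and a positive infinitesimal $\xi$, the element $\gamma(\xi)\in R(\xi)^n$ is defined since $\xi>0$ lies in the interpretation of $\dom$ in $\RR(\xi)$. For each $S\in\SSS$, writing $\phi_S(v,u_S)$ for a defining formula, we have $\RR\models\phi_S(\gamma(t),u_S)$ for all small $t>0$, whence $\RR(\xi)\models\phi_S(\gamma(\xi),u_S)$ by o-minimality; so $\gamma(\xi)$ realises $\SSS$ in $\RR(\xi)$.

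\emph{$(1)\Rightarrow(2)$.} Suppose $c\in R(\xi)^n$ realises $\SSS$. Then $p:=\mathrm{tp}(c\,|\,R)$ is a complete type extending $\SSS$, and it is definable because every type realised in $\RR(\xi)$ is definable. Writing $c=\gamma(\xi)$ for a curve $\gamma$ definable in $\RR$, we get $\dim p\le\dim\gamma(\dom)\le 1$. Hence $p$ has a $\phi$-basis for some $\phi$: trivially if $\dim p=0$, and by the quoted o-minimality fact if $\dim p=1$. Thus $(2)$ holds.

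\emph{$(2)\Rightarrow(\ref{itm:tame_pairs_3})$.} Let $p\supseteq\SSS$ be complete and definable with $\phi$-basis $p\!\upharpoonright\!\phi=\{\phi(v,u):u\in\Omega\}$. Since $p$ is definable, so is $\Omega$, so $p\!\upharpoonright\!\phi$ is a definable family of subsets of $R^n$ with the FIP which, extending to the complete definable type $p$, satisfies condition $(\ref{itm:tame_pairs_1})$ of Theorem~\ref{them_tame_pairs}. As $\RR$ expands an ordered field, that theorem yields a definable curve $\gamma$ such that, for every $u\in\Omega$, $\gamma(t)$ lies in the set defined by $\phi(v,u)$ for all small $t>0$. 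Since $p\!\upharpoonright\!\phi$ generates $p$, every $S\in\SSS\subseteq p$ contains some member of $p\!\upharpoonright\!\phi$, so $\gamma(t)\in S$ for all small $t>0$. Restricting $\gamma$ to a small interval on which its values lie inside a fixed $S_0\in\SSS$ (hence in $\cup\SSS$), and reparametrising the domain to $\dom$ by an order-preserving definable bijection --- available since $\RR$ expands a field --- produces the curve required by $(\ref{itm:tame_pairs_3})$.

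I do not expect any step to present a serious obstacle; the only points requiring care are the bookkeeping caused by $\SSS$ being merely a type rather than a $\phi$-type (handled by always applying Theorem~\ref{them_tame_pairs} and the ``$\phi$-basis'' facts to $p\!\upharpoonright\!\phi$ rather than to $\SSS$ itself), and the routine reparametrisation needed to make the curve's domain exactly $\dom$ and its range lie in $\cup\SSS$.
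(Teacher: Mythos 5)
Your proof is correct, and it assembles exactly the ingredients the paper sets up for this purpose (the paper itself omits the proof as "easy"): o-minimality of curve germs for $(3)\Rightarrow(1)$, definability of types realised in $\RR(\xi)$ together with the $\phi$-basis fact for types of dimension $\leq 1$ for $(1)\Rightarrow(2)$, and Theorem~\ref{them_tame_pairs}$(\ref{itm:tame_pairs_1})\Rightarrow(\ref{itm:tame_pairs_3})$ applied to $p\upharpoonright\phi$ for $(2)\Rightarrow(\ref{itm:tame_pairs_3})$. The care you take over $\SSS$ being a mere type rather than a $\phi$-type, and over landing the curve in $\cup\SSS$, is exactly where the bookkeeping is needed, and both points are handled correctly.
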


It follows that the implications $(\ref{itm:tame_pairs_1}) \Rightarrow (\ref{itm:tame_pairs_3})$ and $(\ref{itm:tame_pairs_2})\Rightarrow (\ref{itm:tame_pairs_3})$ in Theorem~\ref{them_tame_pairs} can be formulated in terms of types as follows. 

\begin{proposition}
Let $p$ be a $\phi$-type, for some formula $\phi$. Suppose that either of the following holds.  
\begin{enumerate}[(i)]
\item $p$ can be extended to a complete definable type. 
\item $p$ can be extended to a definable type with a $\psi$-basis for some formula $\psi$. 
\end{enumerate}
Then $p$ extends to a complete definable type of dimension at most $1$.  
\end{proposition}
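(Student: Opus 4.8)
The plan is to reduce both hypotheses to statement~(\ref{itm:tame_pairs_3}) of Theorem~\ref{them_tame_pairs} and then invoke the preceding lemma, which identifies (\ref{itm:tame_pairs_3}) with extendability to a complete definable type of dimension at most~$1$ having a $\phi$-basis. So I need to show that each of (i) and (ii) implies (\ref{itm:tame_pairs_3}) for the $\phi$-type $p$.

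For case~(i): if $p$ extends to a complete definable type, then since $p$ is a $\phi$-type, the family $\SSS$ of definable sets corresponding to $p$ is a definable family of subsets of $R^n$ with the FIP (the index set $\Omega\subseteq R^{|u|}$ being $\{u : \phi(v,u)\in p\}$, which is definable precisely because the complete type extending $p$ is definable). Hence hypothesis~(\ref{itm:tame_pairs_1}) of Theorem~\ref{them_tame_pairs} holds, and by that theorem (\ref{itm:tame_pairs_3}) holds; then the lemma gives the conclusion. For case~(ii): if $p$ extends to a definable type $q$ with a $\psi$-basis, then $q\upharpoonright\psi$ is a definable downward directed family of sets, and since $q$ extends $p$ and $q\upharpoonright\psi$ generates $q$, every set in $p$ (equivalently, every $\phi(v,u)\in p$) contains a set from $q\upharpoonright\psi$; that is, $q\upharpoonright\psi$ is finer than $\SSS$. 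Thus hypothesis~(\ref{itm:tame_pairs_2}) of Theorem~\ref{them_tame_pairs} holds, so again (\ref{itm:tame_pairs_3}) holds, and the lemma finishes.

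I would lay this out by first recalling the correspondence between $\phi$-types and definable families with the FIP, noting that a $\phi$-type $p$ is in particular definable (its restriction to $\phi$ is the whole of $p$, so $p$ is definable iff $\Omega$ is, and "definable type" for us already presupposes $\Omega$ definable — if $p$ is merely assumed to be a $\phi$-type without the index set being definable, one still only needs the FIP of the family, which holds since $p$ is consistent). Then I would dispatch (i) via (\ref{itm:tame_pairs_1})$\Rightarrow$(\ref{itm:tame_pairs_3}) and (ii) via (\ref{itm:tame_pairs_2})$\Rightarrow$(\ref{itm:tame_pairs_3}), each time citing Theorem~\ref{them_tame_pairs} and the lemma.

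The main obstacle — really the only delicate point — is the bookkeeping in case~(ii): checking that a $\psi$-basis of a type $q\supseteq p$ is genuinely "finer than" the family $\SSS$ associated to $p$ in the sense of Theorem~\ref{them_tame_pairs}(\ref{itm:tame_pairs_2}). This is immediate from the definition of "$q$ has a $\psi$-basis" (each formula in $q$, in particular each $\phi(v,u)\in p\subseteq q$, is implied by some $\psi(v,w)\in q$, i.e.\ contains the corresponding definable set), but it must be stated carefully because $\SSS$ is indexed while the basis is a family of formulas. Everything else is a direct appeal to results already proved.
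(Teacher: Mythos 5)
Your proposal is correct and is essentially the argument the paper intends: the paper states this proposition as a direct reformulation of the implications $(1)\Rightarrow(3)$ and $(2)\Rightarrow(3)$ of Theorem~\ref{them_tame_pairs} combined with the preceding lemma, which is exactly your route. The only point to tighten is in case (i): the definable set produced by the complete definable extension $q$ is $\{u : \phi(v,u)\in q\}$, which may properly contain the given index set $\Omega$; one should pass to this larger (definable, still FIP) $\phi$-type extending $p$ before applying $(1)\Rightarrow(3)$ --- a one-line repair that the paper's Remark~\ref{remark_after_them_tame_pairs} already anticipates.
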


If we only assume that $\RR$ expands an ordered group then we may prove a result similar to the equivalence  $(\ref{itm:tame_pairs_2})\Leftrightarrow (\ref{itm:tame_pairs_3})$ in Theorem~\ref{them_tame_pairs} where we substitute (\ref{itm:tame_pairs_3}) with the existence of a definable map $\gamma: \RRR \rightarrow \cup\SSS$ where the definable family of sets $\{ \gamma[(0,s)\times (t,\infty)] : s,t>0\}$ is finer than $\SSS$. Note that this family is a basis for a type of dimension at most $2$.

From this observation and the implication $(\ref{itm:tame_pairs_2})\Rightarrow (\ref{itm:tame_pairs_3})$ in Theorem~\ref{them_tame_pairs} we derive the next corollary.

\begin{corollary}\label{cor:dim_types}
Every definable type with a $\phi$-basis may be extended to a complete definable type of dimension at most $2$.
In particular, every complete definable type with a $\phi$-basis has dimension at most $2$.
If $\RR$ expands an ordered field then every definable type with a $\phi$-basis extends to a complete definable type of dimension at most $1$ and every complete definable type with a $\phi$-basis has dimension at most $1$.
\end{corollary}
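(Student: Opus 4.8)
The plan is to carry out, at the level of types, the chain $(\ref{itm:tame_pairs_2})\Rightarrow(\ref{itm:tame_pairs_3})\Rightarrow(\ref{itm:tame_pairs_1})$ of Theorem~\ref{them_tame_pairs}, using in the general case the two-parameter variant of $(\ref{itm:tame_pairs_2})\Rightarrow(\ref{itm:tame_pairs_3})$ noted just above. So, given a definable type $p$ with a $\phi$-basis, I would first set $\SSS:=p\upharpoonright\phi=\{\phi(v,u):u\in\Omega\}$; this is a definable downward directed family of nonempty sets which generates $p$, and in particular it witnesses condition~(\ref{itm:tame_pairs_2}) of Theorem~\ref{them_tame_pairs} with $\SSS'=\SSS$. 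Applying the two-parameter variant of $(\ref{itm:tame_pairs_2})\Rightarrow(\ref{itm:tame_pairs_3})$ recorded above, there is a definable map $\gamma:\RRR\to\cup\SSS$ for which $\mathcal{T}:=\{\gamma[(0,s)\times(t,\infty)]:s,t>0\}$ is finer than $\SSS$. If instead $\RR$ expands an ordered field, I would apply the implication $(\ref{itm:tame_pairs_2})\Rightarrow(\ref{itm:tame_pairs_3})$ of Theorem~\ref{them_tame_pairs} directly to $\SSS$, obtaining a definable curve $\gamma:\dom\to\cup\SSS$ with $\gamma(t)\in S_u$ for all sufficiently small $t$ and every $u\in\Omega$, and set $\mathcal{T}:=\{\gamma[(0,t)]:t>0\}$. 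In either case $\mathcal{T}$ is a definable downward directed family (being the image under $\gamma$ of a family of sets that is downward directed under inclusion, namely $\{(0,s)\times(t,\infty)\}$, order-isomorphic to $(\RRR,\lle)$, respectively $\{(0,t)\}$), each of whose members is the image under a definable map of a subset of $R^2$, respectively of $R$, so has dimension at most $2$, respectively at most $1$; and since $\mathcal{T}$ is finer than $\SSS$, the filter it generates contains the filter generated by $\SSS$, namely $p$.

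The next step is to realise this filter in a tame extension. Let $\xi$ be infinitesimal with respect to $\RR$ and, in the two-parameter case, let $\eta$ be an element of an elementary extension of $\RR(\xi)$ which is infinite with respect to $\RR(\xi)$; then $(\xi,\eta)$, respectively $\xi$, lies in every set $(0,s)\times(t,\infty)$, respectively $(0,t)$, with $s,t\in R^{>0}$. Passing to the prime model $\RR(\xi,\eta)$, respectively $\RR(\xi)$, the point $w:=\gamma(\xi,\eta)$, respectively $\gamma(\xi)$, therefore lies in every member of $\mathcal{T}$, so $r:=\text{tp}(w\mid R)$ is a complete type in $S_n(R)$ extending the filter generated by $\mathcal{T}$, hence extending $p$. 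Moreover $r$ contains a fixed member of $\mathcal{T}$, for instance $\gamma[(0,b)\times(b,\infty)]$, respectively $\gamma[(0,b)]$, for any $b\in R^{>0}$, whose dimension is at most $2$, respectively at most $1$; as the dimension of a type is the least dimension of a set in it, $\dim r\leq 2$, respectively $\dim r\leq 1$. It then remains to see that $r$ is definable, which by Fact~\ref{fact:MS} will follow once we know that $\RR(\xi,\eta)$, respectively $\RR(\xi)$, is a tame extension of $\RR$; for $\RR(\xi)$ this is already recorded in Section~\ref{section_tame_pairs}, so the field case is complete.

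The main obstacle is thus the tameness of the two-parameter extension $\RR(\xi,\eta)$. I would handle it in two steps. First, tame extensions compose: if $\RR$ is an elementary substructure of $\RR'$ and $\RR'$ of $\RR''$, with $\RR'$ tame over $\RR$ and $\RR''$ tame over $\RR'$, then for $t\in R''$ the cut $\{s\in R:s<t\}$ is recovered from the (definable) cut of $t$ over $R'$ and, when that cut has a supremum in $R'$, from the (definable over $R$) cut of that supremum, so it is definable. Hence it suffices to check that $\RR(\xi,\eta)$ is tame over $\RR(\xi)$, and for this I would use the Monotonicity Theorem inside the o-minimal structure $\RR(\xi)$: every element of $R(\xi,\eta)$ is of the form $h(\eta)$ for a one-variable $\RR(\xi)$-definable $h$, and as $\eta$ is infinite over $R(\xi)$ the value $h(\eta)$ is either infinite over $R(\xi)$ or infinitely close over $R(\xi)$ to $\lim_{y\to\infty}h(y)\in R(\xi)$, so its cut over $R(\xi)$ is definable in either case. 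Finally, the \emph{in particular} assertions are immediate: when $p$ is already complete it has no proper extension to a complete type over $R$, so $p=r$ and therefore $\dim p\leq 2$, with $\dim p\leq 1$ when $\RR$ expands an ordered field.
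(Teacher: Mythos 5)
Your argument is correct and follows the route the paper intends: restrict $p$ to its $\phi$-basis $\SSS$, which (since $p$ is definable) is a definable downward directed family of nonempty sets generating $p$; apply the two-parameter variant of $(\ref{itm:tame_pairs_2})\Rightarrow(\ref{itm:tame_pairs_3})$ of Theorem~\ref{them_tame_pairs} (respectively Corollary~\ref{cor_cofinal_curves_2} in the field case) to obtain $\gamma$ and the finer family $\mathcal{T}$; and note that any complete definable type containing $\mathcal{T}$ extends $p$ and has dimension at most $2$ (respectively $1$), the ``in particular'' clauses following because a complete type admits no proper complete extension. Where you genuinely add something is at the final step. The paper compresses the existence of a \emph{complete definable} type containing $\mathcal{T}$ into the remark that $\mathcal{T}$ ``is a basis for a type of dimension at most $2$,'' leaving the completeness and definability of that type to the reader (it can be obtained by pushing forward along $\gamma$ the complete definable type of an infinitesimal, or, in the poleless case, of the box family of Remark~\ref{remark:type_2_dimensional}). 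You instead realise $\mathcal{T}$ at $\gamma(\xi,\eta)$ and deduce definability of $\mathrm{tp}(\gamma(\xi,\eta)\mid R)$ from Marker--Steinhorn (Fact~\ref{fact:MS}), which obliges you to check that tame extensions compose and that $\RR(\xi)(\eta)$ is tame over $\RR(\xi)$ via the Monotonicity Theorem; both checks are correct as you state them. Your route has the merit of treating the group case uniformly, without splitting on whether $\RR$ has a pole, at the price of verifying tameness of the iterated extension, which the pushforward argument sidesteps.
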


\begin{remark}\label{remark:type_2_dimensional}
The bounds in Corollary~\ref{cor:dim_types} are tight. In particular if $\RR$ does not expand an ordered field then the family $\{(0,s)\times (t,\infty) : s,t>0\}$ is a basis for a ($2$-dimensional) complete type. To see this recall Fact~\ref{fact:edmundo} and note that, if $\RR$ does not have a pole, then any cell $(f,g)$ that intersects every box of the form $(0,s)\times (t,\infty)$ must satisfy that $f=+\infty$ and $\lim_{t\rightarrow 0^+} g(t) < \infty$, and so it contains one such box. By cell decomposition it follows that, for every definable set $X\subseteq R^2$, there are $s_X, t_X>0$ such that $(0,s_X)\times (t_X,\infty)$ is either contained in or disjoint from $X$.
\end{remark}


From the fact that all complete types in an o-minimal expansion of $(\mathbb{R},<)$ are definable we may derive, using the implication $(\ref{itm:tame_pairs_1})\Rightarrow (\ref{itm:tame_pairs_3})$ in Theorem~\ref{them_tame_pairs}, the next corollary for o-minimal expansions of ordered archimedean fields. 

\begin{corollary}\label{cor_main_result_archimedean_field_case}
Suppose that $\RR$ expands an ordered field. The following are equivalent: 
\begin{enumerate}[(1)] 
\item \label{itm:cor_arch_field_1} $\RR$ is archimedean. 
\item \label{itm:cor_arch_field_2} If $\SSS$ is a definable family of sets with the FIP, then there exists a definable curve \mbox{$\gamma:\dom\rightarrow \cup\SSS$} such that, for every $S \in \SSS$, $\gamma(t)\in S$ for sufficiently small $t > 0$.
\item \label{itm:cor_arch_field_3} Every definable family $\SSS$ with the FIP can be extended to a complete definable type.  
\end{enumerate}
\end{corollary}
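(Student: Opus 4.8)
The plan is to prove $(\ref{itm:cor_arch_field_2})\Leftrightarrow(\ref{itm:cor_arch_field_3})$, then $(\ref{itm:cor_arch_field_1})\Rightarrow(\ref{itm:cor_arch_field_3})$, and finally $(\ref{itm:cor_arch_field_3})\Rightarrow(\ref{itm:cor_arch_field_1})$.

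The equivalence $(\ref{itm:cor_arch_field_2})\Leftrightarrow(\ref{itm:cor_arch_field_3})$ is Theorem~\ref{them_tame_pairs} read family-by-family: for a definable family $\SSS$ with the FIP, clause $(\ref{itm:tame_pairs_1})$ of that theorem says exactly that $\SSS$ extends to a complete definable type, and clause $(\ref{itm:tame_pairs_3})$ says exactly that the curve of $(\ref{itm:cor_arch_field_2})$ exists; since $\RR$ expands an ordered field these are equivalent for each $\SSS$, so quantifying over all such $\SSS$ yields the equivalence.

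For $(\ref{itm:cor_arch_field_1})\Rightarrow(\ref{itm:cor_arch_field_3})$ I would start from the fact that in an o-minimal expansion of $(\R,<)$ every complete type over $\R$ is definable: this holds because $\R$ is Dedekind complete, so every cut of $\R$ has a supremum in $\R\cup\{\pm\infty\}$, hence every proper elementary extension is a tame extension and Fact~\ref{fact:MS} applies; consequently over $\R$ a definable family with the FIP, being by compactness contained in some complete type over $\R$, already extends to a complete definable type. To pass from $\R$ to an arbitrary $\RR$ expanding an ordered archimedean field I would invoke Theorem~\ref{them_tame_pairs} together with the fact recorded in Section~\ref{section_tame_pairs} that, in the field case, $\SSS$ satisfies clause $(\ref{itm:tame_pairs_3})$ exactly when $\SSS$ is realized in $\RR(\xi)$: for a fixed pair of defining formulas this last property is a single sentence in the language of tame pairs about $(\RR(\xi),R)$, and since the theory of tame pairs over an ordered field is complete it transfers to $(\RR(\xi),R)$ from the corresponding sentence for a model on $\R$. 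I expect the genuine difficulty to lie precisely in being entitled to compare $\RR$ with a model whose field is $\R$; this is where archimedeanity is essential, the intuition being that a family defined from parameters in an archimedean field cannot force its realizations to fill a cut of $R$ lacking a supremum in $R$ (the kind of cut responsible for non-definability of types), so some completion of the family sits at a definable cut and is therefore definable.

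For $(\ref{itm:cor_arch_field_3})\Rightarrow(\ref{itm:cor_arch_field_1})$ I would prove the contrapositive via an explicit witness. Assume $\RR$ is not archimedean and fix $b\in R$ with $b>n$ for every $n\in\N$. Let $\SSS=\{S_u : u\in R\}$ with $S_u=(-b,b)\setminus(u-1,u+1)$. For any $u_1,\dots,u_k\in R$ the union $\bigcup_{i\le k}(u_i-1,u_i+1)$ is a union of $k$ intervals of length $2$ and so does not cover $(-b,b)$ (since $b>k$), giving $\bigcap_{i\le k}S_{u_i}=(-b,b)\setminus\bigcup_{i\le k}(u_i-1,u_i+1)\neq\emptyset$; thus $\SSS$ has the FIP. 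Now if $c$ realizes any completion of $\SSS$ then $c\in(-b,b)$ and $|c-u|\ge 1$ for all $u\in R$, so the initial segment $A=\{r\in R : r<c\}$ is nonempty and bounded above by $b$, yet has no supremum in $R$: for a candidate supremum $s\in R$ one of $s\pm\tfrac12\in R$ would either belong to $A$ while exceeding $s$, or be an upper bound of $A$ lying below $s$. Hence $A$ is not a definable subset of $R$ (a definable initial segment of $R$ is $\emptyset$, $R$, $(-\infty,s)$, or $(-\infty,s]$ for some $s\in R$), and since $A=\{r\in R : (x>r)\in\text{tp}(c\mid R)\}$ the type $\text{tp}(c\mid R)$ is not definable. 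As $c$ was arbitrary, $\SSS$ extends to no complete definable type, so $(\ref{itm:cor_arch_field_3})$ fails.
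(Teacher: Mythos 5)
Your handling of $(\ref{itm:cor_arch_field_2})\Leftrightarrow(\ref{itm:cor_arch_field_3})$ via Theorem~\ref{them_tame_pairs} and your contrapositive for $(\ref{itm:cor_arch_field_3})\Rightarrow(\ref{itm:cor_arch_field_1})$ are both correct and essentially match the paper (the paper's witness family is $\{(0,1)\setminus(a-r,a+r):a\in[0,1]\}$ with $r$ infinitesimal, which is your example after rescaling by $b$; the non-definability argument via a cut with no supremum is the same).

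The genuine gap is in $(\ref{itm:cor_arch_field_1})\Rightarrow(\ref{itm:cor_arch_field_3})$. You correctly observe that the whole difficulty is ``being entitled to compare $\RR$ with a model whose field is $\R$,'' but you then substitute an intuition for an argument: nothing in your proposal produces an o-minimal structure on $\R$ into which $\RR$ elementarily embeds, and without such a structure your transfer has no model ``on $\R$'' to transfer \emph{from}. The paper closes exactly this gap by citing the Laskowski--Steinhorn theorem: every o-minimal expansion of an archimedean ordered group embeds elementarily into an o-minimal expansion of $(\R,+,<)$; the FIP of $\SSS$ is preserved under this embedding (it is witnessed by countably many sentences of the elementary diagram), and the existence of the desired curve is an elementary statement that pulls back. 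Once over $\R$, Dedekind completeness makes every elementary extension tame, Fact~\ref{fact:MS} makes every type definable, and Theorem~\ref{them_tame_pairs} finishes. A secondary problem with your proposed route is the appeal to completeness of the theory of tame pairs to move a sentence from a pair over $\R$ to $(\RR(\xi),R)$: the sentence ``$\SSS$ is realized'' carries the parameters of $\SSS$, which live in $R$, and completeness of the parameter-free theory of tame pairs does not match parameters across the two pairs; in the proof of Theorem~\ref{them_tame_pairs} this issue does not arise because both tame pairs being compared sit over the \emph{same} small model $\RR$ (with the parameters added to the language), whereas in your argument the small models are $\R$ and $R$. So the elementary embedding supplied by Laskowski--Steinhorn is not an optional convenience but the missing load-bearing step.
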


\begin{proof}
The implication (\ref{itm:cor_arch_field_2})$\Rightarrow$(\ref{itm:cor_arch_field_3}) follows from o-minimality. We prove (\ref{itm:cor_arch_field_3})$\Rightarrow$(\ref{itm:cor_arch_field_1}) and (\ref{itm:cor_arch_field_1})$\Rightarrow$(\ref{itm:cor_arch_field_2}). 

To show (\ref{itm:cor_arch_field_3})$\Rightarrow$(\ref{itm:cor_arch_field_1}), suppose that $\RR$ is nonarchimedean and let $r\in R$ denote an infinitesimal with respect to $1$. Consider the definable family of sets $\SSS=\{(0,1)\setminus (a-r, a+r) : a\in [0,1]\}$. Because $r$ is infinitesimal this family has the FIP. It cannot however be extended to a complete definable type $p(v)$, since in that case the set $\{t: (t \leq v)\in p(v)\}$ would have a supremum $s\in [0,1]$, contradicting $((v< s-r)\vee (s+r < v))\in p(v)$.

To show (\ref{itm:cor_arch_field_1})$\Rightarrow$(\ref{itm:cor_arch_field_2}), suppose that $\RR$ expands an archimedean field and let $\SSS=\{S_u\subseteq R^n : u\in \Omega\}$ be a definable family with the FIP. The result amounts to the following claim: that, for some $(1+n+m)$-formula $\phi(t,v,w)$, there are parameters $b\in R^m$ such that $\phi(t,v,b)$ defines a curve $\gamma$ in $R^n$ (given by $t\mapsto v$) with the property that, for all $u\in \Omega$, $\gamma(t)\in S_u$ whenever $t>0$ is small enough. So, in order to complete the proof, it suffices to show this in an elementary extension of $\RR$.  

Laskowski and Steinhorn~\cite{las_stein_95} proved that any o-minimal expansion of an archimedean ordered group is elementarily embeddable into an o-minimal expansion of the additive ordered group of real numbers. Moreover note that the fact that the family $\SSS$ has the FIP is witnessed by countably many sentences in the elementary diagram of $\RR$ and so the same property holds for the family $\SSS$ interpreted in any elementary extension of $\RR$. Therefore, by Laskowski-Steinhorn \cite{las_stein_95}, we may assume that $\RR$ expands the ordered additive group of real numbers. 

By Dedekind completeness of the reals, every elementary extension of $\RR$ is tame, and thus by Marker-Steinhorn (Fact~\ref{fact:MS}) every type over $R$ is definable. In particular any expansion of $\SSS$ to a complete type over $R$ is definable. The corollary then follows from Theorem~\ref{them_tame_pairs}.
\end{proof}

Our study of cofinality via the theory of tame pair highlights, through the equivalence $(\ref{itm:tame_pairs_1})\Leftrightarrow (\ref{itm:tame_pairs_2})$ in Theorem~\ref{them_tame_pairs}, a connection between definable types and downward directed families of sets in o-minimal expansions of fields. In work in preparation~\cite{andujar19}, the first author extends this connection further, proving that this equivalence holds in all o-minimal structures, and develops the topological applications of this.
 
\begin{theorem}[\hspace{1sp}\cite{andujar19}]
Let $\SSS$ be a definable family of subsets of $R^n$ with the FIP. The following are equivalent. 
\begin{enumerate}[(1)]
\item $\SSS$ can be extended to a complete definable $n$-type over $R$. 
\item There exists a definable downward directed family that is finer than $\SSS$.
\end{enumerate}
\end{theorem}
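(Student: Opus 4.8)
We argue the two implications separately; notably, at the full o-minimal generality of the statement both directions require ideas going beyond the tools of the present paper, which assume an ordered group or field.

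For $(2)\Rightarrow(1)$: a definable downward directed family $\SSS'=\{S'_w:w\in\Omega'\}$ that is finer than $\SSS$ is, taking its defining formula as $\phi$, a definable type with a $\phi$-basis, and I would show that any such family extends to a complete definable type $p$; since $\SSS'$ is finer than $\SSS$, the filter generated by $\SSS'$ contains every member of $\SSS$, so $p$ extends $\SSS$. When $\RR$ expands an ordered group this extension is immediate from Corollary~\ref{cor:dim_types}. In general one proves it directly using o-minimality to control the type generated by $\SSS'$: pass to a member of $\SSS'$ of minimal dimension (these are cofinal in $\SSS'$ since $\SSS'$ is downward directed), project it homeomorphically to reduce to the case that the members of $\SSS'$ are of full dimension, and then, by a dimension argument, exhibit a completion whose defining data is still definable.

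The substance is $(1)\Rightarrow(2)$. Suppose $\SSS=\{S_u:u\in\Omega\}$ extends to a complete definable $n$-type $p$ over $R$, and argue by induction on $n$. First reduce to the case $\dim p=n$: if $\dim p=d<n$, pick a cell $D\in p$ with $\dim D=d$ and a coordinate projection $\pi$ restricting to a homeomorphism of $D$ onto an open subset of $R^{d}$; then $\{\pi(S_u\cap D):u\in\Omega\}$ is a definable family of subsets of $R^{d}$ with the FIP, because each $S_{u_1}\cap\dots\cap S_{u_k}\cap D$ lies in $p$ and is hence non-empty, and it extends to the complete definable $d$-type $\pi_*p$; by the inductive hypothesis it has a definable downward directed refinement, which pulls back through $(\pi|_D)^{-1}$ to one finer than $\SSS$. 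So assume $\dim p=n$. Then the projection $q$ of $p$ to the first coordinate is a complete definable $1$-type that is \emph{not} realised in $R$ --- otherwise a fiber $\{a\}\times R^{n-1}\in p$ would have dimension $n-1$ --- so $q$ is the germ at $a^{+}$ or $a^{-}$ for some $a\in R$, or the type at $\pm\infty$; after a translation, say $q$ is the germ at $0^{+}$, so that $p$ contains the slab $(0,t_0)\times R^{n-1}$ for some $t_0>0$.

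The plan for this case is to realise $q$ by a positive infinitesimal $\delta$, form the fiber type $p_{\delta}$ of $p$ over $\delta$ --- a complete type on $R^{n-1}$ over $R\cup\{\delta\}$, definable over $R\cup\{\delta\}$ precisely because $p$ is definable over $R$, and containing the fibers of the $S_u$ over $\delta$ --- apply the inductive hypothesis in ambient dimension $n-1$ over the o-minimal structure generated by $R\cup\{\delta\}$ to obtain a definable downward directed refinement there, and finally transfer this refinement back down to $\RR$, using that a first-order property of a parametrised definable family that holds at $\delta$ holds for all sufficiently small values of the parameter; one then selects such a value in $R$ and reads off the required $R$-definable downward directed family finer than $\SSS$.

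The hard part is the passage between fibers, and this is where I expect the real work to lie. One must first arrange the fiberwise data so that it honestly has the FIP, which forces one to work with the fiber of the \emph{complete} type $p$ rather than with the naive fibers of $\SSS$: for a fixed first coordinate $x$ the family $\{(S_u)_x:u\in\Omega\}$ need not have the FIP, and the index set of $\SSS$ grows when the base is enlarged. More seriously, a downward directed refinement produced at a single fiber is attached to that fiber, and families sitting over distinct first coordinates are mutually incomparable, so they do not assemble into a single downward directed family over $\RR$; one must instead make the refinement uniform over an initial segment of first coordinates, which is exactly the step that requires the definability --- and not merely the consistency --- of $p$, for instance through the definable variation of the fiber types of $p$ in the first coordinate. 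This is precisely the point that the completeness of the theory of tame pairs trivialises in the field case of Theorem~\ref{them_tame_pairs}, where it hands one a single definable curve in one stroke; the unavailability of such a device in the general o-minimal setting is what makes the statement genuinely non-trivial.
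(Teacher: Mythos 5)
The paper does not actually prove this statement: it is quoted from the first author's work in preparation \cite{andujar19}, and the only evidence supplied in-paper is the special case where $\RR$ expands an ordered field (the equivalence $(1)\Leftrightarrow(2)$ inside Theorem~\ref{them_tame_pairs}, via tame pairs) together with the group-case Corollary~\ref{cor:dim_types}. So there is no internal proof to measure your attempt against, and it must stand on its own; as written it does not. It is an honest plan rather than a proof, and both load-bearing steps are left open. For $(2)\Rightarrow(1)$ outside the group setting, the assertion that a definable downward directed family extends to a complete \emph{definable} type is essentially half of the theorem: the filter generated by $\SSS'$ is in general far from complete (e.g.\ $\{(0,t)\times R : t>0\}$ decides nothing about the second coordinate), so a genuine extension is required, and reducing to full-dimensional members via a projection does not by itself produce one. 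Your phrase \textquotedblleft by a dimension argument, exhibit a completion whose defining data is still definable" names exactly the content that needs proving; none of the paper's machinery (Theorem~\ref{them_1st_countability}, Corollary~\ref{cor:dim_types}) is available here, since Proposition~\ref{prop:example} shows that machinery genuinely fails without a group operation.

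For $(1)\Rightarrow(2)$, your induction on $n$ via the pushforward $q$ of $p$ to the first coordinate and the fiber type over an infinitesimal realisation $\delta$ of $q$ is a sensible skeleton, but you yourself isolate the decisive step --- converting a downward directed refinement attached to the single fiber at $\delta$ into one $R$-definable downward directed family, uniformly over an initial segment of first coordinates --- and then explicitly decline to carry it out (\textquotedblleft this is where I expect the real work to lie"). That uniformisation is precisely where the definability, and not merely the consistency, of $p$ must be exploited, and it is the step most likely to fail if done naively, since the fiberwise refinements over distinct base points are mutually incomparable and need not cohere. Without it the inductive step is incomplete. In short: the strategy is plausible and correctly locates the difficulties, but as submitted this is a research programme with two concretely identified missing lemmas, not a proof.
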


\section{Definable topological spaces}\label{section_def_top_spaces}
Our understanding of a \textquotedblleft definable topological space" is that of a topological space in which the topology is explicitly definable in a sense that generalises Flum and Ziegler~\cite{flum_ziegler_80} and Pillay~\cite{pillay87}. 
We restrict this idea however to the specific context of an o-minimal structure, wanting to expand on similar work around metric space topology by the third author~\cite{walsberg15}. We study one dimensional definable topological spaces in detail in~\cite{atw2}. Our definition is featured independently by Johnson~\cite{johnson14}, and Peterzil and Rosel~\cite{pet_rosel_18}. 


\begin{definition}\label{def_definable_top_spaces}
A \emph{definable topological space} $(X,\tau)$ is a topological space such that $X\subseteq R^n$ and there exists a definable family of sets $\BB\subseteq \mathcal{P}(X)$ that is a basis for $\tau$. 
\end{definition}


As in any o-minimal structure the canonical definable topology in $\RR$ is the order topology in $R$ and induced product topology in $R^n$. We refer to these topologies as euclidean.

Note that, given a definable topological space, a definable basis of neighborhoods of a given point is a definable downward directed family.

The following definition of convergence of a curve in a definable topological space is a natural adaptation of a standard notion in the area of o-minimality, and in particular of recent similar definitions in~\cite{thomas12} and~\cite{walsberg15}.

\begin{definition}
Let $(X,\tau)$ be a definable topological space, $x\in X$ and $\gamma:\dom\rightarrow X$ be a curve. 

We say that $\gamma$ \emph{converges in $(X,\tau)$ (or $\tau$-converges) to $x$ as $t\rightarrow 0$}, and write $\tau$-$\lim_{t\rightarrow 0} \gamma=x$, if, for every neighborhood $A$ of $x$, there exists $t(A)>0$ such that $\gamma(t)\in A$ whenever $0<t \leq t(A)$. 

Analogously we say that \emph{$\gamma$ converges in $(X,\tau)$ (or $\tau$-converges) to $x$ as $t\rightarrow \infty$}, and write $\tau$-$\lim_{t\rightarrow \infty} \gamma=x$, if, for every neighborhood $A$ of $x$, there exists $t(A)>0$ such that $\gamma(t)\in A$ whenever $t\geq t(A)$.
\end{definition}   

As usual we omit references to the topology when there is no possible ambiguity.    
When making the assumption that $\RR$ expands a field we will always understand convergence of a curve $\gamma$ to mean convergence as $t\rightarrow 0$. 


Recall the classical definitions of the \emph{density}, $d(X)$, and \emph{character}, $\chi(X)$, of a topological space $(X,\tau)$, namely 
$$d(X)=\min\{|D| : D\subseteq X \text{ is dense in } X\}$$ and 
$$\chi(X)=\sup_{x\in X}\min\{|\BB| : \BB \text{ a basis of neighborhoods of $x$}\}.$$
 In particular $X$ is separable if and only if $d(X)\leq \aleph_0$, and first countable if and only if $\chi(X)\leq \aleph_0$. Given a definable set $Y$ and a lack of reference to a definable topology on $Y$ we write $d(Y)$ to denote the density of $Y$ with the euclidean topology. 

The first result in this section is an observation that follows only from o-minimality, that is, we do not require the assumption that $\RR$ expands an ordered group. It implies that any definable topological space in an o-minimal expansion of $(\R,<)$ is first countable.   

\begin{proposition}\label{prop_1st_countability}
\sloppy Every definable downward directed set admits a downward cofinal subset of cardinality at most $d(R)$.

It follows that every definable topological space $(X,\tau)$ satisfies \mbox{$\chi(X) \leq d(R)$}. In particular, if $(R,<)$ is separable, then any definable topological space is first countable.
\end{proposition}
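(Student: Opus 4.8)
The plan is to prove the first assertion directly and then read off the topological consequence. Let $(\Omega,\lleq)$ be a definable downward directed set, $\Omega \subseteq R^N$. Pick a dense subset $D \subseteq R$ of cardinality $d(R)$; then $D^N$ is dense in $R^N$ in the euclidean topology, again of cardinality $d(R)$ (since $d(R)$ is infinite). The idea is that $D^N \cap \Omega$ need not itself be downward cofinal, but it is ``euclidean-dense enough'' in $\Omega$ that, after suitably re-coordinatising $\Omega$ so that $\lleq$-boundedness becomes a local (euclidean) phenomenon, a countable-type combinatorial argument lets us replace $\Omega$ by a subset of size $d(R)$ that is still $\lleq$-cofinal.

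Concretely, first I would invoke Corollary~\ref{cor_for_proof_main_them} to replace $(\Omega,\lleq)$ by a definably preorder-isomorphic copy in which every closed and bounded definable subset of $\Omega$ is $\lleq$-bounded in $\Omega$; a downward cofinal subset of size $\le d(R)$ in the new set pulls back to one in the original, so this is harmless. Next, by Lemma~\ref{lemma_D_sigma} write $\Omega = \bigcup_{(s,t)\in \RRR} \Omega(s,t)$ as an increasing (with respect to $\lle$) union of definable closed and bounded sets; by the previous step each $\Omega(s,t)$ is $\lleq$-bounded in $\Omega$, say by some $v(s,t)$, and by definable choice we may take $(s,t)\mapsto v(s,t)$ definable. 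The crucial observation is that for the family $\{\Omega(s,t)\}$ it suffices to let $(s,t)$ range over a \emph{countable cofinal subset} of $(\RRR,\lle)$: indeed $(\RRR,\lle)$ has a countable cofinal subset precisely when $R$ has a countable dense subset, but in general the set $\{v(s,t) : (s,t) \in D_0 \times D_0\}$ for $D_0 \subseteq R^{>0}$ euclidean-dense of size $d(R)$ is downward cofinal in $\Omega$: given $x \in \Omega$, fix $(s_x,t_x)$ with $x \in \Omega(s_x,t_x)$; pick $s \in D_0$ with $s \le s_x$ and $t \in D_0$ with $t \ge t_x$, which exist by density since $(0,s_x)$ and $(t_x,\infty)$ are nonempty open; then $(s,t) \lle (s_x,t_x)$, so $x \in \Omega(s,t)$, whence $v(s,t) \lleq x$. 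Thus $\{v(s,t) : (s,t)\in D_0\times D_0\}$ is a downward cofinal subset of cardinality at most $|D_0|^2 = d(R)$.

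For the second assertion: if $(X,\tau)$ is a definable topological space with definable basis $\BB$ and $x \in X$, then the subfamily of basic open sets containing $x$ forms a definable downward directed set under reverse inclusion (this is noted just before the statement), so by the first part it has a downward cofinal subset of size at most $d(R)$; a downward cofinal subset here is exactly a neighbourhood basis at $x$ of that cardinality. Hence $\chi(X) \le d(R)$, and if $(R,<)$ is separable then $d(R) \le \aleph_0$, giving first countability.

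The main obstacle is the density/cofinality interface in the first paragraph: one must be careful that euclidean density of $D_0$ in $R^{>0}$ genuinely delivers cofinality in $(\RRR,\lle)$ (it does, because the $\lle$-order on $\RRR$ is built coordinatewise from $\le$ and $\ge$ on $R^{>0}$, for which any euclidean-dense set is both coinitial-approximating from above and cofinal-approximating), and that passing through the $D_\Sigma$ presentation does not secretly require more than $d(R)$ parameters — it does not, since we only evaluate the fixed definable function $v$ at the $d(R)$-sized index set $D_0 \times D_0$. Everything else is bookkeeping with Corollary~\ref{cor_for_proof_main_them}, Lemma~\ref{lemma_D_sigma}, and definable choice.
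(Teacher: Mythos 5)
Your argument is correct under the paper's standing hypothesis that $\RR$ expands an ordered group, and the cardinality bookkeeping (euclidean density of $D_0$ in $R^{>0}$ giving $\lle$-cofinality of $D_0\times D_0$ in $(\RRR,\lle)$, hence a cofinal image of size $d(R)$) is sound. However, it takes a genuinely different and heavier route than the paper, and this matters: the whole point of Proposition~\ref{prop_1st_countability}, as stated in the sentence introducing it, is that it needs \emph{only o-minimality} and not the group structure. Your proof routes through Corollary~\ref{cor_for_proof_main_them} and Lemma~\ref{lemma_D_sigma} (essentially you are post-composing the cofinal map of Theorem~\ref{them_1st_countability} with a dense set of parameters), and all of that machinery uses definable choice, the doubling lemma (which needs $+$ and halving), and balls --- i.e., the ordered group. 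The paper instead argues directly: pass to the induced family $S_u=\{v\in\Omega: v\lleq u\}$, fix $S_*$ of minimal dimension among the $S_u$, and observe that each $S_u\cap S_*$ contains some $S'$ of the same dimension as $S_*$ and hence has euclidean interior in $S_*$ (frontier dimension inequality); therefore any euclidean-dense subset $D\subseteq S_*$ of cardinality at most $d(R)$ (which exists by cell decomposition) meets every $S_u$ and is downward cofinal. That argument uses only o-minimal dimension theory and so recovers the extra generality that your proof loses; in the group setting the two proofs give the same conclusion, and yours is the natural "specialize Theorem~\ref{them_1st_countability}" observation that the paper already records in the remark following that theorem.
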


Note that this proposition cannot be improved by writing second countable in place of first countable, since the discrete topology on $R$ is definable and not second countable whenever $R$ is uncountable.

\begin{proof}[Proof of Proposition~\ref{prop_1st_countability}]
First we recall some facts of o-minimality. Let $D_R\subseteq R$ be a set dense in $R$ and let $C\subseteq R^n$ be a cell of dimension $m$. There exists a projection $\pi:C\rightarrow R^{m}$ that is a homeomorphism onto an open cell $C'$. If $m=0$ then $d(C)=1$. Otherwise the set $\pi^{-1}(D_R^m\cap C')$, where $D_R^m=D_R\times \overset{m}{\cdots}\times D_R$, is clearly dense in $C$. By o-minimal cell decomposition, it follows that any definable set $B$ satisfies $d(B)\leq d(R)$. Secondly recall that, by the frontier dimension inequality, for any definable set $B$ and definable subset $B'$, if $\dim B =\dim B'$ then $B'$ has interior in $B$.     

Now set $\lambda = d(R)$.  Let $(\Omega,\lleq)$ be a definable downward directed set. For any $u\in \Omega$, let $S_u=\{v\in\Omega : v\lleq u \}$. By Remark~\ref{remark_sets_order} the definable family $\SSS=\{S_u : u\in \Omega\}$ is downward directed. We fix $S_*\in \SSS$ such that $\dim S_*=\min\{\dim S : S\in\SSS\}$, and let $D$ be a dense subset of $S_*$ of cardinality at most $\lambda$. We claim that $D$ is downward cofinal in $\Omega$. This is because, for every $S_u\in \SSS$, there exists $S'\in\SSS$ such that $S'\subseteq S_u\cap S_*$, and so $\dim (S_u\cap S_*)=\dim(S_*)$, and in particular $S_u\cap S_*$ has interior in $S_*$. This means that, for every $u\in \Omega$, there exists some $v\in D$ such that $v\in S_u\cap S_*$, and so $v\lleq u$.

Let $(X,\tau)$ be a definable topological space. For any given $x\in X$, let $\BB(x)=\{A_u : u\in\Omega_x\}$ be a definable basis of neighborhoods of $x$. Let $(\Omega_x,\lleq_x)$ denote the definable downward directed set given $u\lleq_x v \Leftrightarrow A_u\subseteq A_v$. Note that, if $D$ is a downward cofinal subset of $\Omega_x$, then the family $\{A_u : u\in D\}$ is still a basis of neighborhoods of $x$ in $(X,\tau)$. This completes the proof of the proposition.  
\end{proof}

We now make use of Theorem~\ref{them_1st_countability} and Corollary~\ref{cor_cofinal_curves_2} to prove a fundamental theorem regarding definable topological spaces.
It might strike the reader as surprising that we name this theorem \textquotedblleft definable first countability", as opposed to using that label for Proposition~\ref{prop_1st_countability}. The reasons for this will be explained later in the section, but in short it is due to the fact that a consequence (Proposition~\ref{curve_selection}) of Theorem~\ref{cor_main_cofinal_bases} is that, whenever $\RR$ expands an ordered field, any definable topological space admits definable curve selection. 





\begin{theorem}[Definable first countability]\label{cor_main_cofinal_bases}
Let $(X,\tau)$ be a definable topological space with definable basis $\BB$.
\begin{enumerate}[(1)]
\item \label{itm:1st_countability_1} For every $x\in X$, there exists a definable basis of neighborhoods of $x$ of the form $\{A_{(s,t)} : s,t>0 \} \subseteq \BB$ satisfying that, for any neighborhood $A$ of $x$, there exist $s_A, t_A >0$ such that $A_{(s,t)}\subseteq A$ whenever $(s,t)\lle (s_A,t_A)$.  

\item \label{itm:1st_countability_2} Suppose that $\RR$ expands an ordered field. Then, for every $x\in X$, there exists a definable basis of neighborhoods of $x$ of the form $\{A_{t} : t>0\} \subseteq \BB$ satisfying that, for any neighborhood $A$ of $x$, there exists $t_A>0$ such that $A_{t}\subseteq A$ whenever $0<t\leq t_A$.  
\end{enumerate}
\end{theorem}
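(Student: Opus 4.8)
The statement follows almost immediately from Theorem~\ref{them_1st_countability}, Corollary~\ref{cor_cofinal_curves_2}, and the observation already recorded in the text that a definable basis of neighborhoods of a point is a definable downward directed family. The plan is to transfer the cofinal maps provided by those earlier results along the correspondence between definable families of sets and definable preordered sets (Remark~\ref{remark_sets_order}), taking care that the resulting families of neighborhoods are again drawn from the given basis $\BB$.

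For part (\ref{itm:1st_countability_1}), fix $x\in X$ and let $\BB(x)=\{B_u : u\in\Omega_x\}\subseteq\BB$ be a definable basis of neighborhoods of $x$. Equip $\Omega_x$ with the definable preorder $\lleq_x$ given by $u\lleq_x v \Leftrightarrow B_u\subseteq B_v$; since $\BB(x)$ is a basis of neighborhoods, $(\Omega_x,\lleq_x)$ is a definable downward directed set. Apply Theorem~\ref{them_1st_countability} to obtain a definable downward cofinal map $\gamma:(\RRR,\lle)\to(\Omega_x,\lleq_x)$, and set $A_{(s,t)}:=B_{\gamma(s,t)}\in\BB$. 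I would then verify the required property: given any neighborhood $A$ of $x$, pick $u\in\Omega_x$ with $B_u\subseteq A$ (possible as $\BB(x)$ is a basis of neighborhoods); downward cofinality of $\gamma$ yields $(s_A,t_A)\in\RRR$ such that $(s,t)\lle(s_A,t_A)$ implies $\gamma(s,t)\lleq_x u$, i.e.\ $A_{(s,t)}=B_{\gamma(s,t)}\subseteq B_u\subseteq A$. It remains to note that $\{A_{(s,t)} : s,t>0\}$ is itself a basis of neighborhoods of $x$: each $A_{(s,t)}$ is a neighborhood of $x$ since it lies in $\BB(x)$, and the property just proved (applied to an arbitrary basic neighborhood $A$) shows every neighborhood of $x$ contains some $A_{(s,t)}$.

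Part (\ref{itm:1st_countability_2}) is entirely analogous, using Corollary~\ref{cor_cofinal_curves_2} in place of Theorem~\ref{them_1st_countability}: under the assumption that $\RR$ expands an ordered field, the definable downward directed set $(\Omega_x,\lleq_x)$ admits a definable downward cofinal curve $\gamma:\dom\to\Omega_x$, and one sets $A_t:=B_{\gamma(t)}$. The verification that $\{A_t : t>0\}$ is a basis of neighborhoods of $x$ with the stated property is the same as above, reading ``$0<t\leq t_A$'' in place of ``$(s,t)\lle(s_A,t_A)$''.

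There is no real obstacle here; the only point requiring a moment's care is the bookkeeping that ensures the new families consist of sets from the prescribed basis $\BB$ and genuinely remain bases of neighborhoods — which is immediate once one observes that they are subfamilies of $\BB(x)$ and that cofinality of $\gamma$ gives the needed refinement property against every basic (hence every) neighborhood of $x$. All the substantive work is already contained in Theorem~\ref{them_1st_countability} and Corollary~\ref{cor_cofinal_curves_2}.
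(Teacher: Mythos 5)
Your proposal is correct and follows essentially the same route as the paper: restrict the given basis $\BB$ to the subfamily of members containing $x$, order its index set by inclusion to get a definable downward directed set, and compose with the cofinal map from Theorem~\ref{them_1st_countability} (respectively the cofinal curve from Corollary~\ref{cor_cofinal_curves_2}). The verification you spell out is exactly what the paper leaves implicit.
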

\begin{proof}
Let $(X,\tau)$ be a definable topological space with definable basis $\BB=\{A_u : u\in\Omega\}$. We begin by proving (\ref{itm:1st_countability_1}). 

Given $x\in X$ let $\Omega_x=\{u\in\Omega : x\in A_u\}$. The family $\{A_u : u\in \Omega_x\}\subseteq \BB$ is a definable basis of neighborhoods of $x$ that induces a definable preorder $\lleq_\BB$ by inclusion (see Remark~\ref{remark_sets_order}(\ref{itm:remark_i})) making $(\Omega_x,\lleq_{\BB})$ into a definable downward directed set. By Theorem~\ref{them_1st_countability}, let $\gamma:(\RRR,\lle)\rightarrow (\Omega_x, \lleq_\BB)$ be a definable downward cofinal map. The family $\{A_{\gamma(s,t)} : s,t>0\}$ has the desired properties. 

The proof of (\ref{itm:1st_countability_2}) follows analogously from Corollary~\ref{cor_cofinal_curves_2}. 
\end{proof}

We denote the neighborhood bases described in (\ref{itm:1st_countability_1}) and (\ref{itm:1st_countability_2}) in the above theorem \emph{cofinal bases of neigborhoods of $x$ in $(X,\tau)$}. By using Corollaries~\ref{cor_improvem_them} and~\ref{cor_cofinal_curves_2}, one may show that these may be chosen uniformly on $x\in X$.


The next corollary is a refinement of Theorem~\ref{cor_main_cofinal_bases}. It shows in particular that the cofinal bases of neighborhoods described in (\ref{itm:1st_countability_2}) may be assumed to be nested.

\begin{corollary}\label{cor_nested_cofinal_bases}
Let $(X,\tau)$ be a definable topological space. 
\begin{enumerate}[(1)]
\item \label{itm:cor_nested_bases_1} For every $x\in X$, there exists a definable basis of open neighborhoods of $x$ of the form $\{A_{(s,t)} : s,t>0 \}$ satisfying $A_{(s',t')}\subseteq A_{(s,t)}$ whenever $(s',t')\lle (s,t)$. 

\item \label{itm:cor_nested_bases_2} Suppose that $\RR$ expands an ordered field. Then, for any $x\in X$, there exists a definable basis of open neighborhoods of $x$ of the form \mbox{$\{ A_t : t>0\}$} satisfying $A_s \subseteq A_t$ whenever $0<s<t$. 
\end{enumerate}
\end{corollary}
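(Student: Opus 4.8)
The plan is to deduce each part of Corollary~\ref{cor_nested_cofinal_bases} from the corresponding part of Theorem~\ref{cor_main_cofinal_bases} by ``integrating'' the cofinal basis given there into a genuinely nested one, and then replacing each member by its interior to get open sets. I will treat part (\ref{itm:cor_nested_bases_2}) first, since it is cleaner, and then adapt the argument to part (\ref{itm:cor_nested_bases_1}).

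For part (\ref{itm:cor_nested_bases_2}), fix $x\in X$ and apply Theorem~\ref{cor_main_cofinal_bases}(\ref{itm:1st_countability_2}) to obtain a definable family $\{A'_t : t>0\}\subseteq\BB$ of neighborhoods of $x$ such that for every neighborhood $A$ of $x$ there is $t_A>0$ with $A'_t\subseteq A$ for all $0<t\le t_A$. The naive guess ``$A_t:=\bigcap_{0<s\le t}A'_s$'' need not be open or even a neighborhood, so instead I would first pass to interiors: let $U_t=\operatorname{int}_\tau(A'_t)$, which is a definable (the interior operator is definable since $\tau$ has a definable basis) open neighborhood of $x$, and the family $\{U_t : t>0\}$ is still a cofinal basis of neighborhoods of $x$ with the same cofinality property. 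Now set
\[
A_t:=\bigcup_{0<s\le t} U_s .
\]
This is definable, open (a union of open sets), a neighborhood of $x$, and manifestly monotone: $A_s\subseteq A_t$ whenever $0<s<t$. It remains to check $\{A_t\}$ is a basis of neighborhoods of $x$: given a $\tau$-open neighborhood $A$ of $x$, pick $t_A$ as above so that $U_s\subseteq A'_s\subseteq A$ for all $0<s\le t_A$; then $A_{t_A}=\bigcup_{0<s\le t_A}U_s\subseteq A$, and $A_{t_A}$ is a neighborhood of $x$, as required.

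For part (\ref{itm:cor_nested_bases_1}) the same idea works but the index poset is $(\RRR,\lle)$ rather than $\dom$. Apply Theorem~\ref{cor_main_cofinal_bases}(\ref{itm:1st_countability_1}) to get $\{A'_{(s,t)}:s,t>0\}\subseteq\BB$, replace each by its $\tau$-interior $U_{(s,t)}$, and then define
\[
A_{(s,t)}:=\bigcup_{(s',t')\,\lle\,(s,t)} U_{(s',t')}
      =\bigcup_{\substack{0<s'\le s\\ t'\ge t}} U_{(s',t')} .
\]
Again this is definable, open, a neighborhood of $x$, and satisfies $A_{(s',t')}\subseteq A_{(s,t)}$ whenever $(s',t')\lle(s,t)$ directly from the definition together with transitivity of $\lle$. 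The basis property follows exactly as before: for a given open neighborhood $A$ of $x$, choose $s_A,t_A>0$ with $U_{(s,t)}\subseteq A'_{(s,t)}\subseteq A$ for all $(s,t)\lle(s_A,t_A)$; then $A_{(s_A,t_A)}\subseteq A$ and $A_{(s_A,t_A)}$ is a neighborhood of $x$.

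I do not expect a serious obstacle here; the only points that require a little care are (a) confirming that the interior operator of $\tau$ is definable, which is immediate since $\tau$ has a definable basis $\BB$ (so $\operatorname{int}_\tau(S)=\bigcup\{B\in\BB : B\subseteq S\}$ is definable uniformly in $S$), and (b) checking that after replacing $A'_{(s,t)}$ by its interior and then taking the increasing union, the cofinality/basis property survives — which it does because passing to interiors only shrinks sets (so $U_{(s,t)}\subseteq A'_{(s,t)}$ keeps the ``$\subseteq A$'' conclusions) while the union is taken over a $\lle$-downward-closed set of indices, so membership of $x$ and the nesting are automatic. The mild subtlety worth a sentence in the write-up is that $A_{(s,t)}$ contains $x$ because $x\in U_{(s,t)}$ and $(s,t)\lle(s,t)$, and that it is genuinely a $\tau$-neighborhood rather than merely a set containing $x$, which again follows since it is open and contains $x$.
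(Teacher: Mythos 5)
Your proof is correct, but it takes a genuinely different route from the paper's. Where you enlarge, the paper shrinks: the paper's proof of (\ref{itm:cor_nested_bases_2}) sets $A'_t=\bigcap_{t\le t'\le r_0}A_{t'}$, an intersection over all indices between $t$ and a suitable cut-off $r_0$, and must then do real work to show these intersections are still neighborhoods of $x$ --- it introduces the definable function $f(t)=\sup\{s<1 : A_{s'}\subseteq A_t \text{ for every } 0<s'<s\}$, restricts to an interval on which $f$ is continuous so that it attains a minimum on closed subintervals, and, for part (\ref{itm:cor_nested_bases_1}), splits into cases according to whether $\RR$ has a pole, invoking Remark~\ref{remark:type_2_dimensional} to locate a box $(0,r_0]\times[r_1,\infty)$ inside a cell on which a definable choice function is continuous. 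Your union $A_t=\bigcup_{0<s\le t}U_s$ (and its analogue over $\{(s',t') : (s',t')\lle(s,t)\}$) sidesteps all of this: openness and membership of $x$ are automatic, monotonicity is immediate from transitivity of the index preorder, and the ``for all sufficiently $\lle$-small indices'' form of the cofinality property in Theorem~\ref{cor_main_cofinal_bases} is exactly what guarantees $A_{t_A}\subseteq A$, so no continuity or pole/no-pole analysis is needed. The trade-off is minor: your basis elements are unions of members of $\BB$ rather than subsets of single members, but the corollary does not ask for that, and since nestedness together with the basis property recovers the cofinality condition, nothing is lost for later use. Two cosmetic remarks: the sets produced by Theorem~\ref{cor_main_cofinal_bases} already lie in $\BB$ and are therefore $\tau$-open, so your interior step is redundant (though harmless); and your observation that $\operatorname{int}_\tau$ is uniformly definable is correct and is also used, implicitly, in the paper's own proof.
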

\begin{proof}
We prove~(\ref{itm:cor_nested_bases_2}), and then sketch how statement~(\ref{itm:cor_nested_bases_1}) follows in a similar fashion.

\sloppy Let $(X,\tau)$ be a definable topological space and suppose that $\RR$ expands an ordered field. Let $x\in X$ and \mbox{$\A=\{ A_t : t>0\}$} be a cofinal basis of neighbourhoods of $x$ as given by Theorem~\ref{cor_main_cofinal_bases} (\ref{itm:1st_countability_2}). Let $f:(0,\infty)\rightarrow (0,\infty)$ be the function defined as follows. For every $t>0$, $f(t)=\sup\{s<1 : A_{s'}\subseteq A_t \text{ for every } 0<s'<s\}$. By definition of $\A$ and o-minimality, this function is well defined and definable. By o-minimality, let $(0,r_0]$ be an interval on which $f$ is continuous. Note that the family $\{A_t : 0<t\leq r_0\}$ is still a basis of neighborhoods of $x$. By continuity, for every $0<r<r_0$, $f$ reaches its minimum in $[r, r_0]$, so there exists $t_r>0$ such that $A_{t_r}\subseteq \cap_{r\leq t \leq r_0} A_t$. In particular the set $\cap_{r\leq t \leq r_0} A_t$ remains a neighborhood of $x$. For every $t>0$, consider the definable set 
\[
A'_t=\begin{cases}\bigcap_{t\leq t' \leq r_0} A_{t'} & \text{ if } t<r_0; \\ A_{r_0} &\text{ otherwise. }\end{cases}
\]  
The family $\{\textrm{int}_\tau (A'_t) : t>0\}$ is a definable basis of open neighborhoods of $x$ such that, for every $0<s<t$, $\textrm{int}_\tau (A'_s) \subseteq \textrm{int}_\tau (A'_t)$.  

To prove~(\ref{itm:cor_nested_bases_1}) note first that, if $\RR$ expands an ordered field, then we may use the construction in~(\ref{itm:cor_nested_bases_2}) taking $A_{(s,t)}=A_s$. Suppose that $\RR$ does not expand an ordered field (equivalently, by Fact~\ref{fact:edmundo}, $\RR$ does not have a pole). Let $x\in X$ and let $\{A_{(s,t)} : s,t>0 \}$ be a basis of neighborhoods of $x$ as given by Theorem~\ref{cor_main_cofinal_bases}~(\ref{itm:1st_countability_1}). Using definable choice, let $f$ be a definable map on $\RRR$ such that $A_{(s',t')}\subseteq A_{(s,t)}$ for every $s',t'>0$ with $(s',t')\lle f(s,t)$. By o-minimality let $C$ be a cell, cofinal in $(\RRR,\lle)$, where $f$ is continuous. Since $\RR$ does not have a pole there exists some box $(0,r_0]\times [r_1,\infty)$ contained in $C$ (see Remark~\ref{remark:type_2_dimensional}). Following the approach to proving~(\ref{itm:cor_nested_bases_2}), let $A'_{(s,t)}=\bigcap \{A_{(s',t')} : (s,t)\lle (s',t')\lle (r_0,r_1)\}$ if $(s,t)\lle (r_0,r_1)$, and $A'_{(s,t)}=A_{(r_0,r_1)}$ otherwise. Our desired basis is given by $\{ \textrm{int}_\tau(A'_{(s,t)}) \, s,t>0\}$. 
\end{proof}


Recall that in the field of topology a net is a map from a (generally upward) directed set into a topological space for which there is a notion of convergence. The utility of nets lies in that they completely capture the topology in the following sense: the closure of a subset $Y$ of a topological space $(X,\tau)$ is the set of all limit points of nets in $Y$. It follows that nets encode the set of points on which a given function between topological spaces is continuous. 

It seems natural to define a \emph{definable net} to be a definable map from a definable directed set (which we can choose to be downward definable directed set for convenience) into a definable topological space. Indeed, applying definable choice one may show that definable nets defined in this way have properties that are equivalent to those of nets in general topology, when considering definable subsets of definable topological spaces and definable functions between definable topological spaces.  

Theorem~\ref{cor_main_cofinal_bases} suggests that, for all practical purposes, it is enough to consider definable nets to be those whose domain is \mbox{$(\RRR,\lle)$}, if $\RR$ expands an ordered group but does not expand an ordered field, and to simply consider them as definable curves if $\RR$ does expand an ordered field (see Propositions~\ref{curve_selection} and~\ref{prop_cont_lim} below). This is analogous to the way in which, in first countable spaces, for all purposes nets can be taken to be sequences. It is from this last observation, namely that, whenever $\RR$ expands an ordered field, definable curves take the role in definable topological spaces of sequences in first countable spaces, that the motivation to label Theorem~\ref{cor_main_cofinal_bases} \textquotedblleft definable first countability" arose.

This argument of course implicitly relates definable curves in the definable o-minimal setting to sequences in general topology. This identification however is far from new in the o-minimal setting. It is mentioned for example by van den Dries in~\cite{dries98}, p.93, with respect to the euclidean topology in the context of o-minimal expansion of ordered groups, and implicitly noted by the second author in~\cite{thomas12} when introducing the notions of \emph{definable Cauchy curves} and closely related \emph{definable completeness} in the context of certain topological spaces of definable functions where the underlying o-minimal structure expands an ordered field. The same implicit identification can be seen in the definition of definable compactness in~\cite{pet_stein_99}, which is given in terms of convergence of definable curves and applied to euclidean spaces; we will explore this correspondence further in the next section. The motivation for this correspondence lies precisely in that, in the settings being considered by the authors, definable curves display properties similar to those of sequences in the corresponding setting of classical topology. 

We conclude this section with results illustrating that, whenever $\RR$ expands an ordered field, definable curves indeed take the role of definable nets, displaying properties similar to those of sequences in first countable topological spaces. In particular we show that definable topological spaces admit a weak version of definable curve selection (one for which definable curves are not necessarily continuous), and consequently that continuity of definable functions can be characterized in terms of convergence of definable curves.

 
\begin{proposition}[Definable curve selection]\label{curve_selection}
Suppose that $\RR$ expands an ordered field. Let $(X,\tau)$ be a definable topological space and let $Y\subseteq X$ be a definable set. Then $x\in X$ belongs to the closure of $Y$ if and only if there exists a definable curve $\gamma$ in $Y$ that converges to $x$. 
\end{proposition}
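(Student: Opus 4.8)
The plan is to reduce the statement to \emph{definable} downward directedness of the neighbourhood filter and then invoke Corollary~\ref{cor_cofinal_curves_2}. One direction is routine: if $\gamma$ is a definable curve in $Y$ converging to $x$, then every neighbourhood $A$ of $x$ contains $\gamma(t)$ for all sufficiently small $t>0$, and since $\gamma(t)\in Y$, every neighbourhood of $x$ meets $Y$, so $x\in\mathrm{cl}(Y)$; this does not even use the field hypothesis. The substance is the forward direction.

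For the forward direction, assume $x\in\mathrm{cl}(Y)$. Fix a definable basis $\BB=\{A_u:u\in\Omega\}$ for $\tau$ and, as in the proof of Theorem~\ref{cor_main_cofinal_bases}, let $\Omega_x=\{u\in\Omega:x\in A_u\}$, so that $\{A_u:u\in\Omega_x\}$ is a definable basis of neighbourhoods of $x$ and $(\Omega_x,\lleq_\BB)$ is a definable downward directed set (inclusion of basic neighbourhoods, intersections of two basic neighbourhoods again containing a basic neighbourhood of $x$). Since $x\in\mathrm{cl}(Y)$, every $A_u$ with $u\in\Omega_x$ meets $Y$; by definable choice pick a definable map $\sigma:\Omega_x\rightarrow Y$ with $\sigma(u)\in A_u\cap Y$ for all $u\in\Omega_x$. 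Now apply Corollary~\ref{cor_cofinal_curves_2} (using that $\RR$ expands an ordered field) to obtain a definable downward cofinal curve $\delta:(\dom,\leq)\rightarrow(\Omega_x,\lleq_\BB)$, and set $\gamma=\sigma\circ\delta:\dom\rightarrow Y$. This is a definable curve in $Y$. To see it converges to $x$: given any neighbourhood $A$ of $x$, pick $u_0\in\Omega_x$ with $A_{u_0}\subseteq A$; downward cofinality of $\delta$ gives $t_0>0$ such that $t\leq t_0$ implies $\delta(t)\lleq_\BB u_0$, i.e. $A_{\delta(t)}\subseteq A_{u_0}$, whence $\gamma(t)=\sigma(\delta(t))\in A_{\delta(t)}\subseteq A_{u_0}\subseteq A$. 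Thus $\tau\text{-}\lim_{t\rightarrow 0}\gamma=x$.

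I do not anticipate a serious obstacle here; the proof is essentially an assembly of pieces already built. The one point that warrants a line of care is the passage from "$x\in\mathrm{cl}(Y)$" to "$A_u\cap Y\neq\emptyset$ for every $u\in\Omega_x$", which is immediate from $\BB$ being a basis (every $\tau$-open set containing $x$ contains some $A_u$ with $u\in\Omega_x$). A second bookkeeping point is that $\sigma$ and $\delta$ are both definable with parameters, so $\gamma$ is definable; no new parameters beyond those of $\BB$, $Y$, $x$ and the (parameter-free, by definable choice) choice functions are needed. The field hypothesis enters exactly once, through Corollary~\ref{cor_cofinal_curves_2}, which is precisely where a genuine curve (one-dimensional domain $\dom$) rather than a map from $(\RRR,\lle)$ becomes available.
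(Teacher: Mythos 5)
Your proof is correct and takes essentially the same route as the paper: the paper simply cites Theorem~\ref{cor_main_cofinal_bases}(\ref{itm:1st_countability_2}) to get a cofinal basis $\{A_t : t>0\}$ of neighbourhoods of $x$ and then applies definable choice to pick $\gamma(t)\in A_t\cap Y$, whereas you unpack that theorem by applying Corollary~\ref{cor_cofinal_curves_2} directly to $(\Omega_x,\lleq_\BB)$ and compose the choice function with the cofinal curve. The two arguments differ only in this bookkeeping order.
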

\begin{proof}
Let $(X,\tau)$ be a definable topological space and $x\in X$. Let $Y\subseteq X$ be a definable set and suppose that $x\in cl(Y)$. If $\RR$ expands an ordered field there exists, by Theorem~\ref{cor_main_cofinal_bases}, a definable basis of neighborhoods of $x$, $\{A_t : t>0\}$, such that, for every neighborhood $A$ of $x$, $A_t$ is a subset of $A$ for all $t>0$ small enough. 

Given one such definable basis of neighborhoods, consider by definable choice a definable curve $\gamma$ such that, for every $t>0$, $\gamma(t)\in A_t \cap Y$. This curve clearly lies in $Y$ and converges in $(X,\tau)$ to $x$. 

Conversely it follows readily from the definition of curve convergence that, if there exists a curve in $Y$ converging to $x\in X$, then $x\in cl(Y)$.  
\end{proof}

\begin{proposition}\label{prop_cont_lim}
Suppose that $\RR$ expands an ordered field. Let $(X,\tau_X)$ and $(Y,\tau_Y)$ be definable topological spaces. Let $f:(X,\tau_X) \rightarrow (Y,\tau_Y)$ be a definable map. Then, for any $x\in X$, $f$ is continuous at $x$ if and only if, for every definable curve $\gamma$ in $X$, if $\gamma$ $\tau_X$-converges to $x$ then $f \circ \gamma$ $\tau_Y$-converges to $f(x)$. 
\end{proposition}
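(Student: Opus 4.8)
The plan is to prove the two directions of the equivalence separately, using definable curve selection (Proposition~\ref{curve_selection}) as the key tool for the nontrivial direction, since we are in the field case.

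\textbf{The easy direction.} Suppose $f$ is continuous at $x$ and let $\gamma$ be a definable curve in $X$ with $\tau_X$-$\lim_{t\to 0}\gamma = x$. I would show $f\circ\gamma$ $\tau_Y$-converges to $f(x)$ directly from the definitions: given any neighborhood $B$ of $f(x)$ in $(Y,\tau_Y)$, continuity of $f$ at $x$ yields a neighborhood $A$ of $x$ with $f(A)\subseteq B$; since $\gamma$ converges to $x$, there is $t(A)>0$ with $\gamma(t)\in A$ for all $0<t\leq t(A)$, and hence $f(\gamma(t))\in B$ for all such $t$. This requires no o-minimality, just the definitions of continuity and curve convergence.

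\textbf{The hard direction.} For the converse I would argue by contraposition: assume $f$ is \emph{not} continuous at $x$ and produce a definable curve $\gamma$ that $\tau_X$-converges to $x$ but such that $f\circ\gamma$ does not $\tau_Y$-converge to $f(x)$. Failure of continuity at $x$ gives a neighborhood $B$ of $f(x)$ such that $f^{-1}(B)$ is not a neighborhood of $x$; equivalently, $x$ lies in the closure of the definable set $Y := X\setminus f^{-1}(B) = \{y\in X : f(y)\notin B\}$. (This set is definable because $f$, $B$, and $X$ are definable.) By Proposition~\ref{curve_selection} there is a definable curve $\gamma$ in $Y$ converging to $x$ in $(X,\tau_X)$. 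By construction $f(\gamma(t))\notin B$ for every $t>0$, so $f\circ\gamma$ cannot converge to $f(x)$ (the neighborhood $B$ of $f(x)$ is avoided entirely). This contradicts the hypothesis, completing the contrapositive.

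\textbf{Main obstacle.} The only real point requiring care is the reformulation of ``$f$ discontinuous at $x$'' as ``$x\in cl(Y)$'' for the \emph{definable} set $Y$: one must check that the witnessing bad neighborhood $B$ can be taken so that $f^{-1}(B)$ fails the neighborhood condition, and that $x$ then lies in the topological closure of the complement $Y$ within $X$. This is a routine topological manipulation, but it is where definability of $Y$ (hence applicability of Proposition~\ref{curve_selection}) must be verified. Everything else is bookkeeping with the definitions of neighborhood, convergence, and continuity. I would write the proof in this two-paragraph shape: first the contrapositive direction invoking Proposition~\ref{curve_selection}, then the short direct verification of the other implication.
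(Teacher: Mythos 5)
Your proposal is correct and follows essentially the same route as the paper: the forward direction is the same direct unwinding of definitions, and the converse is the same contrapositive argument applying Proposition~\ref{curve_selection} to the definable set $X\setminus f^{-1}(B)$ for a definable bad neighborhood $B$ of $f(x)$ (the paper, like you, notes in passing that $B$ may be taken definable by shrinking to a basic neighborhood).
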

\begin{proof}
Suppose that $f$ is continuous at $x$ and let $\gamma$ be a definable curve in $X$ $\tau_X$-converging to $x\in X$. By continuity of $f$, for every neighborhood $A$ of $f(x)$, $f^{-1}(A)$ is a neighborhood of $x$, so for all $t$ small enough $\gamma(t)\in f^{-1}(A)$. This means that, for all such values of $t$, $(f\circ \gamma)(t)\in A$, so $f\circ \gamma$ $\tau_Y$-converges to $f(x)$.

Conversely, suppose that $f$ is not continuous at $x\in X$. Thus there exists some neighborhood $A$ of $f(x)$, which we may assume is definable, such that $f^{-1}(A)$ is not a neighborhood of $x$. By Proposition~\ref{curve_selection}, there exists a definable curve $\gamma$ in $X\setminus f^{-1}(A)$ that $\tau_X$-converges to $x$. For every $t>0$, it holds that $(f\circ \gamma)(t)\notin A$, and so $f\circ \gamma$ does not $\tau_Y$-converge to $f(x)$. 
\end{proof}

\section{Definable compactness}\label{section_compactness}

In this final section we work with a notion of definable compactness for definable topological spaces that generalises the standard notion in the o-minimal setting~\cite{pet_stein_99}, as well as the more recent definition in~\cite{walsberg15}. In the sense that definable curves can be taken in the o-minimal setting 
to represent sequences, this definition can be understood to interpret sequential compactness, rather than compactness. However, making use of results from previous sections, we'll show how this property relates to what appear to be stronger notions, all of which could be considered reasonable alternative approaches to the idea of compactness in the definable setting.   


\begin{definition} \label{def_definable_curve_comp}
\sloppy Let $(X,\tau)$ be a definable topological space and let \mbox{$\gamma:\dom\rightarrow X$} be a curve. We say that $\gamma$ is \emph{$\tau$-completable} if both $\tau$-$\lim_{t\rightarrow 0} \gamma$ and $\tau$-$\lim_{t\rightarrow \infty} \gamma$ exist.

A definable topological space $(X,\tau)$ is \emph{definably compact} if every definable curve in it is $\tau$-completable.
\end{definition}   
As usual we omit references to the topology when there is no possible ambiguity. Since we are in the ordered group setting, our definition is equivalent to the one that would follow from a less restrictive notion of curve (see~\cite{pet_stein_99}).


Recall that, for convenience, in this paper we are approaching the theory of nets in terms of downward instead of upward directed sets. Consequently a net $\gamma:(\Omega,\lleq)\rightarrow (X,\tau)$ converges to $x\in X$ if, for every neighborhood $A$ of $x$, there exists $u_A\in \Omega$ such that $\gamma(u)\in A$ whenever $u\lleq u_A$. Recall that a subnet (a Kelley subnet) of $\gamma$ is a net of the form $\gamma'= \gamma \circ f$ where $f:(\Omega', \lleq')\rightarrow (\Omega, \lleq)$ is a downward cofinal map. It is definable if all of $(\Omega',\lleq')$, $f$ and $\gamma$ are definable. It is a classical result in topology that a space is compact if and only if every net in it admits a convergent subnet. 

We now apply Corollary~\ref{cor_cofinal_curves_2} to provide characterizations of definable compactness in terms of definable nets and in terms of definable downward directed families of closed sets. The latter part of this characterization ($(\ref{itm:compactness_3})$ in the next corollary) has already been considered in the o-minimal setting. In particular, Johnson~\cite{johnson14} uses it as the definition of definable compactness.

\begin{corollary}\label{characterize_def_compactness}
Let $(X,\tau)$ be a definable topological space. The following are equivalent. 
\begin{enumerate}[(1)]
\item \label{itm:compactness_1} $(X,\tau)$ is definably compact in the sense of Definition \ref{def_definable_curve_comp}, i.e. every definable curve in $X$ is completable.
\item \label{itm:compactness_2} Every definable net in $X$ has a convergent definable subnet. 
\item \label{itm:compactness_3} Every definable downward directed family of closed subsets of $X$ has nonempty intersection. 
\end{enumerate}
\end{corollary}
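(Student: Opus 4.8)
The plan is to prove the cycle of implications $(\ref{itm:compactness_1}) \Rightarrow (\ref{itm:compactness_3}) \Rightarrow (\ref{itm:compactness_2}) \Rightarrow (\ref{itm:compactness_1})$, using Corollary~\ref{cor_cofinal_curves_2} and Theorem~\ref{them_1st_countability} (together with definable choice) as the main tools, and the standard net/subnet machinery from general topology as a template. Throughout, the key translation is the one recorded just before Definition~\ref{def_definable_top_spaces} and in Example~\ref{ex_FIP}: a definable downward directed family of sets corresponds to a definable downward directed index set, and vice versa.

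For $(\ref{itm:compactness_1}) \Rightarrow (\ref{itm:compactness_3})$, let $\SSS = \{S_u : u \in \Omega\}$ be a definable downward directed family of nonempty closed subsets of $X$. Assume first that $\RR$ expands an ordered field (this assumption will be removed below, or rather, circumvented). By Theorem~\ref{them_1st_countability} there is a definable downward cofinal map $\delta : (\RRR, \lle) \to (\Omega, \lleq_\SSS)$; composing with a definable cofinal curve in $(\RRR, \lle)$ coming from Corollary~\ref{cor_cofinal_curves_2} — or rather, applying Corollary~\ref{cor_cofinal_curves_2} directly to $(\Omega, \lleq_\SSS)$ — we get a definable curve $c : \dom \to \Omega$ that is downward cofinal; by definable choice pick $\gamma(t) \in S_{c(t)}$ for each $t$. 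By definable compactness $\gamma$ has a $\tau$-limit $x$ as $t \to 0$. I then claim $x \in \bigcap_{u} S_u$: fix $u_0 \in \Omega$; by cofinality of $c$ there is $t_0$ with $S_{c(t)} \subseteq S_{u_0}$ for all $0 < t < t_0$, so $\gamma(t) \in S_{u_0}$ for such $t$; since $S_{u_0}$ is closed and $\gamma \to x$, we get $x \in S_{u_0}$. Hence the intersection is nonempty. If $\RR$ does not expand an ordered field, run the same argument replacing the curve by the definable downward cofinal map $\delta : (\RRR, \lle) \to (\Omega, \lleq_\SSS)$ from Theorem~\ref{them_1st_countability} and the curve $\gamma$ by a definable net $\gamma : \RRR \to X$ with $\gamma(s,t) \in S_{\delta(s,t)}$; but this requires a notion of completability for such nets — so it is cleaner to instead prove $(\ref{itm:compactness_2}) \Rightarrow (\ref{itm:compactness_3})$ and $(\ref{itm:compactness_1}) \Leftrightarrow (\ref{itm:compactness_2})$ and let $(\ref{itm:compactness_3})$ follow. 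Concretely: for $(\ref{itm:compactness_2}) \Rightarrow (\ref{itm:compactness_3})$, given $\SSS$ as above, form the definable net $\gamma : (\Omega, \lleq_\SSS) \to X$, $\gamma(u) \in S_u$ (definable choice), take a convergent definable subnet $\gamma \circ f$ with limit $x$, where $f : (\Omega', \lleq') \to (\Omega, \lleq_\SSS)$ is definable downward cofinal; then for each $u_0$, downward cofinality gives $v$ with $f(w) \lleq_\SSS u_0$, i.e. $S_{f(w)} \subseteq S_{u_0}$, for all $w \lleq' v$, so the subnet eventually lies in the closed set $S_{u_0}$, whence $x \in S_{u_0}$.

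For $(\ref{itm:compactness_3}) \Rightarrow (\ref{itm:compactness_2})$, I would mimic the classical proof that the finite-intersection-property formulation of compactness implies every net has a convergent subnet. Let $\gamma : (\Omega, \lleq) \to X$ be a definable net. For $u \in \Omega$ set $T_u := \mathrm{cl}_\tau(\{\gamma(v) : v \lleq u\})$; this is a definable family of nonempty closed sets, and it is downward directed because $(\Omega, \lleq)$ is (given $u_1, u_2$, pick $u_3 \lleq u_1, u_2$, so $T_{u_3} \subseteq T_{u_1} \cap T_{u_2}$). By $(\ref{itm:compactness_3})$ there is $x \in \bigcap_u T_u$. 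Now one builds a definable subnet converging to $x$: the classical construction uses the directed set of pairs $(u, A)$ with $A$ a neighbourhood of $x$ and $\gamma(u') \in A$ for some $u' \lleq u$; to keep everything definable I would instead use the \emph{cofinal basis} of neighbourhoods of $x$ from Theorem~\ref{cor_main_cofinal_bases}(\ref{itm:1st_countability_1}), $\{A_{(s,t)} : s,t>0\}$, and take as index set a suitable definable subset of $\Omega \times \RRR$ — say pairs $((s,t), u)$ such that $\gamma(u) \in A_{(s,t)}$ — ordered appropriately, with the subnet map being the projection to $\Omega$; definable choice supplies, for each $(s,t)$ and each $u$, a witness $u' \lleq u$ with $\gamma(u') \in A_{(s,t)}$ (such a witness exists since $x \in T_u$ so $A_{(s,t)}$ meets $\{\gamma(v): v \lleq u\}$). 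Checking that the projection is downward cofinal and that the subnet converges to $x$ is then routine bookkeeping with the defining property of the cofinal basis.

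Finally $(\ref{itm:compactness_2}) \Rightarrow (\ref{itm:compactness_1})$: a definable curve $\gamma : \dom \to X$ is in particular a definable net on $((0,\infty), \geq)$, and also on $((0,\infty), \leq)$. Apply $(\ref{itm:compactness_2})$ to get a convergent definable subnet; one must then promote convergence of a subnet of a \emph{curve} to convergence of the curve itself as $t \to 0$ (resp. $t \to \infty$). Here o-minimality does the work: a definable downward cofinal subnet $\gamma \circ f$, with $f$ definable and mapping into $(0,\infty)$ with the property that $f(w)$ is arbitrarily small for $w$ far out, forces $\liminf$ of the relevant quantities; more precisely, by the o-minimal monotonicity/limit theory a definable curve either converges or diverges as $t \to 0^+$ in a precise cell-decomposition sense, and if a definable subnet converges to $x$ then $\gamma(t)$ must itself enter every $\tau$-neighbourhood of $x$ as $t \to 0$ — because the image of $f$ restricted near the "limit" of $\Omega'$ contains an interval $(0,\varepsilon)$, by Fact~\ref{fact_lex}-style reasoning on one-dimensional cofinal definable sets. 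Doing this for both $t \to 0$ and $t \to \infty$ shows $\gamma$ is completable. The main obstacle I anticipate is precisely the bookkeeping in $(\ref{itm:compactness_3}) \Rightarrow (\ref{itm:compactness_2})$: defining a subnet index set that is simultaneously definable, downward directed, and genuinely delivers downward cofinality, while keeping the subnet map definable — this is where the cofinal-basis result of Theorem~\ref{cor_main_cofinal_bases} is essential, since without it the classical "neighbourhoods of $x$" index set is not obviously definable.
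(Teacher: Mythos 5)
Your treatment of $(\ref{itm:compactness_2})\Rightarrow(\ref{itm:compactness_1})$, $(\ref{itm:compactness_2})\Rightarrow(\ref{itm:compactness_3})$ and $(\ref{itm:compactness_3})\Rightarrow(\ref{itm:compactness_2})$ is essentially sound and matches the paper's route (one simplification: for $(\ref{itm:compactness_3})\Rightarrow(\ref{itm:compactness_2})$ you do not need the cofinal basis of Theorem~\ref{cor_main_cofinal_bases}; the subfamily $\{A_u : u\in\Sigma\}$ of the given definable basis $\BB$ consisting of the basic sets containing $x$ is already a definable downward directed index set, and the paper runs the classical pair construction $\Omega'=\{(u,v):\gamma(u)\in A_v\}$ directly with it). The genuine gap is the implication out of $(\ref{itm:compactness_1})$. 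Your argument for $(\ref{itm:compactness_1})\Rightarrow(\ref{itm:compactness_3})$ uses a definable cofinal \emph{curve} in $(\Omega,\lleq_\SSS)$, which by Corollary~\ref{cor_cofinal_curves_2} exists only when $\RR$ expands an ordered field. You correctly notice this, but your fallback --- ``prove $(\ref{itm:compactness_2})\Rightarrow(\ref{itm:compactness_3})$ and $(\ref{itm:compactness_1})\Leftrightarrow(\ref{itm:compactness_2})$'' --- is never executed: you only ever prove $(\ref{itm:compactness_2})\Rightarrow(\ref{itm:compactness_1})$, and $(\ref{itm:compactness_1})\Rightarrow(\ref{itm:compactness_2})$ is exactly as hard as $(\ref{itm:compactness_1})\Rightarrow(\ref{itm:compactness_3})$. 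So in the general ordered-group case your cycle is broken precisely at the implication the paper identifies as the substantive content of the corollary: that the apparently weaker condition $(\ref{itm:compactness_1})$ implies the other two.

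The missing idea is an iterated-limit argument that converts the two-parameter cofinal map into two successive applications of curve-completability. Given a definable downward directed family $\CC=\{C_u:u\in\Omega\}$ of closed sets, take the cofinal map $\gamma:(\RRR,\lle)\rightarrow(\Omega,\lleq_\CC)$ from Theorem~\ref{them_1st_countability} and, by definable choice, $\mu:\RRR\rightarrow X$ with $\mu(s,t)\in C_{\gamma(s,t)}$. For each fixed $s>0$ the map $t\mapsto\mu(s,t)$ is a definable curve, so by $(\ref{itm:compactness_1})$ it has a limit $x_s$ as $t\rightarrow\infty$; the map $s\mapsto x_s$ is again a definable curve (using definable choice to select limits uniformly if the space is not Hausdorff), so it has a limit $x^*$ as $s\rightarrow 0^+$. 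For fixed $u$, cofinality gives $(s_u,t_u)$ with $\mu(s,t)\in C_u$ for all $(s,t)\lle(s_u,t_u)$; closedness of $C_u$ then puts each $x_s$ with $0<s\leq s_u$ in $C_u$, and closedness again puts $x^*$ in $C_u$. Hence $x^*\in\bigcap\CC$. Without this (or some equivalent device for extracting a limit of a definable $\RRR$-indexed net from curve-completability alone), your proof establishes the corollary only under the additional hypothesis that $\RR$ expands an ordered field.
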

\begin{proof}
We prove $(\ref{itm:compactness_2})\Rightarrow (\ref{itm:compactness_1})$, $(\ref{itm:compactness_3})\Rightarrow (\ref{itm:compactness_2})$ and $(\ref{itm:compactness_1})\Rightarrow (\ref{itm:compactness_3})$, in that order. 

$(\ref{itm:compactness_2})\Rightarrow (\ref{itm:compactness_1})$. Let $\gamma:\dom \rightarrow X$ be a definable curve in $X$. The map $\gamma$ is a definable net from $(\dom,\leq)$ to $(X,\tau)$. 
Suppose that $\gamma$ admits a definable subnet $\gamma'=\gamma\circ f$ with cofinal map $f:(\Omega, \lleq)\rightarrow (\dom ,\leq)$ converging to a point $x\in X$. We show that $\gamma$ converges to $x$ as $t\rightarrow 0$. Let $A$ be a neighborhood of $x$, which we may assume is definable. 
By the definition of net convergence, there exists $u_A\in \Omega$ such that, for every $v\lleq u_A$, $(\gamma \circ f)(v)\in A$. Moreover, since $f$ is cofinal, for every $t>0$ there exists $u_t\in \Omega$ such that $f(v)\leq t$ for every $v\lleq u_t$. Let $u\lleq \{u_A,u_t\}$. Then $(\gamma\circ f)(u)\in A$ and $f(u)<t$. By o-minimality, it follows that the definable set $\{ t>0 : \gamma(t)\in A\}$ contains an interval of the form $(0,t_0)$. So $\lim_{t\rightarrow 0}\gamma(t)=x$. The proof that $\gamma$ converges as $t\rightarrow \infty$ is analogous, only that we interpret $\gamma$ to be a net from $(\dom, \geq)$ to $(X,\tau)$.

$(\ref{itm:compactness_3})\Rightarrow (\ref{itm:compactness_2})$. This mimics the classical proof of the analogous implication in general topology. Let $\gamma:(\Omega,\lleq)\rightarrow (X,\tau)$ be a definable net.
Define
$$C_u=\gamma(\{v\in \Omega: v\lleq u\}),$$
for all $u \in \Omega$.
 Since $(\Omega,\lleq)$ is a downward directed set, the definable family $\CC=\{cl_\tau(C_u) : u\in \Omega\}$ is downward directed. Suppose that (\ref{itm:compactness_3}) holds, in which case there exists $x\in \cap \CC$. We show that there exists a definable subnet that converges to $x$. Let $\A=\{A_u : u\in \Sigma\}$ be a definable basis of neighborhoods of $x$. As usual we understand $\A$ with inclusion as a definable directed set by identifying each set in the family with its index parameters. Set $\Omega'=\{(u,v)\in\Omega\times\Sigma : \gamma(u)\in A_v\}$. Since $x\in\cap \CC$, it follows that

\begin{equation}\label{eqn_compactness_nets} \tag{$\ddagger$}
(\forall u\in \Omega)(\forall A\in \A)(\exists u'\in \Omega)(u'\lleq u \wedge \gamma(u')\in A).
\end{equation}
In particular, for any $v\in \Sigma$, there exists $u\in \Omega$ such that $(u,v)\in\Omega'$. 

Consider the definable preordered set $(\Omega', \lleq')$, where $\lleq'$ is given by 
$$(u',v')\lleq'(u,v) \Leftrightarrow (u'\lleq u) \wedge (A_{v'}\subseteq A_v).$$
For any pair $(u,v), (u',v')\in \Omega'$, there exists $w\in \Omega$ such that $w\lleq \{u, u'\}$ and $v''\in \Sigma$ such that $A_{v''}\subseteq A_{u}\cap A_{u'}$. By~\eqref{eqn_compactness_nets}, there exists $u''\lleq w \lleq \{u,u'\}$ such that $\gamma(u'')\in A_{v''}$. So $(u'',v'')\in \Omega'$ and $(u'',v'')\lleq' \{(u,v), (u',v')\}$, meaning that $(\Omega',\lleq')$ is a directed set. Let $\pi:\Omega'\rightarrow \Omega$ be the corresponding projection and $\gamma'=\gamma\circ \pi$. By~\eqref{eqn_compactness_nets}, $\pi$ is clearly cofinal. We claim that the definable subnet $\gamma':(\Omega',\lleq')\rightarrow (X,\tau)$ converges to $x$. To see this let $A_v$, $v\in \Sigma$, be a basic open neighborhood of $x$. 
By~\eqref{eqn_compactness_nets}, 
let $u\in\Omega$ be such that $(u,v)\in \Omega'$. If $(u',v')\lleq' (u,v)$ then it holds that $\gamma'(u',v')=\gamma(u')\in A_{v'}\subseteq A_{v}$. This proves the claim. 

$(\ref{itm:compactness_1})\Rightarrow (\ref{itm:compactness_3})$. Suppose that $(X,\tau)$ is definably compact and let $\CC=\{C_u : u\in \Omega\}$ be a definable downward directed family of closed sets. Following Remark~\ref{remark_sets_order}(\ref{itm:remark_i}), let $(\Omega, \lleq_\CC)$ be the definable preordered set induced by set inclusion in $\CC$. By definition of downward directed family of sets, $(\Omega,\lleq_\CC)$ is a downward directed set. By Theorem~\ref{them_1st_countability}, there exists a definable cofinal map $\gamma:(\RRR,\lle) \rightarrow (\Omega,\lleq_\CC)$.

By means of definable choice, let $\mu:\RRR \rightarrow X$ be a definable map such that $\mu(s,t)\in C_{\gamma(s,t)}$ for every $s,t>0$. 

Now, for every $s>0$, consider the fiber $\mu_s:R^{>0}\rightarrow X$ where $\mu_s(t)=\mu(s,t)$. By definable compactness, there exists $x_s\in X$ such that $\lim_{t\rightarrow \infty} \mu_s = x_s$. Note that the map $s\mapsto x_s$ for $s>0$ is definable (if the space is not Hausdorff then we may apply definable choice to select one limit uniformly). By definable compactness, let $x^*\in X$ be the limit of $x_s$ as $s\rightarrow 0$. We show that $x^*\in \cap\CC$.

Fix $u\in \Omega$. By cofinality of $\gamma$, there exists $(s_u,t_u)\in\RRR$ such that $\mu(s,t)\in C_u$ for every $(s,t)\lle (s_u,t_u)$. Hence, for any $0<s \leq s_u$, the image of the curve $\mu_s$ for $t\geq t_u$ belongs in $C_u$ and so, since $C_u$ is closed, the limit point $x_s$ belongs in $C_u$. Since this is true for all $0<s\leq s_u$, we conclude that $x^*\in C_u$. So $x^*\in \cap \CC$.   
\end{proof}

Note that (\ref{itm:compactness_3}) in Corollary~\ref{characterize_def_compactness} can be formulated in terms of open sets as follows: every upward directed definable family $\A$ of open sets that is a cover of $X$ admits a finite subcover (equivalently satisfies $X\in \A$). This property is a notion of compactness (it clearly resembles compactness in terms of families of closed sets with the FIP) in the definable setting that has been treated in recent years by Johnson~\cite{johnson14} and in unpublished work by Fornasiero~\cite{fornasiero}. Condition (\ref{itm:compactness_2}) was included in order to highlight how, in analogy to general topology, condition (\ref{itm:compactness_3}) relates to what would be a notion of definable compactness in terms of nets. The equivalence $(\ref{itm:compactness_2})\Leftrightarrow(\ref{itm:compactness_3})$ can be proved by adapting to our setting the classical proof of the similar result in general topology. This can be done relying solely on definable choice, dismissing o-minimality or the existence of ordered group structure. The most important part of the corollary therefore lies in showing that, in the o-minimal group setting, the apparently weaker condition (\ref{itm:compactness_1}) implies (\ref{itm:compactness_2}) and (\ref{itm:compactness_3}). 

Suppose that in our setting we understood our definition of definable compactness (every definable curve is completable) to interpret sequential compactness, and the definition given by Johnson in~\cite{johnson14} (every definable downward directed family of closed sets has nonempty intersection) to interpret compactness. Then we can understand Corollary~\ref{characterize_def_compactness} as asserting that, in the definable realm and context of an underlying o-minimal expansion of an ordered group, sequential compactness and compactness are equivalent.

The first author shows in forthcoming work~\cite{andujar19} that these two notions of definable compactness are not equivalent in general for definable topologies in the o-minimal setting (although one may easily show their equivalence for one-dimensional spaces), and proves their equivalence for Hausdorff spaces.



The following example shows that we may not relax the \textquotedblleft downward directed" condition in Corollary~\ref{characterize_def_compactness} (\ref{itm:compactness_3}) to simply \textquotedblleft having the FIP". 
\begin{example}\label{example_FIP}
By passing to an elementary extension if necessary, we assume that $\RR$ is non-archimedean. Let $r$ denote an infinitesimal element in $\RR$ with respect to another element $1$. 
Then
$$ \mathcal{S} = \{ [0,1]\setminus (x-r, x+r) : x \in [0,1] \} $$
is a definable family of closed sets with FIP but with empty intersection. 

Moreover note that, as observed in the proof of Corollary~\ref{cor_main_result_archimedean_field_case} ($(\ref{itm:cor_arch_field_3})\Rightarrow (\ref{itm:cor_arch_field_1})$), $\mathcal{S}$ cannot be extended to a complete definable type $p(v)$, since in that case the set $\{t: (t \leq v)\in p(v)\}$ would have a supremum $s\in [0,1]$, contradicting $((v\leq s-r)\vee (s+r \leq v))\in p(v)$. 
\end{example}


Finally we show that, if $\RR$ expands an ordered field and $\SSS$ is a definable family of closed sets with the FIP that extends to a complete definable type, then definable compactness implies that $\SSS$ has nonempty intersection. We frame this idea in the next corollary. The proof relies on Theorem~\ref{them_tame_pairs}.

\begin{definition}
Let $(X,\tau)$ be a definable topological space, let $p\in S_X(R)$ be a type and let $x\in X$. We say that $\{x\}$ is a \emph{specialization} (or $x$ is a \emph{limit}) of $p$ if, for every closed definable set $B\subseteq X$, if $B\in p$ then $x\in B$. Equivalently, if $p$ is complete, we have that $A\in p$ for every definable neighborhood $A$ of $x$.  
\end{definition}

\begin{corollary}\label{deflim_compactness}
Suppose that $\RR$ expands an ordered field. Let $(X,\tau)$ be a definable topological space. The following are equivalent. 
\begin{enumerate}[(1)]
\item \label{itm:field_compactness_1} $(X,\tau)$ is definably compact. 
\item \label{itm:field_compactness_2} For every complete definable type $p\in S_X(R)$, there exists $x\in X$ such that $\{x\}$ is a specialization of $p$. 
\end{enumerate}
\end{corollary}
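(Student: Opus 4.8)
The plan is to deduce both implications from Theorem~\ref{them_tame_pairs}, using the characterisation of definable compactness in terms of directed families of closed sets from Corollary~\ref{characterize_def_compactness} for one direction and the curve‑convergence form of definable compactness for the other.

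For $(\ref{itm:field_compactness_2})\Rightarrow(\ref{itm:field_compactness_1})$, by the implication $(\ref{itm:compactness_3})\Rightarrow(\ref{itm:compactness_1})$ of Corollary~\ref{characterize_def_compactness} it suffices to show that every definable downward directed family $\CC=\{C_u : u\in\Omega\}$ of closed subsets of $X$ has nonempty intersection. Such a $\CC$ is a definable family of subsets of $R^n$ with the FIP; since $\CC$ is itself downward directed, condition $(\ref{itm:tame_pairs_2})$ of Theorem~\ref{them_tame_pairs} holds (take $\SSS'=\CC$), and hence $\CC$ can be extended to a complete definable type $p\in S_n(R)$. As each $C_u\subseteq X$ belongs to $p$, so does $X$, i.e. $p\in S_X(R)$, and hypothesis $(\ref{itm:field_compactness_2})$ provides $x\in X$ whose singleton is a specialization of $p$. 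Since each $C_u$ is a closed definable member of $p$, we get $x\in C_u$ for every $u$, so $x\in\bigcap\CC$.

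For $(\ref{itm:field_compactness_1})\Rightarrow(\ref{itm:field_compactness_2})$, let $p\in S_X(R)$ be a complete definable type and fix a definable basis $\BB=\{A_w : w\in\Lambda\}$ of $\tau$. I would form the definable family $\SSS=\{X\setminus A_w : w\in\Lambda,\ X\setminus A_w\in p\}$; its index set is definable because $p$ is a definable type, and $\SSS$ is a definable family of subsets of $X$ with the FIP that can be extended to the complete definable type $p$. (If $\SSS=\emptyset$ then every basic open set lies in $p$, hence so does every definable neighbourhood of every point of $X$, and any $x\in X$ is then a specialization of $p$; so assume $\SSS\neq\emptyset$.) By Theorem~\ref{them_tame_pairs}, $(\ref{itm:tame_pairs_1})\Rightarrow(\ref{itm:tame_pairs_3})$, there is a definable curve $\gamma:\dom\to\bigcup\SSS\subseteq X$ with $\gamma(t)\in X\setminus A_w$ for all sufficiently small $t>0$, for every relevant $w$. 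By definable compactness $\gamma$ is $\tau$-completable, so $x:=\tau\text{-}\lim_{t\to 0}\gamma$ exists in $X$. I then claim $\{x\}$ is a specialization of $p$: for any basic neighbourhood $A_w$ of $x$ we have $\gamma(t)\in A_w$ for small $t$, which is incompatible with $X\setminus A_w\in\SSS$, so $X\setminus A_w\notin p$ and hence $A_w\in p$ by completeness of $p$; since $\BB$ is a basis and $p$ is a filter, every definable neighbourhood of $x$ then belongs to $p$.

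The two genuinely substantial inputs are Theorem~\ref{them_tame_pairs} (hence, ultimately, the completeness of the theory of tame pairs) and Corollary~\ref{characterize_def_compactness}; granted these, the only delicate point is the choice of the auxiliary family $\SSS$ in the second implication. It has to be an honest definable family of sets, so that Theorem~\ref{them_tame_pairs} applies, yet it must control enough of $p$ that the limit of the curve it produces is forced to be a specialization of $p$; taking the complements of the basic open sets lying in $p$ achieves exactly this, since knowing which basic open sets contain $\gamma(t)$ for small $t$ pins down which basic open sets, and hence which definable neighbourhoods, can belong to $p$. Beyond that, only the routine verifications that the relevant index sets are definable and that the degenerate case $\SSS=\emptyset$ is harmless remain.
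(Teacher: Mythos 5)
Your proof is correct. The direction $(\ref{itm:field_compactness_1})\Rightarrow(\ref{itm:field_compactness_2})$ is essentially the paper's own argument: same auxiliary family of complements of basic open sets lying in $p$ (definable because $p$ is), same appeal to the implication $(\ref{itm:tame_pairs_1})\Rightarrow(\ref{itm:tame_pairs_3})$ of Theorem~\ref{them_tame_pairs}, same degenerate case; your final verification that the limit is a specialization argues directly from convergence into $A_w$ clashing with eventual membership in $X\setminus A_w$, whereas the paper first deduces $x\in C_u$ from closedness of $C_u$ --- a cosmetic difference. The direction $(\ref{itm:field_compactness_2})\Rightarrow(\ref{itm:field_compactness_1})$ is where you genuinely diverge. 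The paper works directly with an arbitrary definable curve $\gamma$: by o-minimality its germ at $0^+$ generates a one-dimensional complete definable type, the hypothesised specialization of that type lies in $cl(\gamma[(0,t)])$ for all $t$, and this forces convergence (with a remark that field structure reduces completability to the $t\to 0$ limit). You instead reduce to showing that every definable downward directed family of closed sets has nonempty intersection, invoking Corollary~\ref{characterize_def_compactness} $(\ref{itm:compactness_3})\Rightarrow(\ref{itm:compactness_1})$, and then use Theorem~\ref{them_tame_pairs} $(\ref{itm:tame_pairs_2})\Rightarrow(\ref{itm:tame_pairs_1})$ to extend such a family to a complete definable type in $S_X(R)$ to which the hypothesis applies. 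This is valid (all cited results precede the corollary, and a downward directed definable family omits $\emptyset$ so has the FIP, and a complete type containing some $C_u\subseteq X$ contains $X$), and it neatly sidesteps the two-limit issue; the cost is that it routes through heavier machinery (Theorem~\ref{them_1st_countability} via Corollary~\ref{characterize_def_compactness}, and Corollary~\ref{cor_cofinal_curves_2} via Theorem~\ref{them_tame_pairs}), whereas the paper's argument for this direction is self-contained from o-minimality and the definition of specialization.
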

\begin{proof}
Let $(X,\tau)$, $X\subseteq R^n$, be a definable topological space. We start by showing that $(\ref{itm:field_compactness_2})$ implies $(\ref{itm:field_compactness_1})$. Let $\gamma:\dom \rightarrow X$ be a definable curve, which we assume is injective. Since $\RR$ has field structure, it suffices to show that $\gamma$ converges at $t\rightarrow 0$. Consider the definable type 
\begin{multline*}
p=\{ \phi(v,u): \phi \text{ is a formula with parameters $u$ and $n$ free variables $v$}\, \\ \textrm{ such that } (\exists t>0)\, (\forall\,0<s<t) \, \RR\models \phi(\gamma(s),u)\}.
\end{multline*}
By o-minimality, this is a complete type; in particular it is a one-dimensional complete definable type. Let $x\in X$ be such that $\{x\}$ is a specialization of $p$. For any $t>0$, clearly $x\in cl \gamma[(0,t)]$. Suppose that $\lim_{t\rightarrow 0} \gamma(t) \neq x$, in which case, by o-minimality, there exists some neighborhood $A$ of $x$ and some $t>0$ such that $A\cap \gamma[(0,t)]=\emptyset$, so $x\notin cl \gamma[(0,t)]$, which is a contradiction.

We now prove $(\ref{itm:field_compactness_1})$ implies $(\ref{itm:field_compactness_2})$. Let $p \in S_X(R)$ be a complete definable type, let $\{A_u : u\in \Omega\}$ be a definable basis for $\tau$ and let $\Omega'=\{u\in \Omega: A_u\notin p\}$. Since $p$ is definable, the set $\Omega'$ is definable. If $\Omega'=\emptyset$, then clearly any point $x\in X$ is a specialization of $p$. Suppose that $\Omega'\neq \emptyset$. For each $u\in \Omega'$, let $C_u=X\setminus A_u$. Since $p$ is complete, for every $u\in \Omega$, it holds that $C_u\in p$. By Theorem~\ref{them_tame_pairs}, there exists a definable curve $\gamma:\dom \rightarrow \cup\CC$ satisfying that, for every $u\in\Omega'$, $\gamma(t)\in C_u$ for every $t>0$ small enough. Suppose that $(X,\tau)$ is definably compact and let $x=\lim_{t\rightarrow 0} \gamma$. Then clearly $x\in C_u$ for every $u\in \Omega'$. 

Now let $A$ be a definable neighborhood of $x$. Let $u\in \Omega$ be such that $x\in A_u\subseteq A$. If $A_u\notin p$ then $u\in \Omega'$ and so $x\in C_u$, contradicting that $x\in A_u$. Hence $A_u\in p$, and so $A\in p$. We conclude that $\{x\}$ is a specialization of $p$.
\end{proof}

Let $(X,\tau)$ be a definable topological space and let $\RR^*$ be a tame extension of $\RR$, where $\RR$ expands an ordered field. Let $(X^*,\tau^*)$ be the interpretation of $(X,\tau)$ in $\RR^*$. Given $x\in X$ and $y\in X^*$ it would be reasonable to describe $y$ as being \emph{infinitely close to $x$ (with respect to $\RR$)} if, for every definable neighborhood $A\subseteq X$ of $x$, it holds that $y\in A^*$, where $A^*$ is the interpretation of $A$ in $\RR^*$. In this sense Corollary~\ref{deflim_compactness} states that $(X,\tau)$ is definably compact if and only if, in any tame extension $\RR^*$ of $\RR$, every point in $X^*$ remains infinitely close to a point in $X$.

Finally, by recalling the Marker-Steinhorn Theorem (Fact~\ref{fact:MS}), we derive from Corollary~\ref{deflim_compactness} that, whenever $\RR$ expands the field of reals, compactness is equivalent to definable compactness. 


\begin{corollary}
Suppose that $\RR$ expands the field of reals and let $(X,\tau)$ be a definable topological space. Then $(X,\tau)$ is compact if and only if it is definably compact. 
\end{corollary}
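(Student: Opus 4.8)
The plan is to prove the two implications separately, and to note in passing that only one of them uses the hypothesis that $\RR$ expands the reals. The direction ``compact $\Rightarrow$ definably compact'' is purely formal and already valid in any o-minimal expansion of an ordered group: a compact space has the property that every family of closed sets with the finite intersection property has nonempty intersection, so in particular every definable downward directed family of closed subsets of $X$ does (any downward directed family of sets has the FIP); hence condition $(\ref{itm:compactness_3})$ of Corollary~\ref{characterize_def_compactness} holds, and by that corollary $(X,\tau)$ is definably compact. One could equivalently argue directly: given a definable curve $\gamma$ in $X$, the nested family $\{cl_\tau(\gamma[(0,t)]) : t>0\}$ has nonempty intersection by compactness, and by o-minimality any $x$ in the intersection is a $\tau$-limit of $\gamma$ as $t\rightarrow 0$ --- for each definable neighbourhood $A$ of $x$ the set $\{t>0:\gamma(t)\in A\}$ is definable with $0$ in its closure, hence contains an interval $(0,t_A)$, and such neighbourhoods suffice since $\tau$ has a definable basis --- and similarly as $t\rightarrow\infty$.

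For the substantive direction, ``definably compact $\Rightarrow$ compact'', I would first record that since $\RR$ expands the reals, cuts over $R=\R$ are trivially definable, so every proper elementary extension of $\RR$ is tame, whence by the Marker--Steinhorn Theorem (Fact~\ref{fact:MS}) \emph{every} complete type over $\R$ is definable. Combined with Corollary~\ref{deflim_compactness} this means: $(X,\tau)$ is definably compact if and only if every complete type $p\in S_X(\R)$ admits a specialization $\{x\}$ with $x\in X$. Now suppose for contradiction that $(X,\tau)$ is not compact, and fix a definable basis $\BB=\{A_u:u\in\Omega\}$ of $\tau$. Since every $\tau$-open set is a union of members of $\BB$, non-compactness produces a set $\Lambda\subseteq\Omega$ such that $\{A_u:u\in\Lambda\}$ covers $X$ but has no finite subcover; equivalently the family of closed definable sets $\{X\setminus A_u:u\in\Lambda\}$ has the finite intersection property. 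Then the partial type asserting ``$v\in X$ and $v\notin A_u$ for every $u\in\Lambda$'' is finitely satisfiable in $\RR$, hence consistent, and all of its parameters lie in $\R$; extend it to a complete type $p\in S_X(\R)$. By the reformulation above, $p$ has a specialization $x\in X$, and since each $X\setminus A_u$ with $u\in\Lambda$ is a closed definable set belonging to $p$, we get $x\in X\setminus A_u$ for all $u\in\Lambda$, i.e.\ $x\notin\bigcup_{u\in\Lambda}A_u=X$ --- a contradiction. Hence $(X,\tau)$ is compact.

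The main obstacle --- and essentially the only subtle point --- is that the offending cover extracted from non-compactness is indexed by a set $\Lambda$ that need not be definable, so one cannot invoke directly the results of Sections~\ref{section_after_main_result} and~\ref{section_tame_pairs} on \emph{definable} families with the FIP. The way around this is to work at the level of types: a complete type is just a maximal consistent set of formulas with parameters, so the (non-definable) partial type coming from $\Lambda$ can be completed anyway, and it is then the hypothesis that $\RR$ expands the reals which carries the argument, forcing that completion to be definable (Marker--Steinhorn) and hence, by Corollary~\ref{deflim_compactness}, to have a specialization in $X$. The remaining verifications are routine: the reduction of an arbitrary open cover to a cover by basic open sets from $\BB$, and the remark that the notion of specialization (defined just before Corollary~\ref{deflim_compactness}) applies to each individual closed definable set $X\setminus A_u$ of $p$ with no definability requirement on $\Lambda$ itself.
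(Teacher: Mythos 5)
Your proof is correct and follows essentially the same route as the paper: the easy direction via condition (3) of Corollary~\ref{characterize_def_compactness}, and the substantive direction by reducing a witness of non-compactness to definable closed sets (complements of basic opens), completing to a type in $S_X(\R)$, invoking Marker--Steinhorn (Fact~\ref{fact:MS}) for definability, and applying Corollary~\ref{deflim_compactness} to obtain a specialization. The paper phrases the reduction dually (shrinking an arbitrary FIP family of closed sets to complements of definable basic neighbourhoods rather than refining an open cover), but the argument is the same.
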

\begin{proof}
Let $(X,\tau)$ be a definable topological space. Clearly compactness implies that every definable downward directed family of closed subsets of $X$ has nonempty intersection, so by the implication $(\ref{itm:compactness_3})\Rightarrow (\ref{itm:compactness_1})$ in Corollary~\ref{characterize_def_compactness} (which is dependent on o-minimality alone) compactness implies definable compactness. We prove the converse. 

Suppose that $(X,\tau)$ is definably compact and let $\CC\subseteq \mathcal{P}(X)$ be a family of closed subsets of $X$ with the FIP. We show $\cap \CC\neq \emptyset$. 

For each $x\in X\setminus \cap\CC$, let $A_x$ be a definable open neighborhood of $x$ such that $A_x \cap C=\emptyset$ for some $C\in \CC$. Clearly $\cap \CC = \cap \{X\setminus A_x : x\in X\setminus \cap \CC\}$. So by passing to the latter family if necessary we may assume that $\CC$ contains only definable sets. Let $p$ be a complete type extending $\CC$. By the Marker-Steinhorn Theorem (Fact~\ref{fact:MS}), $p$ is definable so by Corollary~\ref{deflim_compactness} there exists $x\in X$ such that $\{x\}$ is a specialization of $p$. In particular, $x\in C$ for every $C\in \CC$.  
\end{proof}

\bibliography{mybib_directed_sets}
\bibliographystyle{alpha}

\end{document}